\theoremstyle{definition}
\newtheorem{theorem}{Theorem}[section]
\newtheorem{definition}[theorem]{Definition}
\newtheorem{lemma}[theorem]{Lemma}
\newtheorem{corollary}[theorem]{Corollary}
\newtheorem{proposition}[theorem]{Proposition}
\newtheorem{claim}[theorem]{Claim}
\newtheorem{example}[theorem]{Example}
\newtheorem{conjecture}[theorem]{Conjecture}
\newtheorem{remark}[theorem]{Remark}
\newtheorem{acknowledgements}[theorem]{Acknowledgments}
\numberwithin{equation}{section}
\DeclareMathOperator{\Con}{Con}
\DeclareMathOperator{\Ab}{Ab}
\DeclareMathOperator{\End}{End}
\DeclareMathOperator{\Hom}{Hom}
\DeclareMathOperator{\Cg}{Cg}
\DeclareMathOperator{\Pol}{Pol}
\DeclareMathOperator{\id}{id}
\begin{document}

\title{On Supernilpotent Algebras}
\author{Alexander Wires}
\address{School of Economic Mathematics, Southwestern University of Finance and Economics\\ 555 Liutai Avenue, Wenjiang District\\
Chengdu 611130, Sichuan, China}
\email{awires@swufe.edu.cn}
\date{September 25, 2017}

\begin{abstract}
We establish a characterization of supernilpotent Mal'cev algebras which generalizes the affine structure of abelian Mal'cev algebras and the recent characterization of 3-supernilpotent Mal'cev algebras. We then show that for varieties in which the two-generated free algebra is finite: (1) neutrality of the higher commutators is equivalent to congruence meet-semidistributivity, and (2) the class of varieties which interpret a Mal'cev term in every supernilpotent algebra is equivalent to the existence of a weak difference term. We then establish properties of the higher commutator in the aforementioned second class of varieties.  
\end{abstract}

\maketitle


\section{Introduction}
\vspace{0.5cm}

The notion of centralizing elements and centralizing subgroups plays a fundamental role in the structure theory of groups, both finite and infinite. One of the most successful approaches in extending aspects of these ideas to more general algebraic systems has been the generalization of the commutator as a binary operation on congruence lattices of algebras.  The commutator theory for congruence permutable varieties developed by J.D.H Smith \cite{smith}, and latter extended to congruence modular varieties in Herrmann \cite{herrmann}, Hagemann and Herrmann \cite{hhcomm} and Gumm \cite{gumm}, satisfies many of the same useful properties as the traditional commutator when specialized to normal subgroups in the theory of groups. In the setting of congruence modular varieties, the commutator has become a powerful tool in understanding the structure of algebras in these varieties.   

The commutator, also known as the term-condition commutator, can be defined for arbitrary universal algebras, and while it is no longer as well-behaved as in the congruence modular setting, it has proven instrumental in numerous advances; for just two examples, in the classification of finite algebras by tame congruence theory \cite{TCT} and then in the extensions of many of these results to an infinite setting in Kearnes and Szendrei \cite{twocommutator} and Kearnes and Kiss \cite{shape}. This introduction can in no way give an accurate account and due recognition to all the contributors to the general commutator theory - please consult Freese and McKenzie \cite{commod}. 

Underlying many of the most impressive applications of the commutator for general algebras is the fact that it is defined by a certain centralizer relation universally quantified over the polynomials of an algebra which in principle carries information about the identities and behavior of the term operations in relation to particular elements. A fundamental example of this is illustrated by the result in \cite{herrmann} which establishes the affine structure of polynomials of an abelian algebra in a congruence modular variety; namely, an algebra in a congruence modular variety is abelian if and only if it is polynomially equivalent to an abelian group with a unital ring of operators.

The higher commutators defined by Bulatov \cite{bulatov} were introduced as a tool to help differentiate between distinct Mal'cev clones on a finite set. Akin to the binary term-condition commutator, they are defined for arbitrary algebras by a centralizer relation which in general satisfies many but not all of the properties of the binary instance; however, in Aichinger and Mudrinski \cite{supernil} we find that in congruence permutable varieties many of the properties that made the binary commutator such a useful tool continue to hold for the higher commutators. The 3-supernilpotent Mal'cev algebras (abelian in the sense of the ternary commutator) are characterized in Mudrinski \cite{mudrinski} as polynomially equivalent to groups with a special family of unary and binary operators. This is a proper generalization of the affine structure of abelian Mal'cev algebras from \cite{herrmann}. The role of supernilpotence in preventing the dualizability of finite Mal'cev algebras is investigated in Dentz and Mayr \cite{dentzmayr}, and in Mayr \cite{mayr} supernilpotence is an essential tool in establishing implicit descriptions of the polynomials of certain classes of finite local rings and groups.

In Opr\v{s}al \cite{oprsal}, the higher commutators for Mal'cev algebras are encoded by a certain subpower with applications to supernilpotent clones. Recently, in Moorhead \cite{moorhead} some of the fundamental properties of the higher commutator for congruence permutable varieties are extended to the congruence modular setting.

The main purpose of the present article is to provide a characterization of supernilpotent Mal'cev algebras which generalizes the affine structure of abelian Mal'cev algebras and the special class of expanded groups described in \cite{mudrinski}. The argument utilizes two principle ideas: the use of absorbing polynomials to mimic aspects of the commutator calculus in groups and the representation of arbitrary polynomials in loops by absorbing polynomials. In \cite{supernil}, we learn the utility of absorbing polynomials in proving many of the higher commutator properties in Mal'cev algebras; in particular, absorbing polynomials provide a convenient generating set of the higher commutators of congruences \cite[Lem.6.9]{supernil}. In \cite{mudrinski}, we see how absorbing polynomials are used to represent the polynomials of certain expanded groups and determine ``distributivity'' of smaller arity absorbing polynomials.

For an arbitrary variety, congruence meet-semidistributivity is equivalent to the neutrality of the binary term-condition commutator \cite{twocommutator}. We conjecture that congruence meet-semidistributivity is equivalent to the condition that all the higher commutators are neutral. This is established for congruence distributive varieties (Theorem \ref{thm:cdneutral}) and for arbitrary varieties under the assumption that the two-generated free algebra is finite (Theorem \ref{thm:neutral}). Under this same finiteness assumption, we also show the the class of varieties in which the supernilpotent algebras are polynomially equivalent to n-type loops (defined in Section \ref{sec:charct}) is equivalent to the class of varieties with a weak difference term (Theorem \ref{thm:ndifference}). We again conjecture that the finiteness assumption is unnecessary, and as evidence for the general conjecture, we establish the difficult part of the characterization for the class of congruence modular varieties (Theorem \ref{thm:cmdiff}).

In section \ref{sec:prelim}, we state the definitions and properties of the higher centralizers and commutators. We also record the characterizations of 3-supernilpotent Mal'cev algebras from \cite{mudrinski} and a central fact about nilpotent algebras in Mal'cev varieties which we will utilize. In section \ref{sec:charct}, we state and prove Theorem \ref{thm:nabelian} which is the characterization of arbitrary supernilpotent Mal'cev algebras. We also show how the characterizations for abelian and 3-supernilpotent Mal'cev algebras can be directly derived from Theorem \ref{thm:nabelian}. Section \ref{sec:neutral} contains the results related to congruence meet-semidistributivity and varieties with a weak difference term. In the final Section \ref{sec:weakdiffcomm}, we look toward future applications in varieties with weak m-difference terms by extending to the more general setting restricted instances of commutator properties from Mal'cev varieties (Theorem \ref{thm:ftermprop} and Theorem \ref{thm:nweak}).

\section{All and Sundry}\label{sec:prelim}

We record some facts which we will require in the next section. The definitions and notations for algebras are standard and can be found in Bergman \cite{bergman} and Burris and Sankappanavar \cite{univ}. For definitions and notation concerning the commutator consult \cite{commod}.

The natural numbers are $\mathds{N} = \{1,2,3, \ldots \}$ so $0$ is not included (I apologize to half the readers). For $n \in \mathds{N}$, let $[n]=\{1,\ldots,n\}$ and $[n]^{(k)} = \{s \subseteq [n]: |s|=k \}$. A partition of $n \in \mathds{N}$ is a sum $n=n_1 + \cdots + n_k$ with $n_i \in \mathds{N}$; in particular, each $n_i \geq 1$. Given a map $f: A^n \rightarrow A$ and $a \in A$, we write $f=\hat{a}$ to mean $f$ is the constant map which maps $A^n$ to $a$. Boldface font will be used to denote a vector or tuple $\textbf{a}=(a_1,\ldots,a_m) \in A^m$. For a binary relation $R \subseteq A \times A$, we will interchangeably use the equivalent notations $(a,b) \in R$, $a \ R \ b$ and $a \equiv_{R} b$ for membership in the relation. When $R$ is an equivalence relation and $a \in A$, then $a/R$ will denote the equivalence class containing $a$. For an algebra $\textbf{A}$, the largest congruence is the total relation $1_A \in \Con \textbf{A}$ and the smallest congruence is the identity relation $0_A \in \Con \textbf{A}$. We will sometimes omit the subscript when appropriate and just write $1$ and $0$ for the total and identity congruences, respectively.
 
For an algebra $\textbf{A}$ and congruence $\theta \in \Con A$, the \textit{lower central series} is defined by 
\[
(\theta]^{0}=\theta,\ \ \ (\theta]^{n+1} = [\theta,(\theta]^{n}]
\]
and the \textit{derived series} is 
\[
[\theta]^{0}=\theta,\ \ \ [\theta]^{n+1} = [\theta,[\theta]^{n}].
\]
A congruence $\theta \in \Con \textbf{A}$ is nilpotent of class n (equivalently, n-step nilpotent or n-nilpotent) if $(\theta]^n=0_A$, and solvable of class n if $[\theta]^n=0_A$. $\textbf{A}$ is nilpotent (solvable) if $1_A$ is nilpotent (solvable).

We sometimes use a convenient notation for evaluating operations on particular substitutions. Suppose we have an operation $f: A^n \rightarrow A$ and vector $\textbf{a} \in A^n$. Then for any $C \subseteq [n]$ and tuple $\textbf{b}=(b_1,\ldots,b_{|C|}) \in A^{|C|}$, $f(\textbf{a})_{C}[\textbf{b}]$ denotes the evaluation of $f$ where the elements of $\textbf{b}$ are substituted into the corresponding coordinates of $\textbf{a}$ specified by $C$. For example, we have 
\[
f(a_1,a_2,a_3,a_4,a_5,a_6,a_7)_{\{2,4,7\}}[b_1,b_2,b_3]=f(a_1,b_1,a_3,b_2,a_5,a_6,b_3).
\] 
For a specified list of variables $\{x_1,\ldots,x_n\}$, we will sometimes write a term $t(\bar{x}_S)$ for a subset $S \subseteq [n]$ to specify that the variables in $t$ are precisely $x_i$ for $i \in S$. This notation arises when discussing different subterms of a term.

A \textit{quasigroup} is an algebra $\textbf{A} = \left\langle A, \cdot, \backslash, / \right\rangle$ satisfying the identities
\begin{eqnarray*}
x \backslash (x \cdot y) &=& y \ \ \ \ (x \cdot y) / y  = x \\
x \cdot (x \backslash y) &=& y \ \ \ \ (x / y) \cdot y = x.
\end{eqnarray*}
In a quasigroup we always have the additional identities $(x / y) \backslash x = y = x / (y / x)$.
A \textit{loop} is a quasigroup which has a constant $0$ in the signature satisfying $0 \cdot x = x \cdot 0 = 0$. In a loop, we have the additional identities
\begin{eqnarray*}
0 \backslash x &=& x \ \ \ \ \ \ x \backslash x = 0 \\
x / 0 &=& x \ \ \ \ \ \ x / x = 0.
\end{eqnarray*}
Note any quasigroup has a Mal'cev term given by $m(x,y,z):= (x / (y \backslash y)) \cdot (y \backslash z)$.

For a fixed variety $\mathcal V$ in the signature $\sigma$, we say $\textbf{B} = \left\langle B, \sigma^{B}, F \right\rangle$ is an expanded $\mathcal V$-algebra if the reduct $\left\langle B, \sigma^{B} \right\rangle \in \mathcal V$ and $F$ is a set of operations.

\subsection{Nilpotent Mal'cev Algebras}\label{sec:nilpotent}

Let $\textbf{A}$ be an algebra in a Mal'cev variety. If $\textbf{A}$ is nilpotent, then $\textbf{A}$ is polynomially equivalent to an expanded loop \cite[Ch.7]{commod}. Let us describe in more detail how this is accomplished. Let $m(xyz)$ denote a Mal'cev term for $\textbf{A}$ and suppose $\textbf{A}$ is nilpotent of class $n$. Define a sequence of terms $f_n, g_n$ by
\begin{eqnarray*}
f_0(x,y,z) &:=& x \\
f_{n+1}(x,y,z) &:=& m(y,m(y,x,m(f_n(x,y,z),y,z)),f_n(x,y,z))
\end{eqnarray*}
and
\begin{eqnarray*}
g_0(x,y,z) &:=& z \\
g_{n+1}(x,y,z) &:=& m(g_n(x,y,z),m(m(x,y,g_{n}(x,y,z)),z,y),y). 
\end{eqnarray*}
The lower central series and the terms $f_n,g_n$ are related in $\textbf{A}$ in the following manner \cite[Lem.7.3]{commod}: for any $k \in \mathds{N}$ 
\begin{eqnarray*}
m(f_k(x,y,z),y,z) &(1]^k& x \ \ (1]^k \ \ m(z,y,g_k(z,y,x))\\
f_k(m(x,y,z),y,z) &(1]^k& x \ \ (1]^k \ \ g_k(z,y,m(z,y,x)).
\end{eqnarray*}
Since $\textbf{A}$ is nilpotent of class $n$, $(1]^n=0_A$; thus, for all $b,c \in A$ the maps $x \mapsto m(x,b,c)$ and $x \mapsto f_n(x,b,c)$ are inverses of each other, and the maps $x \mapsto m(c,b,x)$ and $x \mapsto g_n(c,b,x)$ are inverses of each other. It also follows that 
\begin{eqnarray*}
f_{n}(y,x,x) &=& y \ \ \ \ g_{n}(x,x,y) = y \\
f_{n}(x,y,x) &=& y \ \ \ \ g_{n}(x,y,x) = y
\end{eqnarray*}
in $\textbf{A}$. We can now define the polynomial loop reduct. Fix an element $0 \in A$ and define 
\[
x \cdot y := m(x,0,y) \ \ \ x\backslash y := g_{n}(x,0,y) \ \ \ x/y := f_{n}(x,0,y).
\]
Then $\textbf{A}$ is polynomially equivalent to the loop $L_\textbf{A} = \left\langle A, \cdot, \backslash , /, 0, F \right\rangle$ where we gather the remaining operations into $F$. $L_\textbf{A}$ is nilpotent of the same degree as $\textbf{A}$. In Section \ref{sec:charct}, we shall see how supernilpotence of $\textbf{A}$ is related to characterizations of which operations can be used for $F$ in order to generate $\Pol \textbf{A}$.

\subsection{Higher Commutators}

Let us now recall the definition of the $n$-commutator defined in Bulatov \cite{bulatov}.

\begin{definition}
Let $\textbf{A}$ be an algebra, and $\alpha_1,\ldots,\alpha_{n-1}, \beta, \delta \in \Con \textbf{A}$. We say that $\alpha_1,\ldots,\alpha_{n-1}$ centralizes $\beta$ modulo $\delta$, and write $\mathrm{C}(\alpha_1,\ldots,\alpha_{n-1},\beta;\delta)$ if for all polynomials $f(\textbf{x}_1,\ldots,\textbf{x}_{n-1},\textbf{y})$ and vectors $\textbf{a}_1, \textbf{b}_1,\ldots, \textbf{a}_{n-1}, \textbf{b}_{n-1}, \textbf{c}, \textbf{d}$ satisfying
\begin{enumerate}

\item $\textbf{a}_i \equiv_{\alpha_i} \textbf{b}_i$ for $i=1,\ldots,n-1$,

\item $\textbf{c} \equiv_{\beta} \textbf{d}$, and

\item $f(\textbf{z}_1,\ldots,\textbf{z}_{n-1},\textbf{c}) \equiv_{\delta} f(\textbf{z}_1,\ldots,\textbf{z}_{n-1},\textbf{d})$ for every tuple $(\textbf{z}_1,\ldots,\textbf{z}_{n-1}) \in \left\{\textbf{a}_1, \textbf{b}_1\right\} \times \cdots \times \left\{\textbf{a}_{n-1},\textbf{b}_{n-1}\right\} \backslash \left\{(\textbf{b}_1,\ldots,\textbf{b}_{n-1})\right\}$,

\end{enumerate}
we have $f(\textbf{b}_1,\ldots,\textbf{b}_{n-1},\textbf{c}) \equiv_{\delta} f(\textbf{b}_1,\ldots,\textbf{b}_{n-1},\textbf{d})$.
\end{definition}

In this case we write $\textbf{A} \vDash \mathrm{C}(\alpha_1,\ldots,\alpha_{n-1},\beta;\delta)$. The following properties follow directly by application of the definition of the centralizer relation.

\begin{lemma}\label{lem:cen}
We have the following properties of the centralizer relation in an algebra $\textbf{A}$:
\begin{enumerate}

	\item $\textbf{A} \vDash \left(\bigwedge_{t \in I} \mathrm{C}(\alpha_1,\ldots,\alpha_n;\delta_t)\right) \rightarrow \mathrm{C}(\alpha_1,\ldots,\alpha_n;\bigwedge_{t \in I} \delta_t)$
	
	\item If each $\alpha_i \leq \beta_i$, then $\textbf{A} \vDash \mathrm{C}(\beta_1,\ldots,\beta_n;\delta) \rightarrow \mathrm{C}(\alpha_1,\ldots,\alpha_n;\delta)$.
	
	\item For any permutation $\sigma: [n-1] \rightarrow [n-1]$, 
	\[
	\textbf{A} \vDash \mathrm{C}(\alpha_1,\ldots,\alpha_{n-1},\alpha_{n};\delta) \,\,\, \mathrm{iff} \,\,\, \textbf{A} \vDash \mathrm{C}(\alpha_{\sigma(1)},\ldots,\alpha_{\sigma(n-1)},\alpha_{n};\delta).
	\]
	
	\item For $\gamma \leq \alpha_1 \wedge \cdots \wedge \alpha_n \wedge \delta$, 
	\[
	\textbf{A} \vDash \mathrm{C}(\alpha_1,\ldots,\alpha_n;\delta) \,\,\, \mathrm{iff} \,\,\, \textbf{A}/\gamma \vDash \mathrm{C}(\alpha_1/\gamma,\ldots,\alpha_n/\gamma,\beta/\gamma;\delta/\gamma).
	\]
	
	\item For $i < n$, $\textbf{A} \vDash \mathrm{C}(\alpha_1,\ldots,\alpha_{i-1},\alpha_{i+1},\ldots,\alpha_n;\delta) \rightarrow \mathrm{C}(\alpha_1,\ldots,\alpha_n;\delta)$.
	
	\item If $\alpha_n \wedge (\beta \circ (\delta \wedge \alpha_n) \circ \beta) \subseteq \delta$, then for $k < n$ we have
	\[
	\textbf{A} \vDash \mathrm{C}(\alpha_1,\ldots,\alpha_{k-1},\beta,\alpha_{k+1},\ldots,\alpha_n;\delta);
	\]
if in addition $\alpha_n \wedge (\gamma \circ (\delta \wedge \alpha_n) \circ \gamma) \subseteq \delta$, then we have 
\[
\textbf{A} \vDash \mathrm{C}(\alpha_1,\ldots,\alpha_{k-1},\beta \vee \gamma,\alpha_{k+1},\ldots,\alpha_n;\delta).
\]

\end{enumerate}
\end{lemma}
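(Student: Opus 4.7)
The plan is to prove each item by directly unpacking the centralizer definition. Parts (1)--(4) are essentially formal bookkeeping, while (5) and (6) require short combinatorial arguments. For (1), I would fix a polynomial $f$ and data $\textbf{a}_i,\textbf{b}_i,\textbf{c},\textbf{d}$ satisfying premises (1)--(3) for $\delta := \bigwedge_{t \in I}\delta_t$. Each hypothesis $\mathrm{C}(\alpha_1,\ldots,\alpha_n;\delta_t)$ applies since premise (3) modulo $\delta$ implies it modulo every $\delta_t$, and taking the meet of the resulting conclusions yields the desired congruence modulo $\delta$. Part (2) is immediate from the fact that $\alpha_i \leq \beta_i$ makes every $\alpha_i$-related pair also $\beta_i$-related, so the $\beta$-side hypotheses are automatic. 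Part (3) is a reindexing: apply the centralizer to the permuted polynomial $f^{\sigma}(\textbf{x}_1,\ldots,\textbf{x}_{n-1},\textbf{y}) := f(\textbf{x}_{\sigma^{-1}(1)},\ldots,\textbf{x}_{\sigma^{-1}(n-1)},\textbf{y})$. Part (4) follows from the standard correspondence between polynomials of $\textbf{A}$ and polynomials of $\textbf{A}/\gamma$; the assumption $\gamma \leq \delta$ ensures that congruence modulo $\delta$ passes cleanly across the quotient map in either direction.

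For (5), I would restrict the given $n$-block polynomial $f$ by fixing $\textbf{x}_i := \textbf{b}_i$ to obtain an $(n-1)$-block polynomial $g$. The key observation is that every tuple $(\textbf{z}_j)_{j \neq i} \in \prod_{j \neq i}\{\textbf{a}_j,\textbf{b}_j\} \setminus \{(\textbf{b}_j)_{j \neq i}\}$ must contain some $\textbf{z}_j = \textbf{a}_j$ with $j \neq i$, so inserting $\textbf{b}_i$ in slot $i$ produces a tuple in the exclusion set for $f$. Thus premise (3) for $f$ supplies premise (3) for $g$, and applying the assumed $(n-1)$-block centralizer to $g$ produces exactly the equation required of $f$.

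The main obstacle is part (6), which is the only step requiring a genuine idea. I would establish the first claim by a $\beta$-shift. Given $\textbf{a}_k \equiv_\beta \textbf{b}_k$ and a polynomial $f$ together with standard data satisfying premises (1)--(3), I would consider the path
\[
f(\ldots,\textbf{b}_k,\ldots,\textbf{c}) \;\beta\; f(\ldots,\textbf{a}_k,\ldots,\textbf{c}) \;(\delta \wedge \alpha_n)\; f(\ldots,\textbf{a}_k,\ldots,\textbf{d}) \;\beta\; f(\ldots,\textbf{b}_k,\ldots,\textbf{d}).
\]
The outer links are $\beta$-related since only coordinate-block $k$ changes; the middle pair lies in $\delta$ by premise (3), because the tuple $(\textbf{b}_1,\ldots,\textbf{a}_k,\ldots,\textbf{b}_{n-1})$ avoids the excluded $(\textbf{b}_1,\ldots,\textbf{b}_{n-1})$, and lies in $\alpha_n$ since only $\textbf{y}$-coordinates change. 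The endpoint pair thus lies in $\alpha_n \wedge (\beta \circ (\delta \wedge \alpha_n) \circ \beta) \subseteq \delta$ by hypothesis. For the extension to $\beta \vee \gamma$, I would express $\textbf{a}_k \equiv_{\beta \vee \gamma} \textbf{b}_k$ as a finite coordinatewise alternating chain of $\beta$- and $\gamma$-steps and iterate the argument: each intermediate link stays in $\delta \wedge \alpha_n$ by induction, and the twin hypothesis for $\gamma$ closes the analogous step wherever the chain uses $\gamma$, so the composed pair of endpoints remains in $\delta$.
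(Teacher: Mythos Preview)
Your proposal is correct and is precisely the kind of argument the paper has in mind: the paper gives no proof at all, merely remarking that the properties ``follow directly by application of the definition of the centralizer relation,'' and your unpacking of each item does exactly that. In particular, your $\beta$-shift path for (6) and the alternating-chain induction for $\beta\vee\gamma$ are the standard way to carry this out.
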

\vspace{0.1cm}

Property (1) in Lemma \ref{lem:cen} yields the following definition.

\begin{definition}
For an algebra $\textbf{A}$, we let $[\alpha_1,\ldots,\alpha_{n}]$ be the smallest congruence $\delta$ such that $\textbf{A} \vDash \mathrm{C}(\alpha_1,\ldots,\alpha_n;\delta)$.
\end{definition}

For any arbitrary algebra $\textbf{A}$ we have the the following properties:
\vspace{0.1cm}

(HC1) $[\alpha_1,\ldots,\alpha_n] \leq \bigwedge_{1 \leq i \leq n} \alpha_i$;

(HC2) If each $\alpha_i \leq \beta_i$, then $[\alpha_1,\ldots,\alpha_n] \leq [\beta_1,\ldots,\beta_n]$;

(HC3) $[\alpha_1,\ldots,\alpha_n] \leq [\alpha_2,\ldots,\alpha_n]$.
\vspace{0.1cm}

If $\textbf{A}$ generates a Mal'cev variety, then the following hold \cite{supernil}:
\vspace{0.1cm}

(HC4) For any permutation $\sigma: [n] \rightarrow [n]$, 
\[
[\alpha_1,\ldots,\alpha_n]=[\alpha_{\sigma(1)},\ldots,\alpha_{\sigma(n-1)}];
\]

(HC5) $[\alpha_1,\ldots,\alpha_n] \leq \eta$ if and only if $\textbf{A} \vDash \mathrm{C}(\alpha_1,\ldots,\alpha_n;\eta)$;

(HC6) If $\eta \leq \alpha_1 \wedge \cdots \wedge \alpha_n$, then in $\textbf{A}/\eta$ we have 
\[
[\alpha_1/\eta,\ldots,\alpha_n/\eta] = \left( [\alpha_1,\ldots,\alpha_n] \vee \eta \right) /\eta;
\]

(HC7) For any $j \in [n]$ and collection $\{\beta_i\}_{i \in I} \subseteq \Con \textbf{A}$, 
\[
[\alpha_1,\ldots,\alpha_{j-1},\bigvee_{i \in I} \beta_{i}, \alpha_{j+1},\ldots,\alpha_n] = \bigvee_{i \in I} [\alpha_1,\ldots,\alpha_{j-1},\beta_{i}, \alpha_{j+1},\ldots,\alpha_n];
\]

(HC8) For any $1 < i < n$, $[\alpha_1,\ldots,\alpha_{i-1},[\alpha_i,\ldots,\alpha_n]] \leq [\alpha_1,\ldots,\alpha_n]$.
\vspace{0.2cm}

For the higher commutator, it is useful to adopt the notation $[\theta,\ldots,\theta]_n = [\underbrace{\theta,\ldots,\theta}_{n}]$ for $\theta \in \Con \textbf{A}$. A congruence $\alpha \in \Con \textbf{A}$ is $n$-\textit{supernilpotent} if $[\alpha,\ldots,\alpha]_n=0_A$. The algebra $\textbf{A}$ is said to be $n$-supernilpotent if $1_A \in \Con \textbf{A}$ is $n$-supernilpotent. The terminology is justified by the following observation: if $\textbf{A}$ generates a Mal'cev variety, then by repeated applications of (HC8) for any $\theta \in \Con \textbf{A}$ we have
\begin{eqnarray*}
[\theta,\ldots,\theta]_n \geq [\theta,[\theta,\ldots,\theta]_{n-1}] &\geq& [\theta,[\theta,[\theta,\ldots,\theta]_{n-2}]] \\
&\vdots& \\
&\geq& [\theta,[\theta, \cdots[\theta,\theta] \cdots ]].
\end{eqnarray*}
with n-1 nested binary commutators in the last term. If $\textbf{A}$ is $n$-supernilpotent, then $\textbf{A}$ is nilpotent of class n-1.

The characterization of $3$-supernilpotent Mal'cev algebras we wish to extend is the following:

\begin{theorem}(\cite{mudrinski})\label{thm:mudrinski}
Suppose $\textbf{A}$ generates a Mal'cev variety. The following are equivalent:
\begin{enumerate}

	\item $\textbf{A}$ is 3-supernilpotent ($[1_A,1_A,1_A]=0_A$).
	
	\item $\textbf{A}$ is polynomially equivalent to an expanded group $\textbf{V} = \left\langle A, +, -, 0, F \right\rangle$ such that
	
		\begin{enumerate}
		
			\item $F$ is the set of at most binary absorbing polynomials of $\textbf{V}$,
			
			\item Every absorbing binary polynomial is distributive with respect to $+$ in both arguments, and   
			
			\item $\textbf{V}$ is 2-nilpotent ($[1_V,[1_V,1_V]]=0_V$).
		
		\end{enumerate}

\end{enumerate}
\end{theorem}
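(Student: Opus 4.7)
The plan is to prove both directions using the structure theory recalled in Section \ref{sec:prelim}. For $(1) \Rightarrow (2)$, I first observe that 3-supernilpotence forces 2-nilpotence via the nested-commutator inequality noted just before the theorem statement: $[1_A,[1_A,1_A]] \leq [1_A,1_A,1_A] = 0_A$. Since $\textbf{A}$ is then a 2-nilpotent algebra in a Mal'cev variety, Section \ref{sec:nilpotent} produces a polynomially equivalent expanded loop $L_\textbf{A}$. I would then invoke the standard fact that 2-nilpotence forces the loop multiplication to be associative (the associator and commutator of translations land in $[1,[1,1]] = 0$), and thereby obtain a polynomially equivalent expanded group $\textbf{V} = \langle A, +, -, 0, F \rangle$.

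Next, I would set $F$ to be the collection of all at-most-binary polynomials of $\textbf{V}$ that vanish when any argument is set to $0$. The crucial structural lemma is that every polynomial of $\textbf{V}$ admits a normal form as a $+$-sum of a constant, unary absorbing polynomials, and binary absorbing polynomials applied to pairs of its variables. The decomposition proceeds by induction on arity, peeling off the values at successive zero substitutions; every ``genuinely ternary or higher'' residual is controlled by the commutator $[1_V,[1_V,1_V]]$, which vanishes by 2-nilpotence. This yields clause (a). For clause (b), given a binary absorbing $p \in F$, I would apply the centralizer relation $\mathrm{C}(1_A,1_A,1_A;0_A)$ to
\[
g(x_1,x_2,y) := p(x_1+x_2,y) - p(x_1,y) - p(x_2,y),
\]
using substitutions $a_i = 0$, $b_i$ arbitrary, $d = 0$, $c$ arbitrary; the absorbing property makes every side condition of the centralizer definition hold automatically (both sides collapse to $0$), and the conclusion forces $g \equiv 0$, i.e.\ distributivity of $p$ in its first coordinate. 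Distributivity in the second coordinate is symmetric. Clause (c) is immediate from the first paragraph.

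For the converse $(2) \Rightarrow (1)$, I would verify $\mathrm{C}(1_V,1_V,1_V;0_V)$ directly. Using (a), every polynomial $f$ of $\textbf{V}$ becomes a $+$-sum of summands of arity at most two drawn from $F$; substituting three vectors of arguments, one tracks the $+$-decomposition of $f(b_1,b_2,c) - f(b_1,b_2,d)$. Summands whose variables come from fewer than three of the three argument blocks are handled directly by the centralizer hypotheses (clauses 1--3 of the definition). Summands genuinely involving all three blocks reduce, via the distributivity in (b), to nested binary expressions that lie inside $[1_V,[1_V,1_V]] = 0_V$. Hence $[1_V,1_V,1_V] = 0_V$, and polynomial equivalence transfers the vanishing back to $\textbf{A}$.

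The main obstacle is the normal-form lemma in the second paragraph: expressing arbitrary polynomials of a 2-nilpotent expanded group as $+$-sums of absorbing polynomials of arity at most two. This is where 2-nilpotence does the real work and where the choice of $F$ is essentially forced. Once this representation is in hand, both implications reduce to careful but mechanical manipulations of the centralizer definition against the absorbing normal form, aided by the higher commutator properties (HC1)--(HC8) already recorded.
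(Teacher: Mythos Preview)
The paper does not give a standalone proof of Theorem~\ref{thm:mudrinski}; it is quoted from \cite{mudrinski} and then recovered in Example~\ref{ex:3abel} as the $n=3$ instance of Theorem~\ref{thm:nabelian} together with Lemma~\ref{lem:assoc}. Your outline has the same overall shape (pass to a loop, build a normal form from absorbing polynomials, read off distributivity), but there is a recurring conceptual slip that damages several steps.

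The slip is this: a $k$-ary $0$-absorbing polynomial takes values in the \emph{higher} commutator $[1,\ldots,1]_k$ (Lemma~\ref{lem:absorb}), not in the nested binary commutator $(1]^{k-1}$. You invoke $[1,[1,1]]=0$ in three places where what is actually needed is $[1,1,1]=0$: (i) for associativity of the loop product --- the associator $a_1$ of Lemma~\ref{lem:assoc} lies in $\Ab^3_0$ and hence in $0/[1,1,1]$, not in $0/[1,[1,1]]$; (ii) to kill the ``genuinely ternary or higher'' residuals in your normal-form decomposition; and (iii) in your $(2)\Rightarrow(1)$. In direction $(1)\Rightarrow(2)$ this is a misattribution rather than a fatal error, since you do have $[1,1,1]=0$ available and can simply cite that instead. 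But as stated your claim is false: $2$-nilpotence alone does not force $\Ab^3_0=\{\hat 0\}$. For instance, $\langle \mathbb{Z}_2^{\,2},+,h\rangle$ with $h(x,y,z)=(0,x_1y_1z_1)$ is $2$-nilpotent (one checks $[1,1]$ is the congruence collapsing the second coordinate and $[1,[1,1]]=0$), yet $h\in\Ab^3_0$ is nonzero and $[1,1,1]\neq 0$.

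Your $(2)\Rightarrow(1)$ has an independent gap. Every summand in the normal form has arity at most two in the \emph{individual} variables, so no summand can meet all three argument blocks; your sentence about ``summands genuinely involving all three blocks'' is vacuous, and clause~(b) is not doing the work you claim. Tracking $f(\textbf{b}_1,\textbf{b}_2,\textbf{c})-f(\textbf{b}_1,\textbf{b}_2,\textbf{d})$ summand-by-summand also ignores that $+$ is noncommutative in a merely $2$-nilpotent group. The paper's route (the $(2)\Rightarrow(1)$ part of Theorem~\ref{thm:nabelian}) sidesteps all of this: from the degree-$2$ representation one shows $\Ab^k_0\textbf{A}=\{\hat 0\}$ for $k\geq 3$ by successively zeroing coordinates, uses nilpotence to upgrade to arbitrary absorbing tuples, and then invokes \cite[Prop.~6.16]{supernil}. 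Your direct verification of $\mathrm{C}(1,1,1;0)$ can be salvaged, but it requires an explicit inclusion--exclusion against the three premise equations together with a treatment of the commutator corrections coming from noncommutativity; as written it is not a proof.
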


\section{Supernilpotent Mal'cev Algebras}\label{sec:charct}

\begin{definition}\cite{supernil}\label{def:absorb}
Let $f(x_1,\ldots,x_n) \in \Pol_n \textbf{A}$ and $a_1,\ldots,a_n,a \in A$. We say $f$ absorbs $(a_1,\ldots,a_n)$ to $a$ if 
$f(b_1,\ldots,b_n)=a$ whenever some $b_i=a_i$.
\end{definition}

If $a \in A$, we say $f(x_1,\ldots,x_n)$ is $a$-absorbing if $f$ absorbs $(a,\ldots,a)$ to $a$. Let $\Ab_{a}^k \textbf{A}$ denote the set of $k$-ary $a$-absorbing polynomials of $\textbf{A}$.

Absorbing polynomials interact with the commutator operation on the congruence lattice in ways which mimic the behavior of the commutator term $[x,y]:=xyx^{-1}y^{-1}$ in relation to a normal series in a group $\textbf{G}$. Note $[x,y] \in \Ab^{2}_{\id} \textbf{G}$.

\begin{lemma}\label{lem:absorb}
Let $\textbf{A}$ be an algebra. If $f \in \Pol_k \textbf{A}$ absorbs $(a_1,\ldots,a_k)$ to $a$, then for any partition $n=n_1 + \cdots + n_k$ and $\{\alpha^{i}_{j}:i \in [k],j \in [n_i]\} \subseteq \Con \textbf{A}$ we have 
\[
f(a_1/[\alpha^{1}_{1},\ldots,\alpha^{1}_{n_1}],\ldots,a_k/[\alpha^{k}_{1},\ldots,\alpha^{k}_{n_k}]) \subseteq a/\left[[\alpha^{1}_{1},\ldots,\alpha^{1}_{n_1}],\ldots,[\alpha^{k}_{1},\ldots,\alpha^{k}_{n_k}]\right]
\]
where we set $[\alpha]=\alpha$; in particular, if $\textbf{A}$ is $n$-supernilpotent, then $\Ab^{m}_{a} \textbf{A}=\{\hat{a}\}$ for $m \geq n$.
\end{lemma}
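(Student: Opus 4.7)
The plan is to derive the conclusion directly from the definition of the higher centralizer, using the absorbing property of $f$ to render the term-condition hypothesis vacuous. Set $\gamma_i := [\alpha^{i}_{1},\ldots,\alpha^{i}_{n_i}]$ for $i \in [k]$ and $\delta := [\gamma_1,\ldots,\gamma_k]$. By definition of the higher commutator we have $\textbf{A} \vDash \mathrm{C}(\gamma_1,\ldots,\gamma_k;\delta)$, so it suffices to show that for every choice of $b_i \equiv_{\gamma_i} a_i$, $i \in [k]$, one has $f(b_1,\ldots,b_k) \equiv_{\delta} a$.

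I would then instantiate the centralizer relation $\mathrm{C}(\gamma_1,\ldots,\gamma_k;\delta)$ with the polynomial $f$ itself, using singleton tuples $\textbf{a}_i = (a_i)$, $\textbf{b}_i = (b_i)$ in the first $k-1$ argument positions and $\textbf{c} = (a_k)$, $\textbf{d} = (b_k)$ in the last position. Hypothesis (1) of the centralizer is immediate from the choice of the $b_i$. The key point is that hypothesis (3) holds trivially: any tuple $(\textbf{z}_1,\ldots,\textbf{z}_{k-1}) \in \{\textbf{a}_1,\textbf{b}_1\}\times\cdots\times\{\textbf{a}_{k-1},\textbf{b}_{k-1}\} \setminus \{(\textbf{b}_1,\ldots,\textbf{b}_{k-1})\}$ forces $z_i = a_i$ for some $i$, and then the absorbing property collapses both $f(z_1,\ldots,z_{k-1},a_k)$ and $f(z_1,\ldots,z_{k-1},b_k)$ to the constant value $a$. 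The conclusion of the centralizer therefore yields $f(b_1,\ldots,b_{k-1},a_k) \equiv_{\delta} f(b_1,\ldots,b_{k-1},b_k)$; a final use of absorbing on the left side gives $f(b_1,\ldots,b_k) \equiv_{\delta} a$, as required. The degenerate case $k=1$ is handled by the stated convention $[\alpha]=\alpha$ and reduces to the fact that polynomials preserve congruences.

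For the ``in particular'' clause, fix $m \geq n$ and suppose $f \in \Ab^{m}_{a} \textbf{A}$. I would apply the main statement with the trivial partition $m = 1 + 1 + \cdots + 1$ and every $\alpha^{i}_{1} := 1_A$. Each inner one-fold commutator equals $\alpha^{i}_{1} = 1_A$, so each class $a_i/\gamma_i$ is all of $A$, while the outer commutator $[1_A,\ldots,1_A]_m$ is bounded above by $[1_A,\ldots,1_A]_n = 0_A$ via repeated application of (HC3) together with the $n$-supernilpotence assumption. Hence $f(A^m) \subseteq \{a\}$, i.e., $f = \hat{a}$.

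The proof is essentially a direct unwinding of definitions, so there is no real technical obstacle; the content lies entirely in recognizing that the absorbing property renders Bulatov's term-condition premise free of charge, and in choosing the trivial partition to extract the supernilpotence corollary from the main statement. The only subtle bookkeeping is matching the tuple syntax of the centralizer definition with the fact that $f$ has a single variable per argument position, which is why singleton tuples suffice throughout.
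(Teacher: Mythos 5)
Your proof is correct and matches the paper's argument in all essentials: both proofs let the absorbing property make Bulatov's term-condition premise hold vacuously, invoke the centralizer relation $\mathrm{C}(\gamma_1,\ldots,\gamma_k;\delta)$ (the paper does this implicitly, you spell it out), and obtain the supernilpotence corollary from (HC3) with each $\alpha^i_1 = 1_A$. Your spelled-out treatment of the $k=1$ degenerate case is a reasonable clarification that the paper leaves tacit.
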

\begin{proof}
Let $b_i \in a_i/[\alpha^{i}_{1},\ldots,\alpha^{i}_{n_i}]$ for $i \in [k]$. Then for any tuple $\bar{u} \in  \left\{\bar{a}_1, \bar{b}_1\right\} \times \cdots \times \left\{\bar{a}_{k-1},\bar{b}_{k-1}\right\} \backslash \left\{(\bar{b}_1,\ldots,\bar{b}_{k-1})\right\}$ we see that $f(\bar{u},a_k)=a=f(\bar{u},b_k)$ since some coordinate of $\bar{u}$ must be an $a_i$. This yields 
\[
a=f(b_1,\ldots,b_{k-1},a_k) \left[[\alpha^{1}_{1},\ldots,\alpha^{1}_{n_1}],\ldots,[\alpha^{k}_{1},\ldots,\alpha^{k}_{n_k}]\right] f(b_1,\ldots,b_{k-1},b_k). 
\]
Now assume $\textbf{A}$ is $n$-supernilpotent. For any $f \in \Ab^{m}_{a} \textbf{A}$ and $b_1,\ldots,b_m \in A=a/1_A$, we have $(f(b_1,\ldots,b_m),a) \in [1,\ldots,1]_m \leq [1,\ldots,1]_n=0_A$ by the first part and (HC3). 
\end{proof}

\begin{definition}\label{def:ntype}
An expanded loop $\textbf{A}=\left\langle A , \cdot , \backslash , / , 0 , F \right\rangle$ is said to be $n$-\textit{type} if
\begin{enumerate}

	\item $F$ is the set of at most (n-1)-ary absorbing polynomials of $\textbf{A}$; 
	
	\item every $f \in \Ab_{0}^{n-1} \textbf{A}$ distributes over ``$\cdot$'' in each coordinate;
	
	\item the n-commutator satisfies $[1,\ldots,1,[1,\ldots,1]_{k}]=0_A$ for each $2 \leq k < n$. 

\end{enumerate}
\end{definition}

From condition (3) above, we see that a $n$-type loop must be nilpotent of class n-1 by repeated application of (HC8). The next two technical lemmas will be used in the proof of Theorem \ref{thm:nabelian}.

\begin{lemma}\label{lem:distr}
Let $\textbf{A}=\left\langle A , \cdot , \backslash , / , 0 , F \right\rangle$ be a $n$-type expanded loop and fix $2 \leq k < n-1$. Suppose we have 
\begin{itemize}

	\item $f \in \Ab^{k}_{0} \textbf{A}$;
	
	\item a partition $n=n_1 + \cdots + n_k$ with $2 \geq n_1$;

	\item a parameter $1 \leq p \leq k$ with $a_{p+1},\ldots,a_{k} \in A$ and polynomials $g_i \in \Ab^{m_i}_{0} A$ with $m_i \geq n_i$ for $1 \leq i \leq p$.

\end{itemize}
Then $f(g_1,\ldots,g_p,a_{p+1},\ldots,a_{k})=\hat{0}$. 
\end{lemma}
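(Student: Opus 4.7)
The plan is to combine Lemma \ref{lem:absorb}, applied separately to each $g_i$ and then to $f$, with the higher commutator identities furnished by condition (3) of Definition \ref{def:ntype}.

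First I would note that since each $g_i \in \Ab^{m_i}_0 \textbf{A}$ with $m_i \geq n_i$, Lemma \ref{lem:absorb} (taking all congruence parameters equal to $1_A$) combined with (HC3) forces $g_i(\vec{b}_i) \in 0/[1_A,\ldots,1_A]_{n_i}$ for every input $\vec{b}_i$ and every $i \leq p$. This step is routine and provides the raw material to feed into the outer application of the absorption lemma.

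The core of the proof is the constant-free case $p=k$. Apply Lemma \ref{lem:absorb} to $f$ with the given partition $n=n_1+\cdots+n_k$ and all $\alpha^i_j = 1_A$, obtaining
\[
f(g_1(\vec{b}_1),\ldots,g_k(\vec{b}_k)) \in 0/\bigl[[1_A,\ldots,1_A]_{n_1},\ldots,[1_A,\ldots,1_A]_{n_k}\bigr].
\]
Because $k<n$, the partition cannot consist solely of $1$'s, so some $n_j \geq 2$; at the same time each $n_j \leq n-(k-1) < n$ since $k \geq 2$. Using (HC4) to permute the $j$-th inner commutator to the last coordinate, and (HC2) to replace the remaining arguments by $1_A$, the displayed commutator is dominated by one of the form $[1_A,\ldots,1_A,[1_A,\ldots,1_A]_{n_j}]$ with $k-1$ outer $1_A$'s. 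Adjusting the outer count by (HC3) and invoking condition (3) of Definition \ref{def:ntype} (with $2 \leq n_j < n$) gives that this commutator is $0_A$, so the composition is identically zero.

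For the general case $p<k$ the main obstacle is that the constants $a_{p+1},\ldots,a_k$ need not lie in any proper commutator class, so Lemma \ref{lem:absorb} cannot be applied with these inputs directly. I would proceed by induction on $k-p$. The key leverage is that the composite $h(\vec{b}_1,\ldots,\vec{b}_p) := f(g_1,\ldots,g_p,a_{p+1},\ldots,a_k)$ is itself $0$-absorbing of arity $m_1+\cdots+m_p$, because any zero entry in some $\vec{b}_i$ forces $g_i(\vec{b}_i)=0$ and then $f$ absorbs. The expected reduction exploits the hypothesis $n_1 \leq 2$ together with the loop distributivity of condition (2) to redistribute the commutator weight $n_{p+1}+\cdots+n_k$ carried by the constants back onto the first $p$ coordinates, bringing us into the scope of the constant-free argument above. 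The delicate verification is that this redistribution respects the constraint $m_i \geq n_i$ and still places the rearranged partition within reach of condition (3); this is where I expect the argument to be most technical.
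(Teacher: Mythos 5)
There is a genuine gap in your constant-free case $p=k$. After Lemma \ref{lem:absorb} places $f(g_1(\vec b_1),\ldots,g_k(\vec b_k))$ in $0/[[1,\ldots,1]_{n_1},\ldots,[1,\ldots,1]_{n_k}]$, you invoke (HC4) and (HC2) to upper-bound this $k$-ary commutator by $[1_A,\ldots,1_A,[1_A,\ldots,1_A]_{n_j}]$ with $k-1$ outer copies of $1_A$. But Definition \ref{def:ntype}(3) only gives $0_A$ for the version with $n-n_j$ outer copies, and $n-n_j\geq k-1$. (HC3) runs the wrong way for you: dropping $(n-n_j)-(k-1)$ outer arguments makes the commutator \emph{larger}, so the most you can conclude is $[1_A,\ldots,1_A,[1_A,\ldots,1_A]_{n_j}]_{k\text{-ary}} \geq 0_A$, which is vacuous. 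The fix is to use (HC8), not (HC2). Rather than coarsening each remaining $[1,\ldots,1]_{n_i}$ to $1_A$ (which discards its arity contribution $n_i$ and shrinks the outer arity to $k$), expand it into $n_i$ separate copies of $1_A$ via $[\beta_1,\ldots,\beta_{m-1},[1,\ldots,1]_{n_i}] \leq [\beta_1,\ldots,\beta_{m-1},\underbrace{1_A,\ldots,1_A}_{n_i}]$ and reorder with (HC4). After expanding every inner commutator except the one of arity $n_j\geq 2$, the bound is precisely $[\underbrace{1_A,\ldots,1_A}_{n-n_j},[1_A,\ldots,1_A]_{n_j}]=0_A$. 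The (HC8)-expansion preserves the total weight $n$ and thus lands exactly in the scope of Definition \ref{def:ntype}(3), which the (HC2)-coarsening does not.

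Two smaller points. First, the inductive reduction with loop distributivity you sketch for $p<k$ is unnecessary: the paper simply notes $(a_i,0)\in 1_A=[1_A]$ for the constants and feeds them into Lemma \ref{lem:absorb} with the trivial singleton part of the partition, then applies the same (HC8)-expansion as above, so no redistribution of weight onto the first $p$ coordinates is needed. Second, you read the hypothesis as $n_1\leq 2$; the paper's proof actually uses $n_1\geq 2$ (the statement contains a typo), and it is this inequality that brings Definition \ref{def:ntype}(3) into play, so the clever use of $n_1\leq 2$ you anticipate has no counterpart in the intended argument.
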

\begin{proof}
By Definition \ref{def:ntype}(3) we have $[1,\ldots,1,[1,\ldots,1]_{n_1}]_{n-n_1}=0$ since $n_1 \geq 2$. Repeated applications of (HC8) and (HC4) yields
\begin{equation}\label{eq:lem3.4}
[[1,\ldots,1]_{n_1},\ldots,[1,\ldots,1]_{n_k}]=0. 
\end{equation}
Lemma \ref{lem:absorb} implies each $(g_i(b_1,\ldots,b_{m_i}),0) \in [1,\ldots,1]_{m_i} \leq [1,\ldots,1]_{n_i}$ for all $b_1,\ldots,b_{m_i} \in A$. We always have $(a_i,0) \in 1_A$ and $(h(b),0) \in 1_A$ for any $a_i,b \in A, b, h \in \Ab^{1}_{0} \textbf{A}$. Then another application of Lemma \ref{lem:absorb} and (\ref{eq:lem3.4}) finishes the calculation.
\end{proof}

\begin{lemma}\label{lem:closure}
Let $L = \textbf{A}=\left\langle A , \cdot , \backslash , / , 0 , F \right\rangle$ be a $n$-type expanded loop. If $f \in \Ab^{k}_{0} \textbf{L}$ with $1 \leq k \leq n-1$, then for any left-associated product $\prod_{j=1}^{m} y_i$ we have that 
\[
f(x_1,\ldots,x_{i-1},\prod_{j=1}^{m} y_j,x_{i+1},\ldots,x_k)
\] 
is a product of absorbing polynomials in the $x_i$'s and $y_j$'s of arities $p$ where $k \leq p \leq n-1$.
\end{lemma}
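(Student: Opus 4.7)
The plan is a double induction---outer, downward on $k$ from $n-1$ to $1$, and inner on the length $m$ of the product. The main construction is a ``correction'' polynomial that measures the failure of $f$ to distribute over $\cdot$ in the chosen coordinate; the key observation is that this correction is itself $0$-absorbing but of strictly higher arity, which feeds the outer induction upward.

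Explicitly, for $f \in \Ab_{0}^{k} \textbf{L}$ with $k < n-1$, I would set
\[
f'(x_1,\ldots,x_{i-1},y,z,x_{i+1},\ldots,x_k) := [f(\ldots,y,\ldots) \cdot f(\ldots,z,\ldots)] \backslash f(\ldots, y \cdot z, \ldots).
\]
Using the loop identities $0 \cdot a = a = a \cdot 0$ and $a \backslash a = 0$ together with the $0$-absorption of $f$ in each coordinate, a short case check shows that $f' \in \Ab_{0}^{k+1} \textbf{L}$. The loop identity $u \cdot (u \backslash v) = v$ then yields the decomposition
\[
f(x_1,\ldots,y \cdot z,\ldots,x_k) = [f(\ldots, y, \ldots) \cdot f(\ldots, z, \ldots)] \cdot f'(\ldots, y, z, \ldots).
\]

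The outer base case $k = n-1$ is handled by Definition \ref{def:ntype}(2), which gives that $f$ distributes over $\cdot$ in each coordinate; a straightforward inner induction on $m$ via $y_1 \cdots y_m = (y_1 \cdots y_{m-1}) \cdot y_m$ then exhibits $f(\ldots, y_1 \cdots y_m, \ldots)$ as a left-associated product of $m$ factors $f(\ldots, y_j, \ldots)$, each a $(n-1)$-ary $0$-absorbing polynomial. For the outer inductive step $k < n-1$, I apply the decomposition above with $y = y_1 \cdots y_{m-1}$ and $z = y_m$, producing three factors. The first factor, $f$ applied to a length-$(m-1)$ product, is handled by the inner induction on $m$; the middle factor is already $k$-ary $0$-absorbing; for the last factor, I regard $y_m$ as one of the spectator arguments of the $(k+1)$-ary polynomial $f'$ (joining the $x_j$'s) and apply the outer induction hypothesis at arity $k+1 \leq n-1$, which rewrites it as a product of absorbing polynomials of arities in $[k+1, n-1] \subseteq [k, n-1]$. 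Concatenating the expansions yields the claim.

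I expect the main obstacle to be bookkeeping rather than any deep step: carefully verifying $0$-absorption of $f'$ at each of its $k+1$ coordinates, and when invoking the outer hypothesis for $f'$, correctly repackaging the variables so that only one coordinate carries the substituted product while $y_m$ joins the spectator inputs. Once these details are clean, the loop arithmetic handles the rest, and the doubly-inductive process terminates at the base case $k = n-1$ where distributivity of $f$ is available by hypothesis.
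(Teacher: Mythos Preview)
Your proposal is correct and follows essentially the same approach as the paper: downward induction on $k$ starting from the distributive base case $k=n-1$, with the inductive step driven by a $(k+1)$-ary $0$-absorbing ``correction'' polynomial measuring the failure of distributivity, to which the outer hypothesis is applied. The only cosmetic difference is the parenthesization of the correction term: the paper sets $f'(y_1,y_2,\bar x):=f(y_2,\bar x)\backslash\bigl(f(y_1,\bar x)\backslash f(y_1\cdot y_2,\bar x)\bigr)$, giving $f(y_1y_2,\bar x)=f(y_1,\bar x)\cdot\bigl(f(y_2,\bar x)\cdot f'(y_1,y_2,\bar x)\bigr)$, whereas you group $f(\ldots,y,\ldots)\cdot f(\ldots,z,\ldots)$ first; either choice works and the resulting argument is the same.
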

\begin{proof}
The proof is by descent from $n-1$ down to $1$. By permuting the coordinates, it is sufficient to establish the lemma for the first coordinate. 
For the base case, take $g \in \Ab^{n-1}_{0} \textbf{L}$. By condition \ref{def:ntype}(2), $g$ distributes over $\cdot$ so writing $\bar{x}=(x_2,\ldots,x_{n-1})$ we have $g(\prod_{i=1}^{m} y_i,\bar{x}) = \prod_{i=1}^{m} g(y_i,\bar{x})$. Now assume the result for $k+1$. Take $f \in \Ab^{k}_{0} \textbf{L}$ and define
\[
f'(y_1,y_2,x_2,\ldots,x_k):= f(y_2,x_2,\ldots,x_k) \backslash \left( f(y_1,x_2,\ldots,x_k) \backslash f(y_1 + y_2,x_2,\ldots,x_k)\right).
\]
Then $f' \in \Ab^{k+1}_{0} \textbf{L}$. Solving for $f$ we have 
\begin{eqnarray*}
f(\prod_{i=1}^{m} y_i,\bar{x}) &=& f(\prod_{i=1}^{m-1} y_i,\bar{x}) \left( f(y_m,\bar{x}) \cdot f'(\prod_{i=1}^{m-1} y_i,y_m,\bar{x}) \right) \\
&=& \left( \cdots \left((f(y_1,\bar{x}) \cdot s_2 ) \cdot s_3 \right) \cdots \right) \cdot s_m
\end{eqnarray*}
where $s_j = f(y_j,\bar{x}) \cdot f'(\prod_{i=1}^{j-1},y_j,\bar{x})$. By the inductive hypothesis, each of the polynomials $f'(\prod_{i=1}^{j-1},y_j,\bar{x})$ is a product of absorbing polynomials of arities $p$ where $k+1 \leq p \leq n-1$; therefore, each $s_j$, and so $f(\prod_{i=1}^{m} y_i,\bar{x})$, is a product of absorbing polynomials of arities $p$ where $k \leq p \leq n-1$. 
\end{proof}

The next definition is meant to give a convenient generalization of when an algebra is polynomially equivalent to a module over a unital ring.

\begin{definition}\label{def:rep}
Let $\textbf{A}$ be an algebra. We say $\textbf{A}$ has a \textit{representation of degree} $n$ if the there exists $0 \in A$ and a binary polynomial $x \cdot y:= r(x,y) \in \Pol_2 \textbf{A}$ such that every polynomial $f(x_1,\ldots,x_k) \in \Pol_k \textbf{A}$ can be written as a left-associated product
\[
f(x_1,\ldots,x_k) = \prod_{i=0}^{p} r_i = \left(\left( \cdots \left( \left(c \cdot r_1 \right) \cdot r_2 \right) \cdots  \right)\cdot r_{p-1}\right) \cdot r_p 
\]
where
\begin{enumerate}

	\item $r_0 = c \in A$;

	\item $p=\min\{k,n\}$;

	\item $0 \cdot x = x \cdot 0 =  x$ for all $x \in A$;
	
	\item for $1 \leq m \leq p$, each $r_m=r_m(x_1,\ldots,x_k)$ is a left-associated product 
	\[
	r_m(x_1,\ldots,x_k) =  \prod_{S \in [k]^{(m)}} t^{m}_{S}(\bar{x}_{S})
	\]
where $t^{m}_{S}(\bar{x}_{S}) \in \Ab^{m}_{0} \textbf{A}$.
\end{enumerate} 
\end{definition}

Since the binary operation is not associative in general, the order of the parentheses matter.

\begin{example}
Let $\textbf{R}$ be a ring and $\textbf{M}$ an $\textbf{R}$-module. Any polynomial $p(x_1,\ldots,x_n)$ in $\textbf{M}$ has the form $p(x_1,\ldots,x_n) = c + r_1x_1 + \cdots + r_nx_n$ for elements $c \in M$, $r_1,\ldots,r_n \in R$; thus, $\textbf{M}$ has a representation of degree 1.
\end{example}

\begin{example}
Let us recall how to deduce the module structure of an abelian algebra $\textbf{A}$ with a Mal'cev term $m(x,y,z)$ \cite{herrmann,commod}.  Fix $0 \in A$ and define $x+y:=m(x,0,y)$. Since $\textbf{A}$ is an abelian Mal'cev algebra, one uses the centralizer relation to show ``+'' is associative and commutative with an additive inverse given by $-x:=m(0,x,0)$; thus, $\left\langle A, +, -x, 0 \right\rangle$ is an abelian group. For each $f \in \Pol_k A$, define 
\[
r_i(x) := m(f(0,\ldots,0,x,0,\ldots,0),f(0,\ldots,0),0)
\] 
where $x$ is in the $i$-th coordinate of $f$. Then $r_i \in \Ab^{1}_{0} \textbf{A}$. Using the centralizer relation one shows that $f(x_1,\ldots,x_k) = f(0,\ldots,0) + \sum_{i=1}^{k} r_i(x_i)$ which yields a representation of degree 1.

Let $R=\Ab^{1}_{0} \textbf{A}$ and consider the operation $+$ induced on $R$. It follows that $\left\langle R, + , -, 0 \right\rangle$ is an abelian group. If we define a multiplication on $R$ by composition $\circ$, then $\textbf{R} = \left\langle R, \circ, +, -, 0, \id \right\rangle$ is a ring with unity given by the identity map $\id$. If we define an action of $R$ on $A$ by $r \ast a = r(a)$ for $r \in R$, $a \in A$, then another argument using the centralizer relation proves this determines a module representation of $\textbf{R}$ which we denote by $\textbf{M}_A$. The degree 1 representation shows $\textbf{A}$ is polynomially equivalent to the $\textbf{R}$-module $\textbf{M}_A$ in which each $f \in \Pol_k \textbf{A}$ is explicitly represented as $f(x_1,\ldots,x_k) = c + \sum_{i=1}^{k} r_i\cdot x_i$.
\end{example}

\begin{example}
Let $\textbf{R}$ be a ring. The congruences of $\textbf{R}$ are exactly the partitions induced by the cosets of left-right-ideals (referred to as ideals in this example). If $I$ is an ideal, let $\alpha_I$ denote the congruence induced by $I$. If $I_1,\ldots,I_n$ are ideals of $\textbf{R}$, then it follows from \cite[Cor 6.12]{supernil} or explicitly from \cite[Lem 3.5]{mayr} that $[\alpha_{I_1},\ldots,\alpha_{I_n}] = \alpha_{K}$ where $K=\sum_{\sigma \in S_n} I_{\sigma(1)} \cdots I_{\sigma(n)}$ and $S_n$ is the permutation on $n$ letters. We see that a ring is $n$-supernilpotent if and only if all products of $n$ elements are trivial; for example, a ring is abelian if and only if it has trivial multiplication. If we take $\textbf{F}$ to be the free ring in a single generator $\{x\}$ without identity and the ideal $I=Fx^{n-1}$, then $\textbf{F}/I$ is n-supernilpotent. Supernilpotent rings may have non-trivial multiplication but cannot have a multiplicative identity. 
\end{example}

\begin{theorem}\label{thm:nabelian}
Let $\textbf{A}$ be an algebra in a Mal'cev variety. The following are equivalent:
\begin{enumerate}

	\item $\textbf{A}$ is $n$-supernilpotent;
	
	\item $\textbf{A}$ is nilpotent and has a representation of degree n-1;
	
	\item $\textbf{A}$ is polynomially equivalent to an $n$-type loop.
	
\end{enumerate}	
\end{theorem}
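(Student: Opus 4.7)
The plan is to establish the cyclic implications $(1) \Rightarrow (3) \Rightarrow (2) \Rightarrow (1)$, using the loop reduct construction from Section \ref{sec:nilpotent} together with Lemmas \ref{lem:absorb}, \ref{lem:distr}, and \ref{lem:closure}. For $(1) \Rightarrow (3)$: since $n$-supernilpotence implies nilpotence of class $n-1$ (observed in the paragraph after (HC8)), Section \ref{sec:nilpotent} produces a loop $L = \left\langle A, \cdot, \backslash, /, 0, F \right\rangle$ polynomially equivalent to $\textbf{A}$. I take $F := \bigcup_{1 \leq k \leq n-1} \Ab^k_0 L$, so Definition \ref{def:ntype}(1) holds by construction, and condition (3) of Definition \ref{def:ntype} follows from $[1,\ldots,1,[1,\ldots,1]_k] \leq [1,\ldots,1]_n = 0$ via iterated (HC8). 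For the distributivity condition (2), given $t \in \Ab^{n-1}_0 L$, I form the obstruction polynomial
\[
s(y_1, y_2, x_2, \ldots, x_{n-1}) := \bigl(t(y_1, \bar{x}) \cdot t(y_2, \bar{x})\bigr) \backslash t(y_1 \cdot y_2, \bar{x})
\]
and check by the loop identities together with absorption of $t$ that $s \in \Ab^n_0 L$; Lemma \ref{lem:absorb} then forces $s \equiv \hat{0}$, yielding distributivity of $t$ in the first argument, hence in every argument by permutation (Lemma \ref{lem:cen}(3)).

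For $(3) \Rightarrow (2)$: condition (3) of Definition \ref{def:ntype} at $k = n-1$ gives $[1, [1,\ldots,1]_{n-1}] = 0$, and iterating (HC8) to flatten nested binary commutators yields nilpotence of class $n-1$, so the loop reduct is in hand. I then induct on the construction of a polynomial $f \in \Pol_k \textbf{A}$ to express $f$ in the normal form of Definition \ref{def:rep}. Constants and projections form the base cases; loop products, quotients, and compositions with $F$-operations are handled via Lemma \ref{lem:closure}, which guarantees that applying an absorbing polynomial to a loop product re-decomposes as a product of absorbing polynomials of arity between the original arity and $n-1$. Regrouping the resulting left-associated product by arity and subset is then a combinatorial rearrangement which uses nilpotence to commute factors modulo commutators that ultimately vanish.

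For $(2) \Rightarrow (1)$: I verify $\textbf{A} \vDash \mathrm{C}(1,\ldots,1;0)$. Given a polynomial $f$ and vectors satisfying the centralizer hypothesis, expand $f = c \cdot \prod_{m=1}^{n-1} \prod_{S \in [k]^{(m)}} t^m_S(\bar{x}_S)$ via the representation. The $k$ variables of $f$ distribute into the $n$ centralizer blocks, and each absorbing factor $t^m_S$ has arity $m \leq n-1$, so its variables miss at least one block. Lemma \ref{lem:absorb} applied to the partition of $n$ induced by how $t^m_S$'s support falls into the blocks confines the variation $\bar{c} \to \bar{d}$ inside an iterated commutator which collapses to $[1,\ldots,1]_n$ via repeated (HC8); aggregating contributions along the non-associative left-associated product uses the loop's nilpotence to commute factors modulo already-collapsing commutators.

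The main obstacle is Step $(2) \Rightarrow (1)$: the bookkeeping needed to reconcile the non-commutativity of the loop product with the $n$-centralizer condition, factor by factor. A cleaner route is to first show that the representation forces every $n$-ary $0$-absorbing polynomial in $\textbf{A}$ to equal $\hat{0}$ (by placing it in the normal form of Definition \ref{def:rep} and using Lemma \ref{lem:distr} to annihilate each mixed product of absorbing subfactors) and then invoke the fact that in a Mal'cev variety $[1,\ldots,1]_n$ is generated by the images of such $n$-ary absorbing polynomials at $0$, in the spirit of \cite[Lem.~6.9]{supernil}. A secondary technicality in Step $(3) \Rightarrow (2)$ is the promotion of lower-arity absorbing factors via Lemma \ref{lem:closure} to maintain the strict left-associated structure grouped by arity.
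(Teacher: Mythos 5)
The central gap is in your step $(1) \Rightarrow (3)$. You set $F := \bigcup_{k \leq n-1} \Ab^k_0 L$ and declare ``Definition \ref{def:ntype}(1) holds by construction,'' but the theorem requires showing $\textbf{A}$ is \emph{polynomially equivalent} to the resulting expansion $\left\langle A, \cdot, \backslash, /, 0, F \right\rangle$. What Section \ref{sec:nilpotent} gives for free is a loop in which $F$ consists of \emph{all} remaining operations of $\textbf{A}$; once you shrink $F$ to only the absorbing polynomials, it is no longer automatic that the loop clone still captures $\Pol \textbf{A}$. Showing it does is exactly the content of the degree-$(n-1)$ representation, and the paper accordingly proves $(1) \Rightarrow (2)$ by a polynomial interpolation argument (constructing the $t_S$'s inductively over subsets $S \subseteq [m]$) and then cites it explicitly inside $(1) \Rightarrow (3)$: ``From the implication $(1) \Rightarrow (2)$ above, we have a representation of degree n-1 $\ldots$ This implies we can take $F' = \bigcup_{k=1}^{n-1} \Ab^{k}_{0} \textbf{A} \subseteq F$ in order to generate $\Pol \textbf{A}$.'' Your cycle $(1) \Rightarrow (3) \Rightarrow (2) \Rightarrow (1)$ omits this interpolation and so never establishes that the $0$-absorbing polynomials of arity $\leq n-1$, together with the loop operations, generate $\Pol \textbf{A}$. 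Without it, $(3)$ is not in hand and the chain does not close. Your verification of Definition \ref{def:ntype}(2) via the obstruction polynomial $s$ is essentially the paper's construction (the paper uses a two-fold ``$/$'' in place of your ``$\backslash$'', but the computation is the same), and Definition \ref{def:ntype}(3) via iterated (HC8) is fine, but neither of these addresses the generation issue. (Also, invoking Lemma \ref{lem:cen}(3) to pass from the first coordinate to all coordinates is not the right tool; one simply repeats the same construction with $y_1, y_2$ in the $i$-th slot, as the paper notes.)

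Your $(2) \Rightarrow (1)$ ``cleaner route'' is directionally the same as the paper's but the appeal to a fact ``in the spirit of \cite[Lem.~6.9]{supernil}'' papers over a nontrivial step. That lemma says the commutator is generated by pairs $(f(\textbf{a}), f(\textbf{b}))$ where $f$ absorbs the \emph{arbitrary} tuple $\textbf{a}$; the restriction to $0$-absorbing polynomials evaluated against a single fixed $0$ is precisely Lemma \ref{lem:cong} of this paper, which uses the bijective unary polynomials furnished by nilpotence (the $h_i$'s and $h$). The paper's own proof of $(2) \Rightarrow (1)$ avoids Lemma \ref{lem:cong} by first showing $\Ab^k_0 \textbf{A} = \{\hat{0}\}$ for $k \geq n$ from the normal form, then using the bijectivity trick to upgrade this to: every $n$-ary polynomial absorbing \emph{any} tuple is constant, at which point \cite[Prop.6.16]{supernil} applies directly. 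If you want your cleaner route, you must actually carry out the Lemma \ref{lem:cong} reduction rather than cite it as folklore; in either case the bijectivity-via-nilpotence argument cannot be skipped.
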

\begin{proof}
Let $m(x,y,z)$ be a Mal'cev term for $\textbf{A}$.

$(1) \Rightarrow (2)$: Assume $\textbf{A}$ is $n$-supernilpotent. From the discussion in Section \ref{sec:prelim}, $\textbf{A}$ is nilpotent of class n-1 and polynomially equivalent to a loop $\textbf{L}_A = \left\langle A , \cdot , \backslash , / , 0 , F \right\rangle$ which is also nilpotent of class n-1. We will establish the representation by a form of polynomial interpolation in $L_\textbf{A}$. We take the loop multiplication ``$\cdot$'' as the basic binary product and note that condition \ref{def:rep}(3) is immediately satisfied.

Take $f \in \Pol_m \textbf{A}$. Set $f(0,\ldots,0)=c$. Inductively, we shall define for $0 \leq k \leq m$ polynomials $r_k(x_1,\ldots,x_m)$ such that
\begin{equation}\label{eq:thm3.10}
\left(\left(\left( \cdots \left( \left(c \cdot r_1 \right) \cdot r_2 \right) \cdots  \right)\cdot r_{k-1}\right) \cdot r_k\right)  (\bar{0})_{S}[\bar{x}_S] =f(\bar{0})_{S}[\bar{x}_S]
\end{equation}
for each $S \in [m]^{(k)}$ where $\bar{0}=(0,\ldots,0)$. Set $r_0(\bar{x}) :=  f(0,\ldots,0)=c \in A$. For $i \in [m]$, define 
\[
t_i(x):= c\backslash f(\bar{0})_{\{i\}}[x]. 
\] 
Then $t_i(0) = c\backslash f(\bar{0})_{\{i\}}[0]= c \backslash f(\bar{0}) = c \backslash c = 0$; thus, $t_i \in \Ab^{1}_{0} \textbf{A}$. Then set $r_1(x_1,\ldots,x_m):= \prod_{i \in [m]} t_i(x_i)$ a left-associated product. We then check (\ref{eq:thm3.10}): 
\[
\left(c \cdot r(x_1,\ldots,x_m)\right) (\bar{0})_{\{i\}}[x_i] = c \cdot t_i(x_i) = c \cdot ( c\backslash f(\bar{0})_{\{i\}}[x_i]) = f(\bar{0})_{\{i\}}[x_i].
\] 
Inductively, suppose $r_k(x_1,\ldots,x_m)$ is defined for $k < m$ and satisfies (\ref{eq:thm3.10}). Now for $S \in [m]^{(k+1)}$, define
\[
t_{S}(\bar{x}_S):= \left(( \prod_{i=0}^{k} r_i )(\bar{0})_{S}[\bar{x}_{S}] \right) \, \backslash \, f(\bar{0})_{S}[\bar{x}_S].
\] 
Then for any $i \in S$, 
\begin{eqnarray*}
t_{S}(\bar{x}_S)_{i}[0] &=& \left(( \prod_{i=0}^{k} r_i )(\bar{0})_{S \backslash \{i\}}[\bar{x}_{S \backslash \{i\}}] \right) \, \backslash \, f(\bar{0})_{S \backslash \{i\}}[\bar{x}_{S \backslash \{i\}}] \\
 &=& f(\bar{0})_{S \backslash \{i\}}[\bar{x}_{S \backslash \{i\}}] \, \backslash \, f(\bar{0})_{S \backslash \{i\}}[\bar{x}_{S \backslash \{i\}}] = 0
\end{eqnarray*}
using the inductive hypothesis. This shows $t_{S}(\bar{x}_S) \in \Ab^{k+1}_{0} \textbf{A}$. Define the left-associated product $r_{k+1}(x_1,\ldots,x_m) := \prod_{S \in [m]^{k+1}} t_{S}(\bar{x}_S)$ and verify (\ref{eq:thm3.10}): fix $T \in [m]^{(k+1)}$ and calculate
\begin{eqnarray*}
(\prod_{i=0}^{k+1} r_i )(\bar{0})_{T}[\bar{x}_{T}] &=&  \left(( \prod_{i=0}^{k} r_i )(\bar{0})_{T}[\bar{x}_{T}] \right) \cdot t_{T}(\bar{x}_T) \\
&=&  \left(( \prod_{i=0}^{k} r_i )(\bar{0})_{T}[\bar{x}_{T}] \right) \cdot \left(\left(( \prod_{i=0}^{k} r_i )(\bar{0})_{T}[\bar{x}_{T}] \right) \, \backslash \, f(\bar{0})_{T}[\bar{x}_T] \right)\\
&=& f(\bar{0})_{T}[\bar{x}_T].
\end{eqnarray*}

Now for $k=m$, note $r_i(\bar{0})_{[m]}[\bar{x}_{[m]}]= r_i(x_1,\ldots,x_m)$ when $i \leq m$. So by (\ref{eq:thm3.10}) we have for the left-associated product
\[
\prod_{i=0}^{m} r_i(x_1,\ldots,x_m) \hspace{8.3cm} 
\]
\begin{eqnarray*}
&=& \left(\prod_{i=0}^{m-1} r_i(x_1,\ldots,x_m) \right) \cdot \left(\left( \prod_{i=0}^{m-1} r_i(x_1,\ldots,x_m) \right)\backslash f(x_1,\ldots,x_m) \right) \\
&=& f(x_1,\ldots,x_m).
\end{eqnarray*}
Since $\textbf{A}$ is $n$-supernilpotent, by Lemma \ref{lem:absorb} $t_S(\bar{x}_S)=\hat{0}$ for $S \in [m]^{(k)}$ with $k \geq n$; thus, $r_k(x_1,\ldots,x_m)=\hat{0}$ for $k \geq n$. This reduces the representation above to
\[
f(x_1,\ldots,x_m) = \prod_{i=0}^{p} r_i(x_1,\ldots,x_m)
\]
where $p = \min\{m,n-1\}$. Altogether, we have established that $\textbf{A}$ has a representation of degree n-1.

$(1) \Rightarrow (3)$: Assume $\textbf{A}$ is $n$-supernilpotent. Again we have that $\textbf{A}$ is polynomially equivalent to a loop $\textbf{L}_A = \left\langle A , \cdot , \backslash , / , 0 , F \right\rangle$ which is nilpotent of class n-1. Condition \ref{def:ntype}(3) is established by using (HC8). To show \ref{def:ntype}(2), let $f \in \Ab^{n-1}_{0} \textbf{A}$ and define 
\[
f'(y,z,x_2,\ldots,x_{n-1}) : = \hspace{8cm}
\]
\[
\left(f(y \cdot z,x_2,\ldots,x_{n-1})/ f(z,x_2,\ldots,x_{n-1})\right)/ f(y,x_2,\ldots,x_{n-1}).  
\]
Then $f' \in \Ab^{n}_{0} \textbf{A}$ and so by Lemma \ref{lem:absorb}, $f'=\hat{0}$; thus, 
\[
f(y \cdot z,x_2,\ldots,x_{n-1}) = f(y,x_2,\ldots,x_{n-1}) \cdot f(z,x_2,\ldots,x_{n-1}).
\]
Inductively, we have for a left associated product $\prod_{i=1}^{m} y_i$,
\begin{eqnarray*}
f(\prod_{i=1}^{m} y_i,x_2,\ldots,x_{n-1}) &=& f(\prod_{i=1}^{m-1} y_i,x_2,\ldots,x_{n-1}) \cdot f(y_m,x_2,\ldots,x_{n-1}) \\
&=& \prod_{i=1}^{m} f(y_i,x_2,\ldots,x_{n-1}) 
\end{eqnarray*}
which is again left-associated. The same argument works for each coordinate. This establishes \ref{def:ntype}(2). From the implication $(1) \Rightarrow (2)$ above, we have a representation of degree n-1 for $\textbf{A}$. This implies we can take $F' = \bigcup_{k=1}^{n-1} \Ab^{k}_{0} \textbf{A} \subseteq F$ in order to generate $\Pol \textbf{A}$ and so condition \ref{def:ntype}(1) is satisfied.

$(3) \Rightarrow (2)$: Assume $\textbf{A}$ is polynomially equivalent to the $n$-type loop $\textbf{L} = \left\langle A , \cdot , \backslash , / , 0 , F \right\rangle$. So $\Pol_k \textbf{A}= \Pol_k \textbf{L}$ for all $k \in \mathds{N}$ and both $\textbf{A}$ and $\textbf{L}$ are nilpotent of class n-1. Let $P_k$ denote the set of polynomials $f(x_1,\ldots,x_k) \in \Pol_k \textbf{L}$ which can be represented in the form of Definition \ref{def:rep} for n-1; clearly, $P_k \subseteq \Pol_k \textbf{L}$. We shall establish $\Pol_k \textbf{L} \subseteq P_k$ by showing that $P_k$ contains the constants, projections and is closed under the fundamental operations of $\textbf{L}$. That $P_k$ contains the constants and projections is immediate. It is also clear that $F \cap \Pol_k \textbf{L} \subseteq P_k$ for $k=1,\ldots,n-1$. For the binary operations $\cdot,\backslash,/$ the proof in $(1) \Rightarrow (2)$ works again here to allow us to ``solve'' for the representations. The more involved argument is closure under $F$.

Fix $f \in \Ab^{m}_{0} \textbf{L} \subseteq F$. Take $g_1,\ldots,g_k \in P_k$. We need to show $f(g_1,\ldots,g_m) \in P_k$. Since each $g_i \in P_k$, we can write a left-associated product $g_i = \prod_{j=0}^{n-1} r^{i}_{j}$ for $i=1,\ldots,m$. By Lemma \ref{lem:closure}, $f(g_1,\ldots,g_m)$ is a product (associated in some manner) of polynomials $h(\xi_1,\ldots,\xi_p)$ where $h \in \Ab^{p}_{0} \textbf{L}$ for $m \leq p \leq n-1$ and $\{\xi_1,\ldots,\xi_p\} \subseteq \{r^{i}_{j} : i \in [m], 0 \leq j \leq n-1 \}$. Another application of Lemma \ref{lem:closure} allows us to write each $h(\xi_1,\ldots,\xi_p)$ as a product (associated in some manner) of absorbing polynomials $h'(\zeta_1,\ldots,\zeta_q)$ with $h' \in \Ab^{q}_{0} \textbf{L}$ for $m \leq p \leq q \leq n-1$ and $\{\zeta_1,\ldots,\zeta_q\} \subseteq \{t^{i}_{S}(\bar{x}_S): i \in [m], S \in [k]^{j},0 \leq j \leq n-1 \}$ where the last set comprises all the absorbing polynomials which appear in the products representing the $r^{i}_{j}$'s. 

Since each $\zeta_i \in \Ab^{j_i}_{0} \textbf{L}$, we have $h'(\zeta_1,\ldots,\zeta_q) \in \Ab^{\sigma}_{0} \textbf{L}$ for some $\sigma \leq k^{\ast}=j_{1} + \cdots + j_{q}$ since the $\zeta_i$ may have variables in common. According to Lemma \ref{lem:distr}, we see that $h'(\zeta_1,\ldots,\zeta_q)=\hat{0}$ unless $k^{\ast} \leq n-1$. This just means each non-trivial $h'(\zeta_1,\ldots,\zeta_q) \in F$. Since $P_k$ is closed under $\cdot$ and we have shown $f(g_1,\ldots,g_m)$ is a finite product of absorbing polynomials with representations in $P_k$, it follows that $P_k$ is closed under the operations in $F$.

$(2) \Rightarrow (1)$: Assume $\textbf{A}$ is nilpotent and has a representation of degree $n-1$. We first show $\Ab^{k}_{0} \textbf{A}= \{\hat{0}\}$ where $k \geq n$. Let $f \in \Ab^{k}_{0} \textbf{A}$. Then we can write 
\[
f(x_1,\ldots,x_k) = \left(\left( \cdots \left( \left(c \cdot r_1 \right) \cdot r_2 \right) \cdots  \right)\cdot r_{n-2}\right) \cdot r_{n-1} 
\]
where $r_{m}(x_1,\ldots,x_k) = \prod_{s \in [k]^{(m)}} t^{m}_{s}(\bar{x}_{s})$ is a left-associated product with $t^{m}_{s}(\bar{x}_{s}) \in \Ab^{m}_{0} \textbf{A}$. Using that $f$ is $0$-absorbing, we see that $0=f(0,\ldots,0)=c$, and so $0=f(\bar{x})_{[n]\backslash \{i\}}[0] = t_{i}(x_i)$ for each $i \in [n]$. This shows $r_1(x_1,\ldots,x_k)=\hat{0}$. Continuing in this fashion, having established $r_m(x_1,\ldots,x_m)=\hat{0}$ we see that $0=f(\bar{x})_{[n] \backslash s}[0,\ldots,0] = t^{m+1}_{s}(\bar{x}_{s})$ for each $s \in [k]^{(m+1)}$ and so we have $r_{m+1}(x_1,\ldots,x_k)=\hat{0}$. Inductively, we have shown $f=\hat{0}$. 

Now let $g \in \Pol_n \textbf{A}$ and suppose $g$ absorbs $(a_1,\ldots,a_n)$ to $a$. We will show $g=\hat{a}$. It will then follow from \cite[Prop.6.16]{supernil} that the $n$-commutator $[1,\ldots,1]=0_A$ and so $\textbf{A}$ is $n$-supernilpotent. Since $\textbf{A}$ is nilpotent, for each $i \in [n]$ the unary polynomial $h_i(x):=m(x,0,a_i)$ is bijective. Define the polynomial 
\[
t(x_1,\ldots,x_n):=m(g(h_1(x_1),\ldots,h_n(x_n)),a,0).
\]
Then $t \in \Ab^{n}_{0} \textbf{A}$ and so $t=\hat{0}$ be the previous paragraph. Again, the unary polynomial $m(x,a,0)$ is bijective by nilpotence. Then for all $b_1,\ldots,b_n \in A$, the evaluation 
\[
m(g(h_1(b_1),\ldots,h_n(b_n)),a,0)=t(b_1,\ldots,b_n)=0=m(a,a,0)
\] 
implies $g(h_1(b_1),\ldots,h_n(b_n))=a$. So $g(h_1(x_1),\ldots,h_n(x_n))=\hat{a}$. Since each $h_i$ is bijective it must be that $g=\hat{a}$.
\end{proof}

The argument in the last paragraph can be used to simplify the generating set for the higher commutators in nilpotent algebras. This can be seen as complimenting the fact that nilpotent algebras in varieties with a weak difference term have \textit{regular} congruences (consult \cite[Thm 4.8]{twocommutator}, \cite[Cor 7.7]{commod}); that is, congruences as equivalence relations are uniquely determined by any one of their equivalence classes.

\begin{lemma}\label{lem:cong}
Let $\textbf{A}$ be a nilpotent algebra in a variety with a weak difference term. Fix $0 \in A$. For any $\theta_1, \ldots, \theta_n \in \Con \textbf{A}$, 
\[
[\theta_1,\ldots,\theta_n]= \Cg^{\textbf{A}}(\{(0,f(b_1,\ldots,b_n)): f \in \Ab^{n}_{0} \textbf{A}, (0,b_i) \in \theta_i \}).
\]
\end{lemma}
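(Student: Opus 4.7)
Let $C$ denote the congruence on the right-hand side. The inclusion $C \subseteq [\theta_1,\ldots,\theta_n]$ follows directly from Lemma \ref{lem:absorb}: each generating pair has the form $(0, f(b_1,\ldots,b_n))$ with $f \in \Ab^{n}_{0} \textbf{A}$ and $(0,b_i) \in \theta_i$, and Lemma \ref{lem:absorb} (applied with the partition $n = 1 + \cdots + 1$ and each $\alpha^{i}_{1}=\theta_i$) places $f(b_1,\ldots,b_n)$ in $0/[\theta_1,\ldots,\theta_n]$.

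For the reverse inclusion the plan is to exploit the regularity of congruences in nilpotent algebras in varieties with a weak difference term (\cite[Cor 7.7]{commod}, \cite[Thm 4.8]{twocommutator}): every congruence is determined by any one of its blocks. Since $C \subseteq [\theta_1,\ldots,\theta_n]$ is already established, regularity reduces the task to showing that the $0$-blocks coincide, i.e.\ $0/[\theta_1,\ldots,\theta_n] \subseteq 0/C$; then the two congruences must agree.

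The $0$-block containment is obtained by verifying that each standard generator of $[\theta_1,\ldots,\theta_n]$ already lies in $C$. Using the usual description of the higher commutator in terms of absorbing polynomials (as in \cite[Prop.6.16]{supernil}), it suffices to treat a pair $(g(\bar{a}_1,\ldots,\bar{a}_n), g(\bar{b}_1,\ldots,\bar{b}_n))$ where $g$ absorbs $(\bar{a}_1,\ldots,\bar{a}_n)$ to $a = g(\bar{a}_1,\ldots,\bar{a}_n)$ and $\bar{a}_i \equiv_{\theta_i} \bar{b}_i$. For such a generator I mimic the translation argument from the final paragraph of Theorem \ref{thm:nabelian}: let $m$ be the weak difference term, set $h_i(\bar{x}) := m(\bar{x}, 0, \bar{a}_i)$ (bijective coordinate-wise with polynomial inverse $h_i^{-1}$, via the $f_n, g_n$ construction adapted to the weak difference term), and define
\[
t(\bar{x}_1,\ldots,\bar{x}_n) := m\bigl(g(h_1(\bar{x}_1),\ldots,h_n(\bar{x}_n)), a, 0\bigr),
\]
which is $0$-absorbing. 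With $\bar{b}_i' := h_i^{-1}(\bar{b}_i)$, the $\theta_i$-compatibility of $h_i^{-1}$ and the identity $h_i^{-1}(\bar{a}_i)=0$ give $(0,\bar{b}_i') \in \theta_i$, so $(0, t(\bar{b}_1',\ldots,\bar{b}_n'))$ is a generator of $C$. Applying the polynomial-invertible translation $k(x):= m(x,a,0)$ (which satisfies $k(a)=0$ and $k(g(\bar{b}_1,\ldots,\bar{b}_n)) = t(\bar{b}_1',\ldots,\bar{b}_n')$) and then pulling back through $k^{-1}$ transports the pair $(0, t(\bar{b}_1',\ldots,\bar{b}_n'))$ to the original generator $(a, g(\bar{b}_1,\ldots,\bar{b}_n))$, placing the latter inside $C$.

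The main technical obstacle is ensuring that the bijectivity of the translations $h_i$ and $k$ and the existence of their polynomial inverses persist in the weak-difference-term setting, where $m$ satisfies the Mal'cev identities only modulo binary commutators rather than strictly. This requires transferring the inductive construction of $f_n, g_n$ from Section \ref{sec:nilpotent} (which was formulated for Mal'cev varieties) to the weak-difference-term setting, using nilpotence to force the residual commutator obstructions down to $0_A$ after finitely many steps; this step is standard but must be made explicit, and once granted, the translation argument above proceeds verbatim.
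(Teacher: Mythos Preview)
Your translation argument is essentially the paper's proof: construct a $0$-absorbing polynomial $t$ from an arbitrary absorbing generator via the shifts $h_i$, land the pair in the generating set, and pull back through a polynomial-invertible translation. The paper cites \cite[Lem 6.9]{supernil} (not Prop.~6.16) for the generating set of $[\theta_1,\ldots,\theta_n]$, but otherwise the mechanics match.

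Two points where you diverge unnecessarily from the paper. First, the regularity detour is superfluous: your translation argument already shows that \emph{every} generator of $[\theta_1,\ldots,\theta_n]$ lies in $C$, not merely that the $0$-blocks agree, so the appeal to regularity adds nothing. Second, and more importantly, the ``main technical obstacle'' you flag---adapting the $f_n,g_n$ construction to a term that is only a weak difference term---is sidestepped entirely in the paper. The first sentence of the paper's proof observes that since $\textbf{A}$ is nilpotent in a variety with a weak difference term, $\mathcal V(\textbf{A})$ is itself a Mal'cev variety; hence one may take $m$ to be a genuine Mal'cev term for $\textbf{A}$, and the Section~\ref{sec:nilpotent} machinery applies verbatim with no residual commutator obstructions. (Concretely, iterating the weak difference term as in Example~\ref{ex:nilterms} produces a term satisfying the Mal'cev identities on $\textbf{A}$.) So rather than transferring the $f_n,g_n$ construction to the weak-difference setting, you should simply upgrade $m$ to a Mal'cev term at the outset.
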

\begin{proof}
Assume $\textbf{A}$ is nilpotent of degree $k$. Since $\textbf{A}$ is a nilpotent algebra in variety with a weak difference term, $\mathcal V(\textbf{A})$ is a Mal'cev variety. Let $m(x,y,z)$ be a Mal'cev term for $\textbf{A}$ and set $S= \{(0,f(b_1,\ldots,b_n)): f \in \Ab^{n}_{0} A, (0,b_i) \in \theta_i \}$. From \cite[Lem 6.9]{supernil}, we know that $[\theta_1,\ldots,\theta_n]$ is generated as a congruence by the set of pairs 
\[
\{(f(a_1,\ldots,a_n), f(b_1,\ldots,b_n)): f \text{ absorbs } (a_1,\ldots,a_n), (a_i,b_i) \in \theta_i \}.
\]
By inclusion of generating sets, $\Cg^{\textbf{A}}(S) \leq [\theta_1,\ldots,\theta_n]$. To reverse the inequality, take $a=f(a_1,\ldots,a_n), b=f(b_1,\ldots,b_n)$ such that $f$ absorbs $(a_1,\ldots,a_n)$ to $a$ and $(a_i,b_i) \in \theta_i$. From the discussion in Section \ref{sec:nilpotent}, there are unary polynomials $h_i(x):=f_k(x,0,a_i)$ for $i=1,\ldots,n$ and $h(x):= f_k(x,a,0)$ such that 
\begin{eqnarray*}
m(h_i(x),0,a_i)) &= \ x \ =& h_i(m(x,0,a_i)) \\
m(h(x),a,0) &= \ x \ =& h(m(x,a,0)).
\end{eqnarray*}  
Define the polynomial $t(x_1,\ldots,x_n):=m(f(m(x_1,0,a_1),\ldots,m(x_n,0,a_n)),a,0)$ and note $t \in \Ab^{n}_{0} \textbf{A}$. Then $a_i \, \theta_i \, b_i$ implies $h_i(b_i) \, \theta_i \, h_i(a_i)=f_k(m(0,0,a_i),0,a_i)=0$ which yields 
\begin{eqnarray*}
0 \, &\Cg^{\textbf{A}}(S)& \, t(h_1(b_1),\ldots,h_n(b_n)) \\
&=& m(f(m(h_1(b_1),0,a_1),\ldots,m(h_n(b_n),0,a_n)),a,0) \\
&=& m(f(b_1,\ldots,b_n),a,0)=m(b,a,0).  
\end{eqnarray*}
Then compatibility implies $b=h(m(b,a,0)) \, \Cg^{\textbf{A}}(S) \, h(0)=h(m(a,a,0))=a$.
\end{proof}

\begin{example}
In this example we observe the necessity in Definition \ref{def:ntype}(3) that all nested commutators are trivial. Define $\textbf{B}= \left\langle \mathds{Z}_{8}, +, f(x,y,z) \right\rangle$ where ``$+$'' is addition modulo 8 and $f(x,y,z)=2xyz$ is computed by multiplication modulo 8. $\Con \textbf{B}$ is the 4-element chain $0 < \beta < \alpha < 1$ where $(i,j) \in \beta \Leftrightarrow i-j \equiv 4 \mod 8$ and $(i,j) \in \alpha \Leftrightarrow i - j$ is even. Since $f$ is multilinear, the only way to generate non-trivial polynomials of arities at least 4 is by repeated compositions using $f$. It is not difficult to see that $g(x_1,x_2,x_3,x_4) \equiv 0 \mod 4$ whenever $g(x_1,x_2,x_3,x_4) \in \Ab^{4}_{0} \textbf{B}$ and $\Ab^{k}_{0} \textbf{B}=\{\hat{0}\}$ for $k \geq 6$; thus, $\textbf{B}$ is 6-supernilpotent. Using Lemma \ref{lem:cong}, we can calculate the remaining commutators. 

The evaluation $f(1,1,1)=2$ yields $(0,2) \in [1,1,1] \Rightarrow \alpha \leq [1,1,1]$. Since $\textbf{B}$ is nilpotent, we can then conclude $[1,1]=[1,1,1]=\alpha$, $[1,\alpha]=\beta$ and $[1,\beta]=0$. The evaluation $f(1,1,2)=4$ yields $(0,4) \in [1,1,\alpha]$ which implies $\beta \leq [1,1,\alpha]=[1,1,[1,1,1]] \leq [1,1,1,1,1] \leq [1,1,1,1]$. Then $g(x_1,x_2,x_3,x_4) \equiv 0 \mod 4$ whenever $g(x_1,x_2,x_3,x_4) \in \Ab^{4}_{0} \textbf{B}$ yields $0=[1/\beta,1/\beta,1/\beta,1/\beta]=([1,1,1,1] \vee \beta)/\beta = [1,1,1,1]/\beta \Rightarrow [1,1,1,1]=\beta$ using (HC6). It also follows that $g(x_1,x_2,x_3,x_4) \equiv 0 \mod 8$ whenever some $x_i \in \{0,2,4,6\}$ which implies $0=[1,1,1,\alpha]=[1,1,1,[1,1]]$ by Lemma \ref{lem:cong}. Finally, $0=[1,\beta]=\left[1,\left[1,1,1,1\right]\right]$. 
\end{example}

\subsection{Abelian and 3-Supernilpotent}

In this section, we note how to derive the affine structure of abelian algebras and the characterization of 3-supernilpotent Mal'cev algebras \cite[Thm.3.3]{mudrinski} from Theorem \ref{thm:nabelian} and general considerations of terms in loops. Let $\textbf{A}$ be an algebra with Mal'cev term $m(x,y,z)$ and fix an element $0 \in A$. Making the following definitions 
\begin{itemize}

	\item $x \cdot y:=m(x,0,y)$ \ \ $(x,y \in A)$,
	
	\item $(t \cdot s)(\bar{x}):=m(t(\bar{x}),0,s(\bar{x}))$ \ \ $(t,s \in \Ab^{n}_{0} \textbf{A})$,
	
	\item $(t \circ s)(x_1,\ldots,x_{\max\{n,m\}}):= t(s(x_1,\ldots,x_m),x_2,\ldots,x_n)$ \ \ $(t \in \Ab^{n}_{0} \textbf{A}, s \in \Ab^{m}_{0} \textbf{A})$,
	
	\item $t \ast a:= t(a)$ \ \ $(t \in \Ab^{1}_{0} \textbf{A}, a \in A)$,

\end{itemize}
we see that 
\begin{itemize}

	\item $0 \cdot x = x \cdot 0 = 0$ \ \ $(x \in A)$;
	
	\item $(t \cdot s) \circ r = (t \circ r) \cdot (s \circ r)$ \ \ $(t,s \in \Ab^{n}_{0} \textbf{A}, r \in \Ab^{m}_{0} \textbf{A})$;
	
	\item $(t \cdot s)\ast a = (t \ast a) \cdot (s \ast a)$ \ \ $(t,s \in \Ab^{1}_{0} \textbf{A}, a \in A)$.

\end{itemize}
Additional properties will depend on the behavior of the absorbing polynomials and the loop structure induced by supernilpotence.

Now fix a loop $\textbf{L} = \left\langle A, \cdot, \backslash, / , 0 \right\rangle$. It follows from the loop identities that for any $a \in A$ the maps
\begin{eqnarray*}
x &\mapsto& a \cdot x \ \ \ \ \ \ \  x \mapsto a \backslash x\\
x &\mapsto& x \cdot a \ \ \ \ \ \ \  x \mapsto a / x 
\end{eqnarray*}
are permutations on $A$. We can then define several terms which in some manner ``measure'' the failure of the operation $\cdot$ to be commutative or associative. Define the terms $[x,y], [x^{-1},y^{-1}]$ and $[y^{-1},x]$ as the unique solutions to the equations
\[
xy = [x,y] \cdot (yx) \ , \ \ \ \ xy = (yx) \cdot [x^{-1},y^{-1}] \ , \ \ \ \ xy = y \cdot ([y^{-1},x]  \cdot x ).
\]
Define the terms $a_1(x,y,z)$ and $a_2(x,y,z)$ as the unique solutions to the equations
\[
(xy)z = x ((yz) \cdot a_1(x,y,z))  \ \ \ \ \   \mathrm{and} \ \ \ \ \  (xy)z = (x(yz)) \cdot a_2(x,y,z).    
\]
For a polynomial $t \in \Ab^{n}_{0} \textbf{L}$, we can define $d_t(x,y,x_2,\ldots,x_n)$ as the solution to 
\[
t(x\cdot y,x_2,\ldots,x_n) = (t(x,x_2,\ldots,x_n) \cdot t(y,x_2,\ldots,x_n)) \cdot d_t(x,y,x_2,\ldots,x_n).
\]   
We can actually solve for the terms defined above; for example, 
\[
[y^{-1},x] = (y \backslash (xy))/ x  \ , \ \  a_1(x,y,z) = (yz) \backslash (x \backslash ((xy)z)).
\]
By direct calculation, it is easy to see the following is true.

\begin{lemma}\label{lem:assoc}
With the above definitions: 
\begin{enumerate}

	\item $[x,y], [x^{-1},y^{-1}] ,[y^{-1},x] \in \Ab^{2}_{0} \textbf{L}$, 
	
	\item $a_1(x,y,z), a_2(x,y,z) \in \Ab^{3}_{0} \textbf{L}$,
	
	\item $d_t(x,y,x_2,\ldots,x_n) \in \Ab^{n+1}_{0} \textbf{L}$.

\end{enumerate}
\end{lemma}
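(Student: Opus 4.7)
The plan is to make each of the six terms explicit by solving its defining equation using the loop operations $\backslash$ and $/$, and then verify $0$-absorption in each coordinate by direct substitution, appealing repeatedly to the loop identities
\[
0\cdot x = x\cdot 0 = x, \quad x\backslash x = x/x = 0, \quad 0\backslash x = x/0 = x,
\]
together with the $0$-absorption of $t$ in part (3). No deeper machinery is required; the work is purely calculational.

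For (1), the defining equation $xy = [x,y]\cdot(yx)$ gives $[x,y] = (xy)/(yx)$. Setting $x=0$ yields $(0\cdot y)/(y\cdot 0) = y/y = 0$, and $y=0$ is symmetric. The other two terms are $[x^{-1},y^{-1}] = (yx)\backslash(xy)$ and $[y^{-1},x] = (y\backslash(xy))/x$; each collapses under $x=0$ or $y=0$ to an expression of the form $u\backslash u$ or $u/u$, which the loop identities immediately reduce to $0$.

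For (2), one solves for $a_1(x,y,z) = (yz)\backslash\bigl(x\backslash((xy)z)\bigr)$ and $a_2(x,y,z) = \bigl(x(yz)\bigr)\backslash\bigl((xy)z\bigr)$. Setting any one of $x,y,z$ to $0$ collapses both $(xy)z$ and $x(yz)$ to the same double product of the remaining two variables (for instance $x=0$ sends both to $yz$, and similarly for $y=0$ and $z=0$), so the outermost $\backslash$ is of the form $u\backslash u = 0$ in each case.

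For (3), solving the defining equation yields $d_t = \bigl(t(x,\bar{x})\cdot t(y,\bar{x})\bigr)\backslash t(xy,\bar{x})$, where $\bar{x}=(x_2,\ldots,x_n)$. Split into cases according to which coordinate is $0$: when $x=0$, the identity $0\cdot y = y$ together with $t(0,\bar{x})=0$ (since $t$ is $0$-absorbing) reduces $d_t$ to $t(y,\bar{x})\backslash t(y,\bar{x})=0$; the case $y=0$ is symmetric; if instead some $x_i=0$ with $i\geq 2$, then the $n$-ary $0$-absorption of $t$ forces all three $t$-evaluations to $0$, whence $d_t = 0\backslash 0 = 0$. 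The only point requiring any attention is choosing the correct side ($\backslash$ versus $/$) when inverting each defining equation; there is no substantive obstacle.
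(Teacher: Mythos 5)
Your proof is correct and fills in exactly the "direct calculation" the paper appeals to (the paper offers no written proof, stating only that the lemma is easy to see by direct calculation). Solving each defining equation for the term via $\backslash$ or $/$ and then substituting $0$ in each coordinate, using the loop identities and (for part (3)) the $0$-absorption of $t$, is precisely the intended argument.
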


Several other terms can be defined in a similar manner, and all are easily seen to be absorbing polynomials and can claim to ``measure'' the lack of associativity, commutativity or distributivity of absorbing polynomials. If the algebra is supernilpotent, the derived operations ``$\cdot,\circ,\ast$'' induce additional algebraic structures on the absorbing polynomials.

Assume $\textbf{A}$ is a $n$-supernilpotent Mal'cev algebra. For $k \in \mathds{N}$, denote the algebra $\textbf{A}_k = \left\langle \Ab^{k}_{0} \textbf{A}, \cdot , \backslash , / , \hat{0} \right\rangle$ of $k$-ary absorbing polynomial with the loop operations induced by $\textbf{L}_A$. Since $\textbf{A}$ is $n$-supernilpotent, $\textbf{A}_k$ is trivial for $k \geq n$. Using the degree n-1 representation of $\textbf{A}$ in Theorem \ref{thm:nabelian}(2), we see that in general $\textbf{A}_k$ is n-(k-1)-supernilpotent for $1 \leq k < n$. Using Lemma \ref{lem:distr} applied to the derived absorbing loop polynomials in Lemma \ref{lem:assoc} we can say more; for $\left\lceil \frac{n}{3} \right\rceil \leq k < n$, $\textbf{A}_k$ is term equivalent to a group, and for $\left\lceil \frac{n}{2} \right\rceil \leq k < n$ it is abelian. For $t \in \Ab^{m}_{0} \textbf{A}$ define the maps $\phi_t, \psi_t : \textbf{A}_{k} \rightarrow \textbf{A}_{\max\{k,m\}}$ by 
\begin{eqnarray*}
\phi_t(f) &=& (t \circ f )(x_1,\ldots, x_{\max\{k,m\}})= t(f(x_1,\ldots,x_{k}),x_2,\ldots,x_m) \\
\psi_t(f) &=& (f \circ t )(x_1,\ldots, x_{\max\{k,m\}})= f(t(x_1,\ldots,x_{m}),x_2,\ldots,x_k). 
\end{eqnarray*}

By calculating the arities of absorbing polynomials composing with the terms in Lemma \ref{lem:assoc} and using Lemma \ref{lem:distr}, we have the following general representations:
\begin{enumerate}

	\item Assume $m + 2k -1 \geq n$ and $k \geq m$;
	
		\begin{enumerate}
		
			\item If $\left\lceil \frac{n}{3} \right\rceil \leq k < n$, then $\phi_t \in \End(\textbf{A}_k)$;
			
			\item If $\left\lceil \frac{n}{2} \right\rceil \leq k < n$, then $t \mapsto \phi_t: \left\langle \Ab^{m}_{0} \textbf{A}, \cdot, \backslash, /, \circ \right\rangle \rightarrow \End(\textbf{A}_k)$ is a right-nearring homomorphism;
		
		\end{enumerate}

	\item For all $1 \leq m,k < n$, $\psi_t \in \End(\textbf{A}_k)$;  
		
		\begin{enumerate}
		
			\item Assume $k \geq m$. If $\left\lceil \frac{n}{2} \right\rceil \leq k < n$ and $2m + k-1 \geq n$, then $t \mapsto \psi_t: \left\langle \Ab^{m}_{0} \textbf{A}, \cdot, \backslash, /, \circ \right\rangle \rightarrow \End(\textbf{A}_k)$ is a right-nearring homomorphism;
		
		\end{enumerate}		
	
	\item If $m > k$, then $\psi_t \in \Hom (\textbf{A}_k,\textbf{A}_m)$ 
	
		\begin{enumerate}
		
			\item If $\left\lceil \frac{n}{2} \right\rceil \leq m < n$, then for $1 \leq k < n$ the map $t \mapsto \psi_t: \textbf{A}_m \rightarrow \Hom (\textbf{A}_k,\textbf{A}_m)$ is a group homomorphism;
		
			\item If $\left\lceil \frac{n}{2} \right\rceil \leq m < n$ and $2k + m-1 \geq n$, then $t \mapsto \phi_t: \textbf{A}_m \rightarrow \Hom (\textbf{A}_k,\textbf{A}_m)$ is a group homomorphism.
		
		\end{enumerate}

\end{enumerate}

These actions and homomorphisms suggest the following meta-problem: to what extant does the sequence of derived loops $\textbf{L}_A,\textbf{A}_1,\ldots, \textbf{A}_{n-1}$ and their representations play a role in understanding the structure of the $n$-supernilpotent Mal'cev algebra $\textbf{A}$? We end this section with the two previous characterizations illustrating the calculations and representations in (1)-(3) above.

\begin{example}
Suppose $\textbf{A}$ is an abelian Mal'cev algebra. Then from Theorem \ref{thm:nabelian}, $\textbf{A}$ is polynomially equivalent to a 2-type loop $\textbf{L}_A = \left\langle A, \cdot, \backslash, / , 0, F \right\rangle$ which determines a degree 1 representation
\[
f(x_1,\ldots,x_n) = c \cdot r_1 = c \cdot \left( ( \cdots (t_1(x_1) \cdot t_2(x_2)) \cdots ) \cdot t_n(x_n) \right) 
\]
for $f \in \Pol_n \textbf{A}$. The operations of $\textbf{L}_A$ induce a loop structure $\textbf{A}_1$. Since $\textbf{A}$ is abelian, $\Ab^{k}_{0} \textbf{A}=\{\hat{0}\}$ for $k \geq 2$. Then associativity and commutativity of ``$\cdot$'' follows since the terms in Lemma \ref{lem:assoc}(1)-(2) are trivial; thus, the loop structures are term equivalent to abelian groups $\left\langle A, +, - , 0 \right\rangle$ and $\left\langle \Ab^{1}_{0} \textbf{A}, + , - , \hat{0} \right\rangle$ where $-x = x \backslash 0 = 0 / x$. Distributivity of ``$\circ$'' and ``$\ast$'' over ``$+$'' follows since $d_t(x,y)=\hat{0}$; therefore, $\textbf{R}_A = \left\langle \Ab^{1}_{0} \textbf{A}, +, -, 0, \circ \right\rangle$ is a ring and ``$\ast$'' makes $\left\langle A, + , -, 0 \right\rangle$ into a $\textbf{R}_A$-module where the degree 1 representation explicitly describes the module polynomials.
\end{example}

\begin{example}\label{ex:3abel}
Suppose $\textbf{A}$ is a 3-supernilotent Mal'cev algebra. Then $\Ab^{k}_{0} \textbf{A} = \{\hat{0}\}$ for $k \geq 3$. From Theorem \ref{thm:nabelian}, $\textbf{A}$ is polynomially equivalent to a 3-type loop $\textbf{L}_A = \left\langle A, \cdot, \backslash, / , 0, F \right\rangle$ where $F = \Ab^{1}_{0} \textbf{A} \cup \Ab^{2}_{0} \textbf{A}$.  The operations of $\textbf{L}_A$ define loops $\textbf{A}_1$ and $\textbf{A}_2$. We no longer have commutativity, but associativity of ``$\cdot$'' follows from $a_1(x,y,z)=\hat{0}$; therefore, the loops $\left\langle A, \cdot, \backslash, / , 0, \right\rangle$ and $\textbf{A}_1$ are term equivalent to the groups $\left\langle A, \cdot, ^{-1}, 0 \right\rangle$ and  $\left\langle \Ab^{1}_{0} \textbf{A}, \cdot , ^{-1} , \hat{0} \right\rangle$. Then the expanded group $\left\langle A, \cdot, ^{-1}, 0, F \right\rangle$ is of the type in \cite[Thm 3.3]{mudrinski} by Theorem \ref{thm:nabelian}(3). 

There is additional structure. Since we always have right-distributivity of ``$\circ$'' over ``$\cdot$'', the structure $\left\langle \Ab^{1}_{0} \textbf{A}, \cdot , ^{-1} , \hat{0}, \circ \right\rangle$ is a right-nearring. For $f,g \in \Ab^{2}_{0} \textbf{A}$, $([f(x,y),g(x,y)], 0) \in [[1,1],[1,1]]=0_A$ for all $x,y \in A$ which implies $[f,g]=\hat{0}$; thus, together with $a_1(x,y,z)=\hat{0}$ we see that the loop $\textbf{A}_2$ is term equivalent to the abelian group $\left\langle \Ab^{2}_{0} \textbf{A}, + , - , \hat{0} \right\rangle$. Note the composition ``$\circ$'' is trivial in $\Ab^{2}_{0} \textbf{A}$.  

For $f,g \in \Ab^{2}_{0} \textbf{A}$ and $t \in \Ab^{1}_{0} \textbf{A}$, we see that $d_t(f,g)=\hat{0}$ in Lemma \ref{lem:assoc}(3) which implies that $\phi_t(f)$ determines an action via $\circ$ of the right-nearring $\left\langle \Ab^{1}_{0} \textbf{A}, \cdot , ^{-1} , \hat{0}, \circ \right\rangle$ on the abelian group $\left\langle \Ab^{2}_{0} \textbf{A}, + , - , \hat{0} \right\rangle$. For fixed $t \in \Ab^{1}_{0} \textbf{A}$, the action implies the map $\Ab^{2}_{0} \textbf{A} \ni f \longmapsto \phi_t(f) \in \Ab^{2}_{0} \textbf{A}$ is an endomorphism of the abelian group; therefore, $\phi_t \in \End \left\langle \Ab^{2}_{0} \textbf{A}, + , - , \hat{0} \right\rangle$ which is naturally a right-nearring under addition and composition of endomorphisms. Then the map $t \longmapsto \phi_t: \left\langle \Ab^{1}_{0} \textbf{A}, \cdot , ^{-1} , \hat{0}, \circ \right\rangle \rightarrow \End \left\langle \Ab^{2}_{0} \textbf{A}, + , - , \hat{0} \right\rangle$ is a right-nearring homomorphism. Similarly, pre-composition 
\[
t \longmapsto \psi_t: \left\langle \Ab^{1}_{0} \textbf{A}, \cdot , ^{-1} , \hat{0}, \circ \right\rangle \rightarrow \End \left\langle \Ab^{2}_{0} \textbf{A}, + , - , \hat{0} \right\rangle
\]
is also a right-nearring homomorphism.   
\end{example}

\section{Neutrality and Weak Difference Terms}\label{sec:neutral}

The $n$-commutator is \textit{neutral} in an algebra $\textbf{A}$ if $[\alpha_1,\ldots,\alpha_n] = \alpha_1 \wedge \cdots \wedge \alpha_n$ for all $\{\alpha_1,\ldots,\alpha_n\} \subseteq \Con \textbf{A}$. For a variety $\mathcal V$, the $n$-commutator is said to be neutral if it is neutral in every algebra in the variety. The next lemma is a standard result for the binary commutator in arbitrary varieties and can be argued in exactly the same manner for the higher commutators.

\begin{lemma}\label{lem:abel}
Let $\mathcal V$ be a variety. The following are equivalent for $n \geq 2$:
\begin{enumerate}

	\item $[\theta(x,y),\ldots,\theta(x,y)]_{n} < \theta(x,y)$ in $\Con \textbf{F}_{\mathcal V}(2)$;
	
	\item $\mathcal V$ contains an algebra with a proper $n$-supernilpotent interval in its congruence lattice.   

\end{enumerate}
\end{lemma}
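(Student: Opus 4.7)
The plan is to mirror the standard binary argument, using only the arbitrary-variety properties (HC1)--(HC3) together with the quotient rule from Lemma~\ref{lem:cen}(4) and a direct monotonicity of the centralizer relation under homomorphisms. Throughout, let $\theta:=\theta(x,y)$ in $\textbf{F}_{\mathcal V}(2)$.

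For $(1)\Rightarrow(2)$, I would set $\gamma:=[\theta,\ldots,\theta]_{n}$ and form $\textbf{B}:=\textbf{F}_{\mathcal V}(2)/\gamma$. By (HC1) we have $\gamma\le\theta$, so Lemma~\ref{lem:cen}(4) applies and transfers the tautology $\textbf{F}_{\mathcal V}(2)\vDash\mathrm{C}(\theta,\ldots,\theta;\gamma)$ to $\textbf{B}\vDash\mathrm{C}(\theta/\gamma,\ldots,\theta/\gamma;0_{B})$. Hence $[\theta/\gamma,\ldots,\theta/\gamma]_{n}=0_{B}$, while $\theta/\gamma>0_{B}$ since $\gamma<\theta$ by hypothesis. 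The interval $[0_{B},\theta/\gamma]$ in $\Con\textbf{B}$ is then a proper $n$-supernilpotent interval in an algebra of $\mathcal V$.

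For $(2)\Rightarrow(1)$, I would take $\textbf{A}\in\mathcal V$ and $\beta<\alpha$ in $\Con\textbf{A}$ with $[\alpha,\ldots,\alpha]_{n}\le\beta$, pick $(a,b)\in\alpha\setminus\beta$, and consider the homomorphism $\phi\colon\textbf{F}_{\mathcal V}(2)\to\textbf{A}$ determined by $\phi(x)=a$, $\phi(y)=b$. Since $(\phi(x),\phi(y))\in\alpha$, we have $\theta\le\phi^{-1}(\alpha)$. A direct check from the definition of the centralizer shows that $\textbf{A}\vDash\mathrm{C}(\alpha,\ldots,\alpha;\beta)$ implies $\textbf{F}_{\mathcal V}(2)\vDash\mathrm{C}(\theta,\ldots,\theta;\phi^{-1}(\beta))$: every witnessing polynomial-and-tuple configuration in the free algebra is sent by $\phi$ to a witnessing configuration in $\textbf{A}$, and the conclusion modulo $\beta$ pulls back to the conclusion modulo $\phi^{-1}(\beta)$. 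Therefore $[\theta,\ldots,\theta]_{n}\le\phi^{-1}(\beta)$; if this were all of $\theta$, then $(x,y)\in\phi^{-1}(\beta)$ would force $(a,b)\in\beta$, a contradiction.

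The main obstacle is the monotonicity step used in $(2)\Rightarrow(1)$: pulling back the centralizer relation along $\phi$ is not literally one of the items of Lemma~\ref{lem:cen} and needs a brief unfolding of the definition against the homomorphism. Once this routine fact is in hand, both directions go through without invoking Mal'cev-specific properties such as (HC4)--(HC8), which is precisely why the lemma is available in arbitrary varieties for every $n\ge 2$.
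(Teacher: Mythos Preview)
Your proposal is correct. The paper omits the proof entirely, merely noting that the result ``is a standard result for the binary commutator in arbitrary varieties and can be argued in exactly the same manner for the higher commutators,'' and your argument is precisely the standard one; note that for $(1)\Rightarrow(2)$ you could also simply take the interval $([\theta,\ldots,\theta]_n,\theta)$ in $\textbf{F}_{\mathcal V}(2)$ itself as the witness without passing to the quotient.
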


\begin{lemma}\label{lem:abelinterv}
Suppose $\textbf{A}$ is a Mal'cev algebra and $(\alpha,\beta)$ is an $n$-supernilpotent interval in $\Con \textbf{A}$. Then for every cover $\alpha \leq \gamma \prec \sigma \leq \beta$, $(\gamma, \sigma)$ is an abelian interval. 
\end{lemma}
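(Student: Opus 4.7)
The plan is to reduce to the case $\alpha = 0_A$ and then argue by contradiction using the characterization of $n$-supernilpotence in a Mal'cev setting as iterated binary commutators. The key observation is that the supernilpotence of the whole interval $(\alpha,\beta)$ is inherited by every subinterval.

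First, I would quotient by $\alpha$ and work in $\textbf{A}/\alpha$. By (HC6), $[\beta/\alpha,\ldots,\beta/\alpha]_{n} = ([\beta,\ldots,\beta]_{n} \vee \alpha)/\alpha = \alpha/\alpha = 0$, so $\beta/\alpha$ is $n$-supernilpotent in $\textbf{A}/\alpha$, and the cover $\gamma/\alpha \prec \sigma/\alpha$ in $[\alpha,\beta]/\alpha$ corresponds bijectively under the correspondence theorem to $\gamma \prec \sigma$. Hence it suffices to prove the statement under the assumption $\alpha = 0_A$, i.e.\ $[\beta,\ldots,\beta]_{n}=0_A$, and by (HC2) we then have $[\sigma,\ldots,\sigma]_{n}=0_A$ for any $\sigma \leq \beta$.

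Fix now a cover $\gamma \prec \sigma \leq \beta$ and suppose for contradiction that $(\gamma,\sigma)$ is not abelian, i.e.\ $[\sigma,\sigma] \not\leq \gamma$. Since $\gamma \prec \sigma$ is a cover, $[\sigma,\sigma] \vee \gamma = \sigma$. Passing to $\textbf{A}/\gamma$ and applying (HC6) to the binary commutator gives
\[
[\sigma/\gamma,\sigma/\gamma] \;=\; ([\sigma,\sigma] \vee \gamma)/\gamma \;=\; \sigma/\gamma.
\]
Iterating this observation, for every $k \geq 1$ the $k$-fold nested binary commutator $[\sigma/\gamma,[\sigma/\gamma,\ldots,[\sigma/\gamma,\sigma/\gamma]\cdots]]$ equals $\sigma/\gamma$.

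Now I would invoke the chain of inequalities (HC8) displayed right after the axioms in Section~\ref{sec:prelim}: in a Mal'cev variety,
\[
[\sigma/\gamma,\ldots,\sigma/\gamma]_{n} \;\geq\; [\sigma/\gamma,[\sigma/\gamma,\cdots[\sigma/\gamma,\sigma/\gamma]\cdots]]
\]
with $n-1$ nested binary commutators on the right. Combined with the previous paragraph, this forces $[\sigma/\gamma,\ldots,\sigma/\gamma]_{n} \geq \sigma/\gamma > 0$. On the other hand, (HC6) applied to $[\sigma,\ldots,\sigma]_{n}=0_A \leq \gamma$ yields $[\sigma/\gamma,\ldots,\sigma/\gamma]_{n}=0$, a contradiction. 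Hence $[\sigma,\sigma] \leq \gamma$, as required.

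The argument is essentially a bookkeeping exercise in the commutator axioms, so I do not anticipate a serious obstacle. The only point that requires care is to confirm that the supernilpotence hypothesis on the interval $(\alpha,\beta)$ is meant (as elsewhere in the paper) in the form $[\beta,\ldots,\beta]_{n}\leq\alpha$ and that $\gamma$ lies above $\alpha$, so that (HC6) applies cleanly when factoring through $\gamma$; once this is in place, the implications follow mechanically from (HC2), (HC6), and the iterated (HC8) inequality.
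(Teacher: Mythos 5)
Your proposal is correct and follows essentially the same route as the paper: quotient by $\gamma$, observe that $[\sigma/\gamma,\ldots,\sigma/\gamma]_n = 0_{A/\gamma}$, and use the iterated (HC8) inequality together with the cover $0_{A/\gamma}\prec\sigma/\gamma$ to force $[\sigma/\gamma,\sigma/\gamma]=0_{A/\gamma}$. The preliminary reduction to $\alpha=0_A$ is an unnecessary extra step — the paper gets $[\sigma,\ldots,\sigma]_n\leq\alpha\leq\gamma$ directly from (HC2) and (HC5) and passes to $A/\gamma$ in one move — but it is harmless.
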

\begin{proof}
By assumption, $\textbf{A} \vDash C(\beta,\ldots,\beta;\alpha)$ and so $[\sigma,\ldots,\sigma]_n \leq [\beta,\ldots,\beta]_n \leq \alpha \leq \gamma$ by (HC2). Then by (HC5) we have $\textbf{A} \vDash C(\sigma,\ldots,\sigma;\gamma)$ and so $\textbf{A}/\gamma \vDash C(\sigma/\gamma,\ldots,\sigma/\gamma;0_{A/\gamma})$. Note $0_{A/\gamma} \prec \sigma/\gamma$. Then using (HC8) on the iterated binary commutators
\begin{eqnarray*}
[\sigma/\gamma,[\sigma/\gamma,\cdots,[\sigma/\gamma,\sigma/\gamma]\cdots ]] \leq [\sigma/\gamma,\ldots,\sigma/\gamma]_{n} &=& 0_{A/\gamma} \\
&\leq& [\sigma/\gamma,\ldots,\sigma/\gamma]_{k} \\
&\leq& \sigma/\gamma
\end{eqnarray*} 
for $2 \leq k < n$ implies $[\sigma/\gamma,\sigma/\gamma]=0_{A/\gamma}$. This yields $\textbf{A}/\gamma \vDash C(\sigma/\gamma,\sigma/\gamma;0_{A/\gamma})$ which implies $\textbf{A} \vDash C(\sigma,\sigma;\gamma)$ by Lemma \ref{lem:cen}(4).
\end{proof}

If we no longer assume $\textbf{A}$ has a Mal'cev term but is a finite algebra, then we can still conclude every cover in a supernilpotent interval is abelian.

\begin{proposition}\label{prop:tct}
Suppose $\textbf{A}$ is a finite algebra and $(\alpha,\beta)$ a supernilpotent interval in $\Con \textbf{A}$. Then for every cover $\alpha \leq \gamma \prec \sigma \leq \beta$, $(\gamma, \sigma)$ is an abelian interval.
\end{proposition}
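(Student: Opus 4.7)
The plan is to argue by contradiction using tame congruence theory, replacing the Mal'cev manipulation of Lemma \ref{lem:abelinterv} with structural input about non-abelian covers in finite algebras. Suppose for contradiction that some $\gamma \prec \sigma$ in $[\alpha,\beta]$ is not abelian. Since $(\alpha,\beta)$ is $n$-supernilpotent for some $n$, monotonicity (HC2) yields $[\sigma,\ldots,\sigma]_n \leq [\beta,\ldots,\beta]_n \leq \alpha \leq \gamma$; the aim is to contradict this by producing, for this same $n$, an $n$-ary polynomial of $\textbf{A}$ witnessing $[\sigma,\ldots,\sigma]_n \not\le \gamma$ via the centralizer definition.

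Because $\textbf{A}$ is finite, the cover $\gamma \prec \sigma$ has a well-defined type in the sense of Hobby--McKenzie tame congruence theory, and non-abelianness rules out types $1$ and $2$, leaving types $3$, $4$ or $5$. On a $\langle\gamma,\sigma\rangle$-minimal set I may fix a two-element trace $\{0,1\}$ with $(0,1) \in \sigma \setminus \gamma$, together with a binary polynomial of $\textbf{A}$ whose action on the trace is an idempotent lattice or semilattice operation. Non-abelianness of the cover also supplies a binary polynomial $p(\textbf{x},\textbf{y})$ of $\textbf{A}$ and tuples $\textbf{a} \equiv_\sigma \textbf{b}$, $\textbf{c} \equiv_\sigma \textbf{d}$ with $p(\textbf{a},\textbf{c}) \equiv_\gamma p(\textbf{a},\textbf{d})$ but $p(\textbf{b},\textbf{c}) \not\equiv_\gamma p(\textbf{b},\textbf{d})$; by the standard TCT pre- and post-composition with unary idempotent polynomials of the minimal set, I may further assume the essential change $\textbf{a} \to \textbf{b}$ is read off in the trace as the flip $0 \leftrightarrow 1$.

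The key construction is then an $n$-ary polynomial $F$ of $\textbf{A}$ witnessing $[\sigma,\ldots,\sigma]_n \not\le \gamma$. Iterating the idempotent trace operation, I would build $g(\textbf{x}_1,\ldots,\textbf{x}_{n-1})$ whose output on inputs drawn from $\{\textbf{a},\textbf{b}\}$ equals $\textbf{b}$ exactly when every $\textbf{x}_i = \textbf{b}$ and equals $\textbf{a}$ otherwise --- for types $3$ and $4$ this is an iterated meet in the induced lattice on the trace, and for type $5$ it is an iterated join once $\textbf{a}$ and $\textbf{b}$ are labelled consistently with the semilattice ordering. Setting $F := p(g(\textbf{x}_1,\ldots,\textbf{x}_{n-1}),\textbf{y})$, the binary witness behaviour at $\textbf{a}$ forces the centralizer hypothesis (iii) to hold at every $(\textbf{z}_1,\ldots,\textbf{z}_{n-1}) \ne (\textbf{b},\ldots,\textbf{b})$, while the binary violation at $\textbf{b}$ forces the conclusion to fail at $(\textbf{b},\ldots,\textbf{b})$. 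Hence $[\sigma,\ldots,\sigma]_n \not\le \gamma$, the desired contradiction.

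The main obstacle is the clean construction of $g$ from the trace-level idempotent operation in the type-$5$ case: only one idempotent operation (a join) is available and one must choose the labelling of $\textbf{a}$ versus $\textbf{b}$ so that the iterated join selects $\textbf{b}$ exactly on the all-$\textbf{b}$ tuple and otherwise selects $\textbf{a}$. A secondary point is that the binary witness $p$ for $[\sigma,\sigma] \not\le \gamma$ must be chosen so that its essential arguments lie inside the chosen trace rather than scattered across $A$, which is precisely what the theory of $\langle\gamma,\sigma\rangle$-minimal sets and their traces is designed to deliver.
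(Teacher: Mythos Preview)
Your overall strategy matches the paper's: argue by contradiction, use tame congruence theory to place a non-abelian cover in types $3$, $4$, or $5$, extract a two-element trace $\{0,1\}$ with $(0,1)\in\sigma\setminus\gamma$ and a semilattice polynomial $t$, and iterate $t$ to produce an $n$-ary witness against $\mathrm{C}(\sigma,\ldots,\sigma;\gamma)$. The difference is that you interpose a \emph{separate} binary witness $p$ of $[\sigma,\sigma]\not\le\gamma$ and then compose it with an $(n{-}1)$-ary iterated trace operation $g$, whereas the paper uses the iterated semilattice $h(x_1,\ldots,x_n)=t(x_1,t(x_2,\cdots t(x_{n-1},x_n)\cdots))$ \emph{directly} as the $n$-ary witness. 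Since $0$ is absorbing for $t$, one has $h(\bar u,0)=0=h(\bar u,1)$ literally for every $\bar u\in\{0,1\}^{n-1}\setminus\{(1,\ldots,1)\}$, while $h(1,\ldots,1,0)=0$ and $h(1,\ldots,1,1)=1$; the centralizer definition then forces $(0,1)\in[\sigma,\ldots,\sigma]_n\le\alpha\le\gamma$, the desired contradiction.

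This direct route dissolves both obstacles you flagged. The type-$5$ labelling problem vanishes because $0$ and $1$ are chosen from the semilattice structure itself, with $0$ absorbing by construction, rather than inherited from an independently obtained binary failure of $\mathrm{C}(\sigma,\sigma;\gamma)$ whose orientation might conflict with the semilattice order. Likewise there is no need to massage a generic centralizer failure so that its essential arguments land in the trace, since no such generic failure is ever invoked. In effect your $p$ may simply be taken to be $t$ itself (with $\textbf{a}=0$, $\textbf{b}=1$, $\textbf{c}=0$, $\textbf{d}=1$), at which point $F=p(g(\cdot),y)$ collapses to the paper's $h$.
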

\begin{proof}
Suppose $(\alpha,\beta)$ is a $n$-supernilpotent interval in $\Con \textbf{A}$ and $\alpha \leq \gamma \prec \sigma \leq \beta$. Then $\textbf{A} \vDash C(\underbrace{\sigma,\ldots,\sigma}_{n};\alpha)$ since $\sigma \leq \beta$. In the language of tame congruence theory \cite{TCT}, we shall show type$(\gamma,\sigma) \in \{1,2\}$.

For a contradiction, assume type$(\gamma,\sigma) \in \{3,4,5\}$. Fix a minimal set $U \in M(\gamma,\sigma)$ and its unique trace $N$. If type$(\gamma,\sigma) \in \{3,4\}$, then for the induced algebra we have $\textbf{A}_N = \left\langle \{0,1\}, (\Pol \textbf{A})_N \right\rangle$ with $(0,1) \in \sigma - \gamma$ and $\textbf{A}_N$ polynomially equivalent to either a Boolean algebra (type 3) or a lattice (type 4). If type$(\gamma,\sigma)=5$, then $N/\gamma = \{0/\gamma,1/\gamma\}$ for two elements $\{0,1\} \subseteq A$ and $\textbf{A}/\gamma_{N/\gamma}$ is polynomially equivalent to a semilattice with $1/\gamma=\{1\}$ and minimal element $\perp=0/\gamma$; in addition, for any $a \in 0/\gamma$ there is a polynomial $t$ such that $\left\langle \{a,1\},t\restriction_{\{a,1\}} \right\rangle$ is a semilattice with minimal element $\perp=a$.

In all cases, we can find a polynomial $t$ of $\textbf{A}$ and two elements $\{0,1\}$ such that $(0,1) \in \sigma - \gamma$ and $t$ restricted to $\{0,1\}$ is a semilattice operation with minimal element $\perp=0$. Define the polynomial $h(x_1,\ldots,x_n)=t(x_1,t(x_2,\cdots, t(x_{n-1},x_n) \cdots))$. Using the semilattice identities, for all $\bar{u} \in \prod_{i=1}^{n-1} \{0,1\} \, \backslash \, \{1,\ldots,1\}$ we see that $h(\bar{u},0) = 0 = h(\bar{u},1)$ because some coordinate in $\bar{u}$ is $0$. Since $h$ is still a polynomial of $\textbf{A}$ we conclude that $0=h(1,\ldots,1,0) \, [\sigma,\ldots,\sigma]_n \, h(1,\ldots,1,1)=1$; thus, $(0,1) \in [\sigma,\ldots,\sigma]_{n} \leq \alpha \leq \gamma$, a contradiction. It must be that type$(\gamma,\sigma) \in \{1,2\}$.
\end{proof}

\begin{corollary}\label{cor:supmal}
Let $\textbf{A}$ be a finite algebra. Every supernilpotent congruence is solvable; consequently, if $\textbf{A}$ is a supernilpotent Taylor algebra, then $\textbf{A}$ is a nilpotent Mal'cev algebra.
\end{corollary}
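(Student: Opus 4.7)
The plan is to derive both parts of the corollary from Proposition \ref{prop:tct} combined with standard tame congruence theory.

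For the first claim, assume $\theta \in \Con \textbf{A}$ is supernilpotent. Then the interval $[0_A,\theta]$ is a supernilpotent interval of $\Con \textbf{A}$, and Proposition \ref{prop:tct} yields that every prime quotient in this interval is abelian. Translated into the language of TCT, every such prime quotient has type in $\{\mathrm{\mathbf{1}},\mathrm{\mathbf{2}}\}$. By the standard TCT characterization of solvability \cite{TCT}, a congruence is solvable precisely when all of its prime quotients have type in $\{\mathrm{\mathbf{1}},\mathrm{\mathbf{2}}\}$; hence $\theta$ is solvable.

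For the second claim, suppose in addition that $\textbf{A}$ is a (finite) Taylor algebra. By Taylor's theorem together with its TCT refinement, the variety generated by a finite Taylor algebra omits type $\mathrm{\mathbf{1}}$, so $\mathrm{typ}\{\mathcal{V}(\textbf{A})\} \subseteq \{\mathrm{\mathbf{2}},\mathrm{\mathbf{3}},\mathrm{\mathbf{4}},\mathrm{\mathbf{5}}\}$. Applying the first part to $\theta = 1_A$ gives $\mathrm{typ}\{0_A,1_A\} \subseteq \{\mathrm{\mathbf{1}},\mathrm{\mathbf{2}}\}$, and since $\textbf{A}$ is finite the typeset of $\mathcal{V}(\textbf{A})$ coincides with the typeset of $\textbf{A}$ itself; intersecting the two containments forces $\mathrm{typ}\{\mathcal{V}(\textbf{A})\} = \{\mathrm{\mathbf{2}}\}$. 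By the TCT omitting types theorem, omitting $\{\mathrm{\mathbf{1}},\mathrm{\mathbf{3}},\mathrm{\mathbf{4}},\mathrm{\mathbf{5}}\}$ is equivalent to generating a congruence permutable variety, so $\textbf{A}$ is a Mal'cev algebra. Finally, as observed in Section \ref{sec:prelim} after property (HC8), any $n$-supernilpotent algebra in a Mal'cev variety is nilpotent of class $n-1$, so $\textbf{A}$ is a nilpotent Mal'cev algebra.

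There is no real obstacle here — the argument is bookkeeping that translates Proposition \ref{prop:tct} through the standard TCT dictionary between abelian prime quotients, solvability, and Mal'cev-type omitting-types theorems. The only point requiring mild care is the passage from the typeset of the finite algebra $\textbf{A}$ to that of $\mathcal{V}(\textbf{A})$, which is automatic in the locally finite setting and lets us invoke the omitting types theorem for the variety rather than for $\textbf{A}$ alone.
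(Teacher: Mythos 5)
Your argument for the first claim is sound, though it takes a slightly different route from the paper. Where you invoke the TCT dictionary (abelian prime quotients have type in $\{\mathbf{1},\mathbf{2}\}$, and a finite algebra with $\mathrm{typ}\{0_A,\theta\}\subseteq\{\mathbf{1},\mathbf{2}\}$ has $\theta$ solvable), the paper instead constructs the derived series directly: Proposition~\ref{prop:tct} gives $[\alpha,\alpha]\leq\bigwedge_{\sigma\prec\alpha}\sigma<\alpha$, and since $(0,[\alpha,\alpha])$ is again a supernilpotent interval (by (HC2) and (HC3)) the same argument applies to produce a strictly descending chain $\alpha>[\alpha,\alpha]>[\alpha]^2>\cdots$, which must terminate at $0$ by finiteness. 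The paper's version is a bit more self-contained, since it never needs the equivalence of TC-solvability with the TCT relation $\overset{s}{\sim}$; both are fine.

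The proof of the second claim has a genuine gap. The sentence ``since $\textbf{A}$ is finite the typeset of $\mathcal{V}(\textbf{A})$ coincides with the typeset of $\textbf{A}$ itself'' is false: $\mathrm{typ}\{\mathcal{V}(\textbf{A})\}$ is the union of the typesets of all finite members of $\mathcal{HSP}(\textbf{A})$, and prime quotients of types not witnessed in $\textbf{A}$ can appear in subalgebras and quotients of finite powers. So you cannot conclude $\mathrm{typ}\{\mathcal{V}(\textbf{A})\}\subseteq\{\mathbf{1},\mathbf{2}\}$ from $\mathrm{typ}\{\textbf{A}\}\subseteq\{\mathbf{1},\mathbf{2}\}$. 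Moreover, even granting $\mathrm{typ}\{\mathcal{V}(\textbf{A})\}=\{\mathbf{2}\}$, there is no clean omitting-types theorem saying ``$\mathrm{typ}\subseteq\{\mathbf{2}\}$ iff congruence permutable'': the HM characterizations of congruence modularity and distributivity require, in addition to a typeset restriction, that the relevant minimal sets have empty tails, and type-$\mathbf{2}$ minimal sets can have nonempty tails. So the route through the variety's typeset would need additional work. The paper avoids both issues by using the relevant part of \cite[Thm.9.6]{TCT} directly: a Taylor variety has an idempotent ternary term that restricts to a Mal'cev operation on every block of every (locally) solvable congruence. Since your first part shows $1_A$ is solvable, the unique $1_A$-block is all of $A$, so that term is already a Mal'cev term for $\textbf{A}$; nilpotence then follows from the observation after (HC8) that an $n$-supernilpotent Mal'cev algebra is nilpotent of class $n-1$.
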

\begin{proof}
Assume $\textbf{A}$ is a finite algebra and $\alpha \in \Con \textbf{A}$ is a supernilpotent congruence for some $n \geq 2$. Then by Proposition \ref{prop:tct}, $\textbf{A} \vDash C(\alpha,\alpha;\sigma)$ for all co-atoms $\sigma \prec \alpha$; thus, $[\alpha,\alpha] \leq \bigwedge_{\sigma \prec 1} \sigma < \alpha$. Since $(0,[\alpha,\alpha])$ is also a supernilpotent interval, Proposition \ref{prop:tct} again shows $[[\alpha,\alpha],[\alpha,\alpha]] \leq \bigwedge_{0 \leq \sigma \prec [\alpha,\alpha]} \sigma < [\alpha,\alpha]$ provided $[\alpha,\alpha] \neq 0$. Inductively, we have a descending derived series $\alpha > [\alpha,\alpha] > [\alpha]^2 > \cdots $. Since $\textbf{A}$ is finite, some $[\alpha]^k=0$ which shows $\alpha$ is solvable. 

If $\textbf{A}$ is a Taylor algebra and supernilpotent, then the first part applied to $1_A$ shows $\textbf{A}$ is solvable. According to \cite[Thm.9.6]{TCT}, $\textbf{A}$ interprets a Mal'cev term; thus, $\textbf{A}$ is nilpotent because it is a supernilpotent Mal'cev algebra. 
\end{proof}

\begin{theorem}\label{thm:neutral}
Let $\mathcal V$ be a variety in which the 2-generated free algebra $\textbf{F}_{\mathcal V}(2)$ is finite. The following are equivalent:
\begin{enumerate}
	
	\item $\mathcal V$ is congruence meet-semidistributive;
	
	\item $\mathcal V$ is $n$-commutator neutral for all $n \geq 2$;
	
	\item $\mathcal V$ is $n$-commutator neutral for some $n \geq 2$.

\end{enumerate}
\end{theorem}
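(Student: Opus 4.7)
The plan is to prove the cycle $(1)\Rightarrow(2)\Rightarrow(3)\Rightarrow(1)$. The middle implication is trivial (pick $n=2$, or any $n$), so the content is in the two remaining arrows. Of these, $(3)\Rightarrow(1)$ is essentially a direct deduction from the arbitrary-algebra properties (HC1) and (HC3) together with the Kearnes--Szendrei result cited in the introduction, while $(1)\Rightarrow(2)$ is where the finiteness of $\mathbf{F}_{\mathcal{V}}(2)$ and Proposition \ref{prop:tct} come in.

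For $(3)\Rightarrow(1)$, assume the $n$-commutator is neutral in $\mathcal{V}$ for some fixed $n\geq 2$. Let $\mathbf{A}\in\mathcal{V}$ and $\alpha,\beta\in\Con\mathbf{A}$. Applying property (HC3) $n-2$ times (dropping one $\alpha$ from the left at each step) gives
\[
[\underbrace{\alpha,\ldots,\alpha}_{n-1},\beta]_n \;\leq\; [\underbrace{\alpha,\ldots,\alpha}_{n-2},\beta]_{n-1} \;\leq\; \cdots \;\leq\; [\alpha,\beta]_2.
\]
By the assumed neutrality, the left-hand side equals $\alpha\wedge\beta$. Combined with (HC1), which yields $[\alpha,\beta]\leq\alpha\wedge\beta$, we obtain $[\alpha,\beta]=\alpha\wedge\beta$ in every $\mathbf{A}\in\mathcal{V}$. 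Thus the binary term-condition commutator is neutral in $\mathcal{V}$, and the equivalence of binary neutrality with congruence meet-semidistributivity from \cite{twocommutator} (already recorded in the introduction) delivers (1). Note that this step does not use the finiteness hypothesis.

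For $(1)\Rightarrow(2)$, suppose toward contradiction that $\mathcal{V}$ is congruence meet-semidistributive but some $n$-commutator ($n\geq 2$) is not neutral in $\mathcal{V}$. Then there exist $\mathbf{A}\in\mathcal{V}$ and $\alpha_1,\ldots,\alpha_n\in\Con\mathbf{A}$ with $[\alpha_1,\ldots,\alpha_n]<\alpha_1\wedge\cdots\wedge\alpha_n$. Setting $\alpha:=[\alpha_1,\ldots,\alpha_n]$ and $\beta:=\alpha_1\wedge\cdots\wedge\alpha_n$, property (HC2) yields $[\beta,\ldots,\beta]_n\leq[\alpha_1,\ldots,\alpha_n]=\alpha$, so $(\alpha,\beta)$ is a proper $n$-supernilpotent interval. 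By Lemma \ref{lem:abel}, the congruence $\theta(x,y)\in\Con\mathbf{F}_{\mathcal{V}}(2)$ then satisfies $[\theta,\ldots,\theta]_n<\theta$, producing a proper $n$-supernilpotent interval in $\mathbf{F}_{\mathcal{V}}(2)$. Because $\mathbf{F}_{\mathcal{V}}(2)$ is finite, this interval contains a cover $\gamma\prec\sigma$, and Proposition \ref{prop:tct} asserts that $(\gamma,\sigma)$ is abelian, i.e., $[\sigma,\sigma]\leq\gamma$. However, congruence meet-semidistributivity of $\mathcal{V}$ is equivalent to binary neutrality, whence $[\sigma,\sigma]=\sigma$ in $\mathbf{F}_{\mathcal{V}}(2)$; combined with $\gamma<\sigma$ this contradicts $[\sigma,\sigma]\leq\gamma$.

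The only genuine obstacle is the step invoking Proposition \ref{prop:tct}, which requires finiteness to run the tame congruence theory argument that rules out TCT types $3,4,5$ in a supernilpotent interval; everything else is congruence-arithmetic manipulation using (HC1)--(HC3) and the already-cited equivalence of binary neutrality with $\mathrm{SD}_\wedge$. This is precisely why the hypothesis on $\mathbf{F}_{\mathcal{V}}(2)$ is presently necessary and why the general conjecture stated in the introduction remains open.
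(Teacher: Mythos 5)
Your proof is correct and follows the same route as the paper: trivial $(2)\Rightarrow(3)$, reduction of $(3)\Rightarrow(1)$ to binary neutrality via (HC1) and (HC3) plus the cited Kearnes--Szendrei equivalence, and for $(1)\Rightarrow(2)$ the passage through Lemma \ref{lem:abel} to $\mathbf{F}_{\mathcal V}(2)$, followed by extracting a cover from the proper $n$-supernilpotent interval (using finiteness) and applying Proposition \ref{prop:tct} to produce an abelian cover contradicting meet-semidistributivity. The only cosmetic difference is that you take an arbitrary cover $\gamma\prec\sigma$ inside the interval and spell out the contradiction through binary neutrality, whereas the paper fixes the cover at the bottom and simply cites that abelian covers are impossible in $\mathrm{SD}_\wedge$ varieties; these are the same argument.
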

\begin{proof}
We show $(1) \Rightarrow (2)$. Suppose $\mathcal V$ is congruence meet-semidistributive but there exists $n > 2$ and $\textbf{A} \in \mathcal V$ with $\alpha_1,\ldots,\alpha_n \in \Con \textbf{A}$ such that $[\alpha_1,\ldots,\alpha_n] < \alpha_1 \wedge \cdots \wedge \alpha_n$. Set $\beta = \alpha_1 \wedge \cdots \wedge \alpha_n$ and note $[\beta,\ldots,\beta]_n < \beta$; thus, $([\beta,\ldots,\beta]_n,\beta)$ is an $n$-supernilpotent interval in $\Con \textbf{A}$. By Lemma \ref{lem:abel}, $([\theta,\ldots,\theta]_n,\theta)$ is a nontrivial $n$-supernilpotent interval in $\Con \textbf{F}_{\mathcal V}(2)$ where $\theta = \theta(x,y)$. Since $\textbf{F}_{\mathcal V}(2)$ is finite, we can find an n-supernilpotent cover $[\theta,\ldots,\theta]_n \prec \gamma \leq \theta$. By Proposition \ref{prop:tct},
$[\theta,\ldots,\theta]_n  \prec \gamma$ is an abelian cover, but this cannot occur in a congruence meet-semidistributive variety \cite[Cor.4.7]{twocommutator}; therefore, every $n$-commutator is neutral in $\mathcal V$. 

The implication $(2) \Rightarrow (3)$ is trivial.

We now show $(3) \Rightarrow (1)$. Assume the $n$-commutator is neutral for some $n \geq 2$. If $n > 2$, then for any $\alpha,\beta \in \Con \textbf{A}$, $\textbf{A} \in \mathcal V$,
\[
\alpha \wedge \beta = [\underbrace{\alpha,\ldots,\alpha}_{n-1},\beta] \leq [\alpha,\beta] \leq \alpha \wedge \beta
\]
shows the binary commutator is neutral and so $\mathcal V$ must be congruence meet-semidistributive \cite[Cor.4.7]{twocommutator}. 
\end{proof}

There are congruence meet-semidistributive varieties which are not locally finite but the two-generated free algebra is finite; for example, the variety of 2-semilattices. The implication $(2) \Rightarrow (1)$ is always true so the non-trivial direction is to establish $(1) \Rightarrow (2)$. We conjecture the characterization holds without additional assumptions of finiteness in the variety.

\begin{conjecture}
A variety is congruence meet-semidistributive if and only if all the higher commutators are neutral.
\end{conjecture}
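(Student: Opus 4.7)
The plan is to follow the same skeleton as Theorem \ref{thm:neutral}, replacing the two places where finiteness of $\textbf{F}_{\mathcal V}(2)$ was used: the extraction of a supernilpotent cover from a supernilpotent interval, and the tame-congruence-theoretic argument of Proposition \ref{prop:tct} that such a cover is abelian. Since \cite[Cor.4.7]{twocommutator} characterizes congruence meet-semidistributivity by neutrality of the binary commutator with no finiteness hypothesis, it is enough to show that a congruence meet-semidistributive variety $\mathcal V$ cannot contain an algebra with a non-trivial $n$-supernilpotent interval for any $n \geq 3$.

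Suppose for contradiction that such an interval exists in some $\textbf{A} \in \mathcal V$. By Lemma \ref{lem:abel}, one may assume it lives in $\Con \textbf{F}_{\mathcal V}(2)$: there is $\theta = \theta(x,y)$ with $[\theta,\ldots,\theta]_n < \theta$. Factoring out the $n$-fold commutator using Lemma \ref{lem:cen}(4) produces an algebra $\textbf{B} \in \mathcal V$ carrying a non-trivial congruence $\beta$ with $[\beta,\ldots,\beta]_n = 0 < \beta$. To reach the contradiction, I would extract a non-trivial abelian congruence $\gamma \leq \beta$; then binary commutator neutrality in the ambient congruence meet-semidistributive variety would force $\gamma = [\gamma,\gamma] = 0$.

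The natural candidate for $\gamma$ comes from the descending chain provided by (HC3),
\[
\beta \ \geq\ [\beta,\beta]\ \geq\ [\beta,\beta,\beta]\ \geq\ \cdots\ \geq\ [\beta,\ldots,\beta]_n\ =\ 0,
\]
setting $\gamma = [\beta,\ldots,\beta]_k$ with $k$ least such that $[\beta,\ldots,\beta]_{k+1}=0$. In a Mal'cev variety this $\gamma$ is immediately abelian, since by (HC2) and (HC8),
\[
[\gamma,\gamma]\ \leq\ [\beta,\gamma]\ =\ [\beta,[\beta,\ldots,\beta]_k]\ \leq\ [\beta,\ldots,\beta]_{k+1}\ =\ 0.
\]
Outside the Mal'cev setting, however, (HC8) is not available, and producing a congruence meet-semidistributive analog of it appears to be the main obstacle.

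The whole problem therefore reduces to establishing, in any congruence meet-semidistributive variety, at least the weak form
\[
[\alpha_1, [\alpha_2,\ldots,\alpha_n]]\ \leq\ [\alpha_1,\alpha_2,\ldots,\alpha_n]
\]
of (HC8). The likely tools are the term characterizations available in such varieties: Willard's weak near-unanimity terms of every sufficiently large arity, and the Kearnes--Kiss ``shape'' terms from \cite{shape}. These furnish explicit polynomial witnesses that can be substituted into the centralizer definition, and the hope is they yield the absorbing-type polynomials needed to push the pairs generating $[\beta,[\beta,\ldots,\beta]_k]$ down into $[\beta,\ldots,\beta]_{k+1}$. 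Once such a reduction is in hand, the cover-free descent above completes the proof and removes the finite-$\textbf{F}_{\mathcal V}(2)$ hypothesis of Theorem \ref{thm:neutral}.
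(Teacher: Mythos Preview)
This statement is a \emph{conjecture} in the paper, not a theorem; the paper offers no proof and explicitly leaves it open. So there is nothing to compare your attempt against. What remains is to assess whether your proposal actually proves the conjecture.

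It does not. You correctly isolate the obstruction, but you do not remove it. Everything up to the point where you write ``Outside the Mal'cev setting, however, (HC8) is not available'' is sound and essentially forced: the reduction to a non-trivial $n$-supernilpotent congruence $\beta$ in some $\textbf{B}\in\mathcal V$, the descending chain from (HC3), and the choice of $\gamma=[\beta,\ldots,\beta]_k$ at the last non-zero step are all fine. But the crucial step---showing $[\gamma,\gamma]=0$---requires exactly the inequality $[\beta,[\beta,\ldots,\beta]_k]\leq[\beta,\ldots,\beta]_{k+1}$, and for this you invoke only ``hope'' that Willard or Kearnes--Kiss terms will supply the needed absorbing polynomials. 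No argument is given, and none is known in the paper. Indeed, the paper establishes the relevant instance of (HC8) only in Mal'cev varieties (from \cite{supernil}), in congruence modular varieties (via \cite{moorhead} and the explicit computations around Theorem~\ref{thm:cmdiff}), and in varieties with weak $m$-difference terms for all $m$ (Theorem~\ref{thm:nweak}(1c)); congruence meet-semidistributive varieties have none of these in general.

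In short, your outline is a faithful restatement of why the conjecture is plausible and where the difficulty lies, but the heart of the matter---a meet-semidistributive substitute for (HC8), or any other mechanism forcing an abelian subinterval of a supernilpotent interval without finiteness---is left as an unfilled gap. That gap is precisely the content of the conjecture.
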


For locally finite varieties, tame congruence theory characterizes Taylor varieties by the existence of an idempotent ternary term which interprets as a Mal'cev operation on the blocks of any locally solvable congruence \cite[Thm.9.6]{TCT}; consequently, using \cite{herrmann} we can see that a locally finite variety is Taylor if and only if the abelian algebras are affine. Without any assumption of finiteness, the class of varieties for which the abelian algebras are affine form a proper subclass of Taylor varieties. A \textit{weak difference} term for a variety $\mathcal V$ is a term $c(x,y,z)$ such that for all $a,b \in \textbf{A} \in \mathcal V$ we have
\[
c(a,b,b) \, [\theta,\theta] \, a \, [\theta,\theta] \, c(b,b,a)
\] 
where $\theta=\theta(a,b) \in \Con \textbf{A}$. For $\alpha,\beta,\gamma \in \Con \textbf{A}$, recursively define congruences in the following manner: $\beta_0=\beta$, $\gamma_0=\gamma$ and $\gamma_{k+1} = \gamma \vee (\alpha \wedge \beta_k)$, $\beta_{k+1} = \beta \vee (\alpha \wedge \gamma_k)$.

\begin{theorem}\cite[Thm.4.8]{twocommutator}\label{thm:weakdiff}
Let $\mathcal V$ be a variety. The following conditions are equivalent:
\begin{enumerate}

	\item $\mathcal V$ has a weak difference term;
	
	\item $\mathcal V$ satisfies the congruence inclusion $\alpha \wedge ( \beta \circ \gamma) \subseteq \gamma_n \circ \beta_n$ for some $n \in \mathds{N}$.  
	
	\item $\mathcal V$ satisfies an idempotent Mal'cev condition which is strong enough to imply that
abelian algebras are affine.

\end{enumerate}
\end{theorem}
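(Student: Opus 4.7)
The plan is to prove the cycle $(1) \Rightarrow (2) \Rightarrow (3) \Rightarrow (1)$, with the first implication carrying most of the technical weight. For $(1) \Rightarrow (2)$, let $c(x,y,z)$ be a weak difference term and take a witness $(a,d) \in \alpha \wedge (\beta \circ \gamma)$ via $a \,\beta\, b \,\gamma\, d$. Setting $\theta = \Cg^{\mathbf{A}}(a,d) \leq \alpha$, the weak-difference identities applied to the pairs $(a,b)$ and $(b,d)$ give $a\,[\theta,\theta]\,c(a,b,b)$ and $c(b,b,d)\,[\theta,\theta]\,d$, while compatibility of $c$ with $\beta,\gamma$ yields $c(a,b,b)\,\gamma\,c(a,b,d)\,\beta\,c(b,b,d)$. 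Concatenating produces the path
\[
a \;[\theta,\theta]\; c(a,b,b) \;\gamma\; c(a,b,d) \;\beta\; c(b,b,d) \;[\theta,\theta]\; d.
\]
Because $[\theta,\theta] \leq \alpha$, the endpoint commutator pairs can be re-fed into the same argument, and re-ordering through repeated application of $c$ converts the path into the form required by $\gamma_k \circ \beta_k$ for a $k$ bounded by the derived length of $\theta$ in the free algebra $\mathbf{F}_{\mathcal V}(4)$, producing a uniform $n$.

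For $(2) \Rightarrow (3)$, unrolling the congruence inclusion on suitable generators of $\mathbf{F}_{\mathcal V}(4)$ extracts a finite system of idempotent terms in Hagemann--Mitschke style that witnesses the factorization $\gamma_n \circ \beta_n$. This system is an idempotent Mal'cev condition. In any abelian algebra, the iterated joins $\beta_k, \gamma_k$ collapse back to $\beta, \gamma$, so (2) reduces to $\alpha \wedge (\beta \circ \gamma) \subseteq \gamma \circ \beta$; specializing $\alpha = 1$ gives congruence permutability of abelian algebras, and by the affine structure theorem of \cite{herrmann} they are polynomially equivalent to modules.

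For $(3) \Rightarrow (1)$, work in $\mathbf{F}_{\mathcal V}(3)/[1,1]$; this quotient is abelian and therefore affine by (3). Its group Mal'cev operation $x - y + z$ appears as a polynomial of the quotient, and because the Mal'cev condition in (3) is idempotent, one lifts the defining identities to an idempotent ternary term $c(x,y,z)$ of $\mathcal V$ satisfying $c(x,y,y)\,[\theta,\theta]\,x\,[\theta,\theta]\,c(y,y,x)$ for $\theta = \theta(x,y)$, which is exactly the weak-difference identity.

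The main obstacle is the termination step in $(1) \Rightarrow (2)$: one must extract a uniform $n$ independent of the algebra and of the chosen congruences, so that the residual commutator pairs generated during iteration are ultimately absorbed into $\gamma_n \circ \beta_n$. The standard device is to perform the entire construction inside $\mathbf{F}_{\mathcal V}(4)$ on generic generators for $\alpha, \beta, \gamma$, exhibit the iteration as producing a descending chain of relations, and invoke a compactness-style argument to force stabilization at some finite stage. A secondary technical point is that $(3) \Rightarrow (1)$ must deliver a \emph{single} ternary term rather than a system, which rests squarely on the idempotency clause in (3).
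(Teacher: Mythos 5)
This is a cited result (\cite[Thm.~4.8]{twocommutator}); the paper does not reproduce its proof, so the review below assesses your sketch on its own terms.

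Your cycle is the right shape, but each leg has a concrete gap. In $(1) \Rightarrow (2)$ you set $\theta = \Cg^{\mathbf{A}}(a,d)$ and then claim $a\,[\theta,\theta]\,c(a,b,b)$ and $c(b,b,d)\,[\theta,\theta]\,d$. The weak-difference identity applied to the pair $(a,b)$ gives $a\,[\theta(a,b),\theta(a,b)]\,c(a,b,b)$, and there is no reason that $\theta(a,b) \leq \theta(a,d)$: we only know $a\,\beta\,b$ and $a\,\alpha\,d$, not $a\,\alpha\,b$. So the commutator pairs you generate land in $[\beta,\beta]$ and $[\gamma,\gamma]$ (inside $\beta$ and $\gamma$ respectively) rather than below $\alpha$, and your path does not obviously fold into $\gamma_k \circ \beta_k$. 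Likewise, ``bounded by the derived length of $\theta$ in $\mathbf{F}_{\mathcal V}(4)$'' presupposes the commutator chain $[\theta,\theta],[[\theta,\theta],[\theta,\theta]],\ldots$ terminates, which is false in general: a variety can have a weak difference term without its congruences being solvable, and $\mathbf{F}_{\mathcal V}(4)$ need not be finite. The correct termination mechanism is the monotone, stabilizing sequence $\beta_k,\gamma_k$ itself, not solvability; you have not shown how the residual commutator pairs are absorbed by $\alpha\wedge\beta_k$ and $\alpha\wedge\gamma_k$.

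In $(2) \Rightarrow (3)$, your claim that ``in any abelian algebra the iterated joins $\beta_k,\gamma_k$ collapse back to $\beta,\gamma$'' is simply false: already $\gamma_1 = \gamma \vee (\alpha\wedge\beta)$, which equals $\gamma$ only if $\alpha\wedge\beta \leq \gamma$, a condition that has nothing to do with abelianness. The intended route to ``abelian algebras are affine'' is different: once one knows the variety has a weak difference term (which requires either $(1)$ directly or $(2)\Rightarrow(1)$), abelianness trivializes the binary commutator, so the weak difference term becomes a genuine Mal'cev term and Herrmann's theorem applies. Thus $(2)\Rightarrow(3)$ leans on the loop, not on collapse. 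Finally, $(3)\Rightarrow(1)$ is stated at a level of generality (``one lifts the defining identities to an idempotent ternary term satisfying the weak-difference identity'') that elides the real work: explaining why an idempotent Mal'cev condition that forces abelian algebras to be affine produces, in every algebra of $\mathcal V$ and for every principal $\theta$, a \emph{single} fixed term $c$ with $c(x,y,y)\,[\theta,\theta]\,x\,[\theta,\theta]\,c(y,y,x)$. The secondary remark that idempotence is what yields a single term rather than a system is in the right direction, but as written it is an assertion, not an argument.
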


An explicit presentation of the Mal'cev condition for varieties with a weak difference term can be derived in the standard way from the congruence inclusion stated in Theorem \ref{thm:weakdiff}(2).

In light of Theorem \ref{thm:nabelian}, it is reasonable to conjecture that the class of varieties in which the $n$-supernilpotent algebras are polynomially equivalent to n-type loops determines a Mal'cev condition - it is easy to see that such varieties have a weak difference term. We shall show that for locally finite varieties the two classes are equivalent. A \textit{weak n-difference} term for a variety $\mathcal V$ is a term $c(x,y,z)$ such that for all $a,b \in \textbf{A} \in \mathcal V$ we have
\[
c(a,b,b) \, [\theta,\ldots,\theta]_{n} \, a \, [\theta,\ldots,\theta]_{n} \, c(b,b,a).
\] 
where $\theta=\theta(a,b) \in \Con \textbf{A}$.

\begin{theorem}\label{thm:ndifference}
Let $\mathcal V$ be a variety such that $\textbf{F}_{\mathcal V}(2)$ is finite. The following conditions are equivalent:
\begin{enumerate}

	\item $\mathcal V$ has a term $c(x,y,z)$ which is a weak n-difference term for all $n \geq 2$;
	
	\item $\mathcal V$ satisfies an idempotent Mal'cev condition which is strong enough to imply that for all $n \geq 2$, n-supernilpotent algebras are polynomially equivalent to n-type loops;
	
	\item $\mathcal V$ has a weak difference term.

\end{enumerate}
\end{theorem}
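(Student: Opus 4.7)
The plan is to establish the cycle $(1) \Rightarrow (3) \Rightarrow (2) \Rightarrow (1)$. The implication $(1) \Rightarrow (3)$ is immediate: since $[\theta,\theta]_2 = [\theta,\theta]$, specializing the weak $n$-difference identities to $n=2$ recovers exactly the defining property of a weak difference term, so the witnessing $c$ transfers directly.

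For $(3) \Rightarrow (2)$, I would take the existence of a weak difference term as the idempotent Mal'cev condition required in (2), appealing to Theorem \ref{thm:weakdiff}(3) which already packages it as such. The task is then to show that every $n$-supernilpotent algebra $A$ in any variety $\mathcal W$ with a weak difference term is polynomially equivalent to an $n$-type loop. My strategy is to show that $A$ generates a Mal'cev subvariety and then invoke Theorem \ref{thm:nabelian} directly. The Mal'cev structure would be established in two steps: first, an HC8-style nesting inequality, extended from the Mal'cev setting to the weak difference setting for nilpotent-type algebras, forces $n$-supernilpotence to imply nilpotence of class at most $n-1$; second, the standard reduction that nilpotent algebras in varieties with a weak difference term have Mal'cev terms supplies the Mal'cev subvariety.

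For $(2) \Rightarrow (1)$, I exploit the finiteness of $F = F_{\mathcal V}(2)$. Set $\theta = \theta^F(\bar x, \bar y)$ and $\beta = \bigwedge_{n \geq 2} [\theta,\ldots,\theta]_n$ in $\Con F$; by finiteness this descending chain stabilizes, so $\beta = [\theta,\ldots,\theta]_N$ for some $N$. The target is a term $c$ in the signature of $\mathcal V$ with $c(\bar x, \bar y, \bar y) \equiv \bar x \pmod \beta$ and $c(\bar y, \bar y, \bar x) \equiv \bar x \pmod \beta$ in $F$; both pairs will then lie in every $[\theta,\ldots,\theta]_n$, and the standard free-algebra universality argument, in which the homomorphism $F \to A$ sending generators to $a, b$ maps the principal commutator chain in $F$ into the corresponding chain for $(a,b)$ in $\langle a, b \rangle \leq A$, will upgrade the identity to the weak $n$-difference identity in every $A \in \mathcal V$. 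To produce $c$, I work in $F/\beta$, in which the image of $\theta$ is an $N$-supernilpotent congruence: applying hypothesis (2) together with Theorem \ref{thm:nabelian} and Corollary \ref{cor:supmal} to the associated subquotient structure yields a Mal'cev-type term in $\mathcal V$'s signature that realizes the required identities on $\bar x, \bar y$ in $F/\beta$.

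The main obstacle is $(3) \Rightarrow (2)$: establishing, without the $F_{\mathcal V}(2)$-finiteness assumption, that a weak difference term alone forces every supernilpotent algebra to be Mal'cev. The key technical sub-obstacle is extending the HC8 nesting inequality from the Mal'cev setting to the weak difference setting at the level of $n$-supernilpotent algebras, so that nilpotence of class at most $n-1$ can be extracted and the known reduction of nilpotent algebras in weak difference varieties to Mal'cev algebras then brought to bear.
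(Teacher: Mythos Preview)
Your cycle places all the weight on $(3) \Rightarrow (2)$, and you correctly flag that this step does not use the finiteness hypothesis. But that is exactly the problem: proving that a weak difference term alone forces every $n$-supernilpotent algebra to be Mal'cev (via an HC8 extension giving supernilpotent $\Rightarrow$ nilpotent) is precisely Conjecture~\ref{conj:weakcomm}, which the paper leaves open. The paper does not know how to do this without finiteness, and neither do you --- the ``HC8-style nesting inequality, extended from the Mal'cev setting to the weak difference setting'' is not available; it is developed only later in Section~\ref{sec:weakdiffcomm} under the additional hypothesis that weak $m$-difference terms already exist for all $m$. So your $(3) \Rightarrow (2)$ is circular at best and an open problem at worst.

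The paper's routing is different and this is where the finiteness actually does work. It introduces the intermediate condition $(2)'$ (an idempotent term which is Mal'cev on blocks of every supernilpotent congruence) and proves $(1) \Rightarrow (2)' \Rightarrow (3) \Rightarrow (1)$. The hard step is $(3) \Rightarrow (1)$, and there finiteness is used twice: first, $\textbf{F}_{\mathcal V}(2)$ is a finite Taylor algebra, so by \cite[Thm~9.6]{TCT} there is a genuine \emph{term} of $\mathcal V$ that is Mal'cev on every block of a solvable congruence; second, Corollary~\ref{cor:supmal} (which needs finiteness) says the supernilpotent congruence $\theta/[\theta,\ldots,\theta]_n$ is solvable. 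Combining these yields the weak $n$-difference term directly. Your $(2) \Rightarrow (1)$ gestures at this same mechanism, but the invocation of hypothesis~(2) is unclear: (2) concerns supernilpotent \emph{algebras}, while in $\textbf{F}_{\mathcal V}(2)/\beta$ you only have a supernilpotent \emph{congruence}, and polynomial equivalence to a loop does not by itself hand you a term in the signature of $\mathcal V$. The TCT route is what actually produces the term.
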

\begin{proof}
Consider the statement

$(2)'$: $\mathcal V$ has an idempotent term which interprets as a Mal'cev operation in every block of a supernilpotent congruence.
\vspace{0.1cm}

From Theorem \ref{thm:nabelian}, it is enough to show $(1)$ and $(3)$ are equivalent to $(2)'$. The Mal'cev condition in (2) is then the one given by Theorem \ref{thm:weakdiff}(2).

$(1) \Rightarrow (2)'$: A weak n-difference term must be idempotent because $\theta(a,a)=0_A$ for all $a \in A$ $\textbf{A} \in \mathcal V$. Since the n-commutator $[\theta,\ldots,\theta]=0_A$ in a n-supernilpotent algebra $\textbf{A}$, the weak n-difference term satisfies the Mal'cev identities.

$(2)' \Rightarrow (3)$: Let $c(x,y,z)$ be an idempotent term which interprets as a Mal'cev operation in every block of a supernilpotent congruence. Let $\theta$ be a congruence of $\textbf{A} \in \mathcal V$ and $(a,b) \in \theta$. If $\theta = [\theta, \theta]$, then 
\[
c(b,b,a) \, [\theta, \theta] \, a \, [\theta, \theta] \, c(a,b,b)
\] 
because $c$ is idempotent. In case $[\theta, \theta] < \theta$, we factor by $[\theta, \theta]$ and observe that $\theta/[\theta, \theta]$ is an abelian congruence in $\textbf{A}/[\theta,\theta]$. Then $c(a,b,b)/[\theta, \theta] = a/[\theta, \theta] = c(b,b,a)/[\theta, \theta]$. It follows that $c(x,y,z)$ is a weak difference for $\mathcal V$.

$(3) \Rightarrow (1)$: If $\mathcal V$ has no supernilpotent congruences, then it is congruence meet-semidistributive by Theorem \ref{thm:neutral} and so the n-commutators are all neutral. The third projection term will then serve as the required n-difference term. We may assume $\mathcal V$ contains nontrivial n-supernilpotent congruences. If we take $n \geq 2$, then by Lemma \ref{lem:abel} $([\theta,\ldots,\theta]_n, \theta)$ is a proper n-supernilpotent interval where $\theta=\theta(x,y) \in \Con \textbf{F}_{\mathcal V}(2)$. Since $\textbf{F}_{\mathcal V}(2)$ is finite, we may choose $n \in \mathds{N}$ large enough such that $[\theta,\ldots,\theta]_k = [\theta,\ldots,\theta]_n < \theta$ for $k \geq n$.

The free algebra $\textbf{F}_{\mathcal V}(2)$ is Taylor because it is finite and contained in a variety with a weak difference term; therefore, there is an idempotent term $c(x,y,z)$ in the subvariety $\mathcal V(\textbf{F}_{\mathcal V}(2)) \leq \mathcal V$ which interprets as a Mal'cev operation in every block of a solvable congruence in the subvariety \cite[Thm.9.6]{TCT}. Since $\theta/[\theta,\ldots,\theta]_n$ is a supernilpotent congruence in $\textbf{F}_{\mathcal V}(2)/[\theta,\ldots,\theta]_n$, by Corollary \ref{cor:supmal} it is solvable. Then $c(x,y,y)/[\theta,\ldots,\theta]_n = x/[\theta,\ldots,\theta]_n = c(y,y,x)/[\theta,\ldots,\theta]_n$ because $(x,y) \not\in [\theta,\ldots,\theta]_n$. This shows $c$ is a weak n-difference term for $\mathcal V$. By the choice of $n$, it is a weak m-difference term in $\mathcal V$ for all $m \geq 2$.
\end{proof}

\begin{corollary}
Let $\mathcal V$ be a locally finite Taylor variety. An algebra in $\mathcal V$ is n-supenilpotent if and only if it polynomially equivalent to a n-type loop.
\end{corollary}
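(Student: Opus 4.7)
The plan is to deduce the corollary from Theorem \ref{thm:ndifference}. Local finiteness of $\mathcal V$ makes $\textbf{F}_{\mathcal V}(2)$ finite, so that theorem's hypothesis is met; I will verify its condition (1)---that $\mathcal V$ has a single term $c(x,y,z)$ which is a weak $n$-difference term for all $n \geq 2$---and then read off condition (2) to obtain the harder direction ($n$-supernilpotent implies polynomially equivalent to an $n$-type loop). The easier direction is immediate from the implication $(3) \Rightarrow (1)$ of Theorem \ref{thm:nabelian} applied to the $n$-type loop itself: every loop is a quasigroup and hence carries the Mal'cev term $m(x,y,z) = (x/(y \backslash y)) \cdot (y \backslash z)$ recorded in Section \ref{sec:prelim}, so the loop lives in a Mal'cev variety and Theorem \ref{thm:nabelian} forces it to be $n$-supernilpotent; since the higher commutators are defined purely in terms of the polynomial clone, $n$-supernilpotence transfers to any polynomially equivalent algebra.

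For the harder direction, I build the weak $n$-difference term in essentially the same manner as in the proof of $(3) \Rightarrow (1)$ of Theorem \ref{thm:ndifference}, but with one simplification: we already have that $\mathcal V$ is Taylor, so $\textbf{F}_{\mathcal V}(2)$ and the subvariety it generates are Taylor without needing to first deduce this from a weak difference term hypothesis. By \cite[Thm.9.6]{TCT} applied to $\mathcal V(\textbf{F}_{\mathcal V}(2))$, there is an idempotent term $c(x,y,z)$ which interprets as a Mal'cev operation on every block of any solvable congruence of any algebra in this subvariety. Fix $n \geq 2$; to confirm $c$ is a weak $n$-difference term for $\mathcal V$, by the universality of the free generators it suffices to verify the identity $c(x,y,y) \equiv x \equiv c(y,y,x)$ modulo $[\theta(x,y),\ldots,\theta(x,y)]_n$ inside $\textbf{F}_{\mathcal V}(2)$. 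In the finite quotient $\textbf{F}_{\mathcal V}(2)/[\theta(x,y),\ldots,\theta(x,y)]_n$ the image of $\theta(x,y)$ is $n$-supernilpotent, hence solvable by Corollary \ref{cor:supmal}, so $c$ acts as a Mal'cev operation on its blocks and produces the required identity.

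The main obstacle has already been absorbed into the proof of $(3) \Rightarrow (1)$ of Theorem \ref{thm:ndifference}: transferring the Mal'cev behavior of $c$ on solvable blocks in the subvariety $\mathcal V(\textbf{F}_{\mathcal V}(2))$ to a weak $n$-difference identity across the full variety $\mathcal V$. The finite two-generated free algebra provides the bridge, since the required commutator-modulo identity is established inside the finite Taylor algebra $\textbf{F}_{\mathcal V}(2)$ and therefore holds universally in $\mathcal V$ by substitution of the free generators.
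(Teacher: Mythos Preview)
Your proof is correct and follows essentially the same route as the paper intends. The corollary is placed immediately after Theorem \ref{thm:ndifference} with no proof given, so the implicit argument is simply: locally finite Taylor varieties have a weak difference term (as noted in the paragraph preceding Theorem \ref{thm:weakdiff}, via \cite[Thm.9.6]{TCT} and Theorem \ref{thm:weakdiff}(3)), hence condition (3) of Theorem \ref{thm:ndifference} holds and condition (2) gives the forward direction, while Theorem \ref{thm:nabelian} handles the converse. You instead verify condition (1) of Theorem \ref{thm:ndifference} directly by rerunning the $(3)\Rightarrow(1)$ argument with the Taylor hypothesis already in hand; this is a minor variation and equally valid, though slightly more work than simply citing that locally finite Taylor implies a weak difference term and invoking $(3)\Rightarrow(2)$.
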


We conjecture that Theorem \ref{thm:ndifference} continues to be true without the finiteness assumption with one important proviso - we may no longer expect to find a fixed ternary term which is a weak n-difference term for all $n \geq 2$. The truth of the following conjecture would show that the class of varieties in which the n-supernilpotent algebras are polynomially equivalent to n-type loops is a Mal'cev condition and determines the class of varieties with a weak difference term.

\begin{conjecture}\label{conj:weakcomm}
For a variety $\mathcal V$ the following are equivalent:
\begin{enumerate}

	\item for each $n \geq 2$ there is a term $c_n(x,y,z)$ which is an weak n-difference term for $\mathcal V$;
	
	\item $\mathcal V$ satisfies an idempotent Mal'cev condition which is strong enough to imply that for all $n \geq 2$, n-supernilpotent algebras are polynomially equivalent to n-type loops;
	
	\item $\mathcal V$ has a weak difference term.

\end{enumerate}
\end{conjecture}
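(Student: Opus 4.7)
The plan is to establish the three implications in the cyclic order $(1) \Rightarrow (2) \Rightarrow (3) \Rightarrow (1)$, paralleling the proof of Theorem \ref{thm:ndifference} but replacing the use of tame congruence theory and the finiteness of $\mathbf{F}_{\mathcal V}(2)$ by direct higher-commutator calculations for varieties with a weak difference term. The first two implications should go through with only superficial modification; the main obstacle is $(3) \Rightarrow (1)$, where finiteness was decisively used in Theorem \ref{thm:ndifference} both to stabilize the descending sequence $[\theta,\ldots,\theta]_k$ and to invoke \cite[Thm.9.6]{TCT}.

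For $(1) \Rightarrow (2)$, the first observation is that any weak $n$-difference term $c_n$ is necessarily idempotent (setting $a=b$ collapses $\theta(a,b)$ to $0_A$), and on any $n$-supernilpotent congruence $\theta$ the defining identities $[\theta,\ldots,\theta]_n=0$ force $c_n(a,b,b)=a=c_n(b,b,a)$; so $c_n$ acts as a genuine Mal'cev operation on the $\theta$-blocks of every $n$-supernilpotent algebra in $\mathcal V$. Theorem \ref{thm:nabelian} then delivers the required representation as an $n$-type loop. Assembling one $c_n$ for each $n$ into a single idempotent Mal'cev condition (indexed by $n$) is routine.

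For $(2) \Rightarrow (3)$, specialize to $n=2$: every abelian ($= 2$-supernilpotent) algebra in $\mathcal V$ is polynomially equivalent to a $2$-type loop, and hence affine by the derivation in the first example following Theorem \ref{thm:nabelian}. Theorem \ref{thm:weakdiff}(3) then supplies the weak difference term for $\mathcal V$.

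The hard part is $(3) \Rightarrow (1)$. Starting from a weak difference term $d = c_2$, the natural strategy is to build $c_n$ inductively by a nested correction of the form $c_n(x,y,z) := d\bigl(c_{n-1}(x,y,z),\, c_{n-1}(y,y,z),\, z\bigr)$ (or an analogous three-fold substitution that uses $d$ to annihilate the residual error of $c_{n-1}$ at the two extreme arguments), with the aim of pushing the discrepancy from $[\theta,\ldots,\theta]_{n-1}$ into $[\theta,\ldots,\theta]_n$. The obstacle is that verifying the result really lands in the $n$-fold commutator, and not merely in an iterated binary commutator, requires identities such as monotone additivity of the higher commutator in each argument and the equality $[\theta,[\theta,\ldots,\theta]_{n-1}]=[\theta,\ldots,\theta]_n$ \emph{for varieties with a weak difference term}, not just for Mal'cev varieties. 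These are precisely the commutator properties that Section \ref{sec:weakdiffcomm} begins to develop via Theorems \ref{thm:ftermprop} and \ref{thm:nweak}; bringing those results to completion, or isolating the minimal fragment needed to drive the induction, is the content of the conjecture.
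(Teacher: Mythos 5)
The statement you are asked to prove is, in fact, left as an open conjecture in the paper: the author proves it only under the additional hypothesis that $\textbf{F}_{\mathcal V}(2)$ is finite (Theorem \ref{thm:ndifference}), establishes the hard implication $(3)\Rightarrow(1)$ in the special case of congruence modular varieties (Theorem \ref{thm:cmdiff}), and sketches an approach to the general case in the Remark following Theorem \ref{thm:nweak}. Your assessment of the landscape is therefore accurate: the implications $(1)\Rightarrow(2)$ and $(2)\Rightarrow(3)$ do indeed go through exactly as in the proof of Theorem \ref{thm:ndifference}, since neither invokes finiteness or tame congruence theory, and $(3)\Rightarrow(1)$ is precisely where the open content sits.

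Where your proposal and the paper's own hint diverge slightly is in the shape of the recursion. You suggest $c_n(x,y,z) := d\bigl(c_{n-1}(x,y,z),\,c_{n-1}(y,y,z),\,z\bigr)$, whereas the paper's Example \ref{ex:nilterms} works with $q_{n+1}(x,y,z) := q\bigl(x,\,q_n(x,y,y),\,q_n(x,y,z)\bigr)$ and proves that this lands the error in the level-$n$ iterated derived commutator $_{m}[\theta]^n$. The Remark then observes that converting ``error lies in the nested commutator'' into ``error lies in $[\theta,\ldots,\theta]_n$'' requires exactly the restricted form of (HC8) proved in Theorem \ref{thm:nweak}(1c) for varieties that already possess all the weak $m$-difference terms — a circularity that your own sketch runs into as well, since you would need monotone additivity and the inequality $[\theta,[\theta,\ldots,\theta]_{n-1}]\leq[\theta,\ldots,\theta]_n$ in a setting where only a weak $2$-difference term is yet available. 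So your proposal is not a proof, and you correctly say so; the gap you identify (bootstrapping (HC8) for weak difference term varieties from a weak $2$-difference term alone, without the finiteness that let Theorem \ref{thm:ndifference} stabilize $[\theta,\ldots,\theta]_k$ and invoke tame congruence theory) is the genuine obstacle the paper leaves unresolved.
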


We shall show the implication $(3) \Rightarrow (1)$ in Conjecture \ref{conj:weakcomm} holds for the important subclass of congruence modular varieties. A \textit{n-difference} term for a variety $\mathcal V$ is a term $c(x,y,z)$ such that for all $a,b \in \textbf{A} \in \mathcal V$ we have
\[
c(a,b,b) \, [\theta,\ldots,\theta]_{n} \, a \ \ \ \text{and} \ \ \  c(b,b,a)=a.
\] 
where $\theta=\theta(a,b) \in \Con \textbf{A}$.

\begin{theorem}\label{thm:cmdiff}
If $\mathcal V$ is a congruence modular variety, then $\mathcal V$ has a n-difference term $c_n(x,y,z)$ for each $n \geq 2$.
\end{theorem}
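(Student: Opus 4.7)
The plan is to induct on $n$, using Gumm's difference term as the base case and building $c_{n+1}$ by a short composition involving $c_n$ and $d$. Let $d(x,y,z)$ denote the Gumm difference term for $\mathcal V$, chosen so that $d(x,x,y)=y$ is a term identity while $d(x,y,y) \equiv x \pmod{[\theta(x,y),\theta(x,y)]}$ holds in every $\textbf{A} \in \mathcal V$ for all $x,y \in A$ (the latter is Gumm's defining property; the variable-ordering convention is obtained from the standard Gumm difference term by permuting arguments if necessary). For $n=2$ set $c_2 := d$: the hard identity gives $c_2(b,b,a)=a$ identically and the Gumm property gives $c_2(a,b,b) \equiv a \pmod{[\theta,\theta]}$, so $c_2$ is a $2$-difference term.

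For the inductive step, assume $c_n$ is an $n$-difference term and define
\[
c_{n+1}(x,y,z) \;:=\; d\bigl(x,\ c_n(x,y,y),\ c_n(x,y,z)\bigr).
\]
The term identity $c_{n+1}(y,y,x)=x$ follows from the inductive identity $c_n(y,y,z)=z$, which yields both $c_n(y,y,y)=y$ and $c_n(y,y,x)=x$; hence $c_{n+1}(y,y,x) = d(y,y,x) = x$ by the hard identity of $d$.

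For the commutator condition, fix $\textbf{A} \in \mathcal V$, $a,b \in A$, set $\theta := \theta(a,b)$, and let $u := c_n(a,b,b)$. The inductive hypothesis gives $(u,a) \in [\theta,\ldots,\theta]_n$, so $\theta(a,u) \leq [\theta,\ldots,\theta]_n$. Since $c_{n+1}(a,b,b) = d(a,u,u)$, Gumm's property yields $d(a,u,u) \equiv a \pmod{[\theta(a,u),\theta(a,u)]}$. Applying monotonicity (HC2) twice, symmetry of the binary commutator, and the nested-commutator inequality $[\theta,[\theta,\ldots,\theta]_n] \leq [\theta,\ldots,\theta]_{n+1}$ coming from HC8 gives
\[
[\theta(a,u),\theta(a,u)] \;\leq\; [[\theta,\ldots,\theta]_n,[\theta,\ldots,\theta]_n] \;\leq\; [\theta,[\theta,\ldots,\theta]_n] \;\leq\; [\theta,\ldots,\theta]_{n+1},
\]
completing the induction.

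The main obstacle is that HC8 and the symmetry properties of the higher commutators are stated in the excerpt only for Mal'cev varieties, while the final inequality in the display above requires their extension to the congruence modular setting. This extension is precisely the content of Moorhead's work \cite{moorhead} referenced in the introduction; once that extension is granted, the argument reduces to the short induction above.
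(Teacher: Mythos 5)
Your construction of the terms is exactly the paper's: the recursion
\[
c_{n+1}(x,y,z) := d\bigl(x,\,c_n(x,y,y),\,c_n(x,y,z)\bigr)
\]
agrees with the paper's $q_{m+1}(x,y,z):=q(x,q_m(x,y,y),q_m(x,y,z))$, and your verification of the term identity $c_{n+1}(y,y,x)=x$ is the same as theirs. Where you diverge is in how the commutator condition is verified, and that is precisely where there is a genuine gap.

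Your chain of inequalities
\[
[\theta(a,u),\theta(a,u)] \leq [[\theta,\ldots,\theta]_n,[\theta,\ldots,\theta]_n] \leq [\theta,[\theta,\ldots,\theta]_n] \leq [\theta,\ldots,\theta]_{n+1}
\]
is sound through the first two steps, which need only (HC1) and (HC2) and hence hold in any algebra. The third step is (HC8), which the paper states only for Mal'cev varieties. You note this and defer to Moorhead's work, but that citation does not close the gap: the 2016 arXiv preprint of Moorhead cited in this paper extends the symmetry, join-distributivity, and quotient properties of the higher commutator to congruence modular varieties, but it does \emph{not} establish the nesting property (HC8) in that generality. Indeed, the paper's own Remark following Theorem \ref{thm:nweak} explicitly identifies a restricted form of (HC8) for varieties with a weak difference term as the missing ingredient that would make the iteration in (\ref{eq:nilterms}) produce weak $n$-difference terms, and presents it as ``one approach to establishing Conjecture \ref{conj:weakcomm}'' --- clear internal evidence that the author regarded (HC8) as unavailable outside the Mal'cev setting. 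You cannot invoke as a known fact the very thing the theorem is designed to circumvent.

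The paper's proof takes a completely different and self-contained route: it uses the full family of Gumm terms $d_1,\ldots,d_n,q$ rather than just $q$, recursively defines polynomials $t_v$ indexed by sequences $v \in [n]^{k_m}$, and shows by two interlocking inductions that the higher centralizer relation applied directly to these polynomials forces $a \equiv q_m(a,b,b) \pmod{[\theta,\ldots,\theta]_m}$, with no appeal to (HC4)--(HC8). That is why it is so much longer than your argument. Your proof would become correct once (HC8) is established for congruence modular varieties, but as things stood when this paper was written, that was an open problem, so the appeal to Moorhead does not discharge the obligation.
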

\begin{proof}
From Gumm \cite{gummterms}, $\mathcal V$ is congruence modular if and only if there are terms $d_1,\ldots, d_n, q$ such that the following identities
\begin{eqnarray*}
x &=& d_1(x,y,z) \\
x &=& d_i(x,y,x) \ \ \ \ \ \ \text{ for all } i \\
d_i(x,x,z) &=& d_{i+1}(x,x,z) \ \ \ \text{ for even } i \\
d_i(x,z,z) &=& d_{i+1}(x,z,z) \ \ \ \text{ for odd } i \\
d_n(x,z,z) &=& q(x,z,z) \\
q(x,x,z) &=& z
\end{eqnarray*}
hold throughout $\mathcal V$. Recursively define terms $q_n(x,y,z)$ for $n \geq 2$ by setting $q_2(x,y,z):=q(x,y,z)$ and $q_{m+1}(x,y,z):=q(x,q_m(x,y,y),q_m(x,y,z))$. The claim is that $q_m$ is a m-difference term for $\mathcal V$. It is easy to see that $q_m(x,x,y)=y$ for all $m \geq 2$. The goal of the proof is to show $x \, [\theta,\ldots,\theta]_m \, q_m(x,y,y)$ when $(x,y) \in \theta$. This is well-known for $m=2$ (a nice development is in \cite[Thm 5.5,Thm 6.4]{commod} which synthesizes contributions from several authors). For $m \geq 3$, the strategy will be to apply the higher centralizer relation condition to a new recursively defined sequence of polynomials. Fix $\textbf{A} \in \mathcal V$ and $a,b \in A$.

Define the integer sequence $\{k_n\}$ by $k_3=3$ and $k_m=2k_{m-1}+1$. For $(v_1,v_2,v_3) \in [n]^{k_3}$ define
\[
t_{(v_1,v_2,v_3)}(x_1,x_2,x_3) := d_{v_1}(a, d_{v_2}(a,x_3,x_1),d_{v_3}(a,b,x_2)),
\] 
and then for $v \in [n]^{k_{m+1}}$ where $v=(i,w,u)$ with $w,u \in [n]^{k_m}$ define 
\begin{eqnarray*}
t_{v}(x_1,\ldots,x_{m+1}) &=& t_{(i,w,u)}(x_1,\ldots,x_{m+1}) \\
&:=& d_{i}(a, t_{w}(x_1,\ldots,x_{m-1},x_{m+1}),t_{u}(b,\ldots,b,x_{m},b)).
\end{eqnarray*}
Using the identity $d_n(x,z,z) = q(x,z,z)$, it is not difficult to see that for $(n,\ldots,n) \in [n]^{k_m}$, $t_{(n,\ldots,n)}(b,\ldots,b)=q_m(a,b,b)$. The task is to show $a \, [\theta,\ldots,\theta]_m \, t_{(n,\ldots,n)}(b,\ldots,b)$ where $(a,b) \in \theta \in \Con \textbf{A}$. The first step is to establish the following claim:

\begin{claim}
\begin{equation}\label{eq:induct3}
t_{v}(b,\ldots,b,a) \, [\theta,\ldots,\theta]_m \, t_{v}(b,\ldots,b,b) \ \ \ \text{for all} \ \ v \in [n]^{k_m}. 
\end{equation} 
\end{claim}
\begin{proof}
To do this, we shall argue inductively on $m \geq 3$ that
\begin{equation}\label{eq:induct1}
t_v(z_1,\ldots,z_{m-2},a,a)=a=t_v(z_1,\ldots,z_{m-2},a,b) 
\end{equation} 
\begin{equation}\label{eq:induct2}
t_v(w_1,\ldots,w_{m-2},b,a)=t_v(w_1,\ldots,w_{m-2},b,b) 
\end{equation}
for every $(z_1,\ldots,z_{m-2}) \in \{a,b\}^{m-2}$ and $(w_1,\ldots,w_{m-2}) \in \{a,b\}^{m-2} \backslash \{(b,\ldots,b)\}$. Since by definition $\textbf{A} \vDash \mathrm{C}(\theta,\ldots,\theta;[\theta,\ldots,\theta]_m)$, the evaluation (\ref{eq:induct3}) will then follow from (\ref{eq:induct1}) and (\ref{eq:induct2}) applied to the premise in the centralizer relation. For the base case $m=3$, if we let $v=(i,j,k)$, then using the identities $x=d_r(x,y,x)$ we have the evaluations
\begin{eqnarray*}
t_{v}(a,a,a) &=& d_{i}(a, d_{j}(a,a,a),d_{k}(a,b,a)) = d_i(a,a,a) = a \\
t_{v}(a,a,b) &=& d_{i}(a, d_{j}(a,b,a),d_{k}(a,b,a)) = d_i(a,a,a) = a \\
t_{v}(b,a,a) &=& d_{i}(a, d_{j}(a,a,b),d_{k}(a,b,a)) = d_i(a,d_j(a,a,b),a) = a \\
t_{v}(b,a,b) &=& d_{i}(a, d_{j}(a,b,b),d_{k}(a,b,a)) = d_i(a, d_j(a,b,b),a) = a 
\end{eqnarray*}
verifying (\ref{eq:induct1}) and then 
\begin{eqnarray*}
t_{v}(a,b,a) &=& d_{i}(a, d_{j}(a,a,a),d_{k}(a,b,b))=d_i(a,a,d_k(a,b,b)) \\ 
t_{v}(a,b,b) &=& d_{i}(a, d_{j}(a,b,a),d_{k}(a,b,b)) = d_i(a,a,d_k(a,b,b))
\end{eqnarray*}
to show (\ref{eq:induct2}). Now assume the result for $m \geq 3$. Take $v \in [n]^{k_{m+1}}$ where $v=(i,w,u)$ and $(z_1,\ldots,z_{m-1}) \in \{a,b\}^{m-1}$. The inductive hypothesis on the right-side of (\ref{eq:induct1}) yields $a=t_{u}(b,\ldots,b,a,b)$. Then  
\begin{eqnarray*}
t_{(i,w,u)}(z_1,\ldots,z_{m-1},a,a) &=& d_i(a,t_{w}(z_1,\ldots,z_{m-1},a),t_{u}(b,\ldots,b,a,b)) \\
 &=& d_i(a,t_{w}(z_1,\ldots,z_{m-1},a),a)=a \\
t_{(i,w,u)}(z_1,\ldots,z_{m-1},a,b) &=& d_i(a,t_{w}(z_1,\ldots,z_{m-1},b),t_{u}(b,\ldots,b,a,b)) \\
&=& d_i(a,t_{w}(z_1,\ldots,z_{m-1},b),a)=a \\
\end{eqnarray*}
yields (\ref{eq:induct1}) at the m+1-step. Since $(w_1,\ldots,w_{m-1}) \in \{a,b\}^{m-1} \backslash \{(b,\ldots,b)\}$ accounts for all possible tuples in the inductive hypothesis for step $m$, we have $t_{w}(w_1,\ldots,w_{m-1},a) = t_{w}(w_1,\ldots,w_{m-1},b)$ by (\ref{eq:induct1}) and (\ref{eq:induct2}). Then 
\begin{eqnarray*}
t_{v}(w_1,\ldots,w_{m-1},b,a) &=& d_i(a,t_{w}(w_1,\ldots,w_{m-1},a),t_{u}(b,\ldots,b,b)) \\
&=& d_i(a,t_{w}(w_1,\ldots,w_{m-1},b),t_{u}(b,\ldots,b,b)) \\
&=& t_{v}(w_1,\ldots,w_{m-1},b,b) 
\end{eqnarray*}
which verifies (\ref{eq:induct2}) at m+1-step. The inductive step is now complete and establishes the claim.
\end{proof}

To finish the theorem, we must show $a \, [\theta,\ldots,\theta]_m \, t_{(n,\ldots,n)}(b,\ldots,b)$ whenever $(n,\ldots,n) \in [n]^{k_m}$. This will be accomplished by using (\ref{eq:induct3}) together with the Gumm term identities. By using projections to extend the number of Gumm terms, we may assume $n$ is odd. The argument will calculate with polynomials $t_{v}(x_1,\ldots,x_m)$ for a particular subset of sequences $v \in [n]^{k_m}$ inductively defined in the following way: let $S_3=\{(i,j,n) \in [n]^{3}: i,j \in [n]\}$ and $S_m=\{ (i,u,n,\ldots,n) \in [n]^{k_m}: u \in S_{m-1}, i \in [n]\}$ for $m > 3$.

\begin{claim}\label{claim:reduce}
\begin{enumerate}

	\item Let $1 \leq r \leq m-1$ and $i_r$ even. If $(i_{1},\ldots,i_{r},n,\ldots,n) \in S_m$, then
	\[
t_{(i_1,\ldots,i_{r},n,\ldots,n)}(b,\ldots,b,b) = t_{(i_1,\ldots,i_{r}-1,n,\ldots,n)}(b,\ldots,b,b).
\]
	
	\item Let $i_{m-1}$ odd. If $(i_{1},\ldots,i_{m-1},n,\ldots,n) \in S_m$, then
\[
t_{(i_1,\ldots,i_{m-1},n,\ldots,n)}(b,\ldots,b,a) = t_{(i_1,\ldots,i_{m-1}-1,n,\ldots,n)}(b,\ldots,b,a). 
\]
	
	\item Let $1 \leq r \leq m-1$ and $i_r$ odd. If $(i_1,\ldots,i_{r},1,\ldots,1,n,\ldots,n) \in S_m$, then 
\[
t_{(i_1,\ldots,i_{r},1,\ldots,1,n,\ldots,n)}(b,\ldots,b,b) = t_{(i_1,\ldots,i_{r}-1,1,\ldots,1,n,\ldots,n)}(b,\ldots,b,b)
\]

	\item Let $1 \leq r \leq m-1$. If $(i_1,\ldots,i_r,1,j_1,\ldots,j_s,n,\ldots,n) \in S_m$, then for $p_1,\ldots,p_s \in [n]$,
	\[
	t_{(i_1,\ldots,i_r,1,j_1,\ldots,j_s,n,\ldots,n)}(b,\ldots,b) = t_{(i_1,\ldots,i_r,1,p_1,\ldots,p_s,n,\ldots,n)}(b,\ldots,b).
	\]

\end{enumerate}
\end{claim}
\begin{proof}
(1) Precisely because $(i_{1},\ldots,i_{r},n,\ldots,n) \in S_m$, we can expand the nested definition from left-to-right and use the identity $d_{i_r-1}(x,y,y)=d_{i_r}(x,y,y)$ to see
\[
t_{(i_1,\ldots,i_{r},n,\ldots,n)}(b,\ldots,b,b) \hspace{12cm}
\]
\begin{eqnarray*}
&=& d_{i_1}(a,d_{i_2}(a,\cdots d_{i_r}(a, t_{(n,\ldots,n)}(b,\ldots,b),t_{(n,\ldots,n)}(b,\ldots,b)) \cdots ))\\
&=& d_{i_1}(a,d_{i_2}(a,\cdots d_{i_r-1}(a, t_{(n,\ldots,n)}(b,\ldots,b),t_{(n,\ldots,n)}(b,\ldots,b)) \cdots )) \\
&=& t_{(i_1,\ldots,i_{r},n,\ldots,n)}(b,\ldots,b,b).
\end{eqnarray*}
(2) Similarly, using the identity $d_{i_{m-1}-1}(x,x,y)=d_{i_m}(x,x,y)$ we have
\begin{eqnarray*}
t_{(i_1,\ldots,i_{m-1},n,\ldots,n)}(b,\ldots,b,a)  &=& d_{i_1}(a,d_{i_2}(a,\cdots d_{i_{m-1}}(a,a,d_n(a,b,b)) \cdots ))\\
&=& d_{i_1}(a,d_{i_2}(a,\cdots d_{i_{m-1}-1}(a,a,d_n(a,b,b)) \cdots )) \\
&=& t_{(i_1,\ldots,i_{m-1}-1,n,\ldots,n)}(b,\ldots,b,a).
\end{eqnarray*}
(3) Since $(i_1,\ldots,i_{r},1,\ldots,1,n,\ldots,n) \in S_m$ and $x=d_1(x,y,z)$, when we expand the nested definition we have
\[
t_{(i_1,\ldots,i_{r},1,\ldots,1,n,\ldots,n)}(b,\ldots,b,b) \hspace{9cm}
\]
\begin{eqnarray*}
&=& d_{i_1}(a,d_{i_2}(a,\cdots d_{i_r}(a,d_1(a,\cdots d_1(a,d_1(a,b,b),d_n(a,b,b))\cdots ))\cdots )) \\
&=& d_{i_1}(a,d_{i_2}(a,\cdots d_{i_r}(a,a,t_{(n,\ldots,n)}(b,\ldots,b))\cdots )) \\
&=& d_{i_1}(a,d_{i_2}(a,\cdots d_{i_r - 1}(a,a,t_{(n,\ldots,n)}(b,\ldots,b))\cdots )) \\
&=& d_{i_1}(a,d_{i_2}(a,\cdots d_{i_r-1}(a,d_1(a,\cdots d_1(a,d_1(a,b,b),d_n(a,b,b))\cdots ))\cdots )) \\
&=& t_{(i_1,\ldots,i_{r}-1,1,\ldots,1,n,\ldots,n)}(b,\ldots,b,b).
\end{eqnarray*}
(4) Since $d_1$ is the first projection, this just reflects a substitution of the form $d_1(a,x,y)=d_1(a,w,z)$ in the nested definition of the polynomials.
\end{proof}
For the final step, we shall establish by descent on $0 \leq r \leq m-2$ that 
\begin{equation}\label{eq:induct4}
t_{(i_1,\ldots,i_r,n,\ldots,n)}(b,\ldots,b) \ [\theta,\ldots,\theta]_m \ t_{(i_1,\ldots,i_r,1,\ldots,1,n,\ldots,n)}(b,\ldots,b) 
\end{equation}
for all $(i_1,\ldots,i_r,1,\ldots,1,n,\ldots,n) \in S_m$. The theorem will then be concluded since at $r=0$ we have 
\begin{eqnarray*}
a = d_1(a,t_{(1,\ldots1,n)}(b,\ldots,b),t_{(n,\ldots,n)}(b,\ldots,b)) &=& t_{(1,\ldots,1,n,\ldots,n)}(b,\ldots,b) \\
&[\theta,\ldots,\theta]_m& t_{(n,\ldots,n,n,\ldots,n)}(b,\ldots,b).
\end{eqnarray*}
For the base case, take any $(i_1,\ldots,i_{m-2},n,n,\ldots,n) \in S_{m}$ and using Claim \ref{claim:reduce} and (\ref{eq:induct3}) calculate 
\begin{eqnarray*}
t_{(i_1,\ldots,i_{m-2},n,n,\ldots,n)}(b,\ldots,b,b) &[\theta,\ldots,\theta]_m& t_{(i_1,\ldots,i_{m-2},n,n,\ldots,n)}(b,\ldots,b,a) \\
&=& t_{(i_1,\ldots,i_{m-2},n-1,n,\ldots,n)}(b,\ldots,b,a) \\
&[\theta,\ldots,\theta]_m& t_{(i_1,\ldots,i_{m-2},n-1,n,\ldots,n)}(b,\ldots,b,b) \\
&=& t_{(i_1,\ldots,i_{m-2},n-2,n,\ldots,n)}(b,\ldots,b,b) \\
&\vdots& \\
&=& t_{(i_1,\ldots,i_{m-2},2,n,\ldots,n)}(b,\ldots,b,a) \\
&[\theta,\ldots,\theta]_m& t_{(i_1,\ldots,i_{m-2},2,n,\ldots,n)}(b,\ldots,b,b) \\
&=& t_{(i_1,\ldots,i_{m-2},1,n,\ldots,n)}(b,\ldots,b,b). 
\end{eqnarray*}
Now assume (\ref{eq:induct4}) holds for some $0 < r \leq m-2$. For $(i_1,\ldots,i_{r-1},1,\ldots,1,n,\ldots,n) \in S_m$ and calculate
\begin{eqnarray*}
t_{(i_1,\ldots,i_{r-1},n,n,\ldots,n)}(b,\ldots,b,b) &[\theta,\ldots,\theta]_m& t_{(i_1,\ldots,i_{r-1},n,1,\ldots,1,n,\ldots,n)}(b,\ldots,b,b) \\
&=& t_{(i_1,\ldots,i_{r-1},n-1,1,\ldots,1,n,\ldots,n)}(b,\ldots,b,b) \\
&[\theta,\ldots,\theta]_m& t_{(i_1,\ldots,i_{r-1},n-1,n,\ldots,n,n,\ldots,n)}(b,\ldots,b,b) \\
&=& t_{(i_1,\ldots,i_{r-1},n-2,n,\ldots,n,n,\ldots,n)}(b,\ldots,b,b) \\
&\vdots& \\
&=& t_{(i_1,\ldots,i_{r-1},2,1,\ldots,1,n,\ldots,n)}(b,\ldots,b,b) \\
&[\theta,\ldots,\theta]_m& t_{(i_1,\ldots,i_{r-1},2,n,\ldots,n,n,\ldots,n)}(b,\ldots,b,b) \\
&=& t_{(i_1,\ldots,i_{r-1},1,n,\ldots,n,n,\ldots,n)}(b,\ldots,b,b) \\
&=& t_{(i_1,\ldots,i_{r-1},1,1,\ldots,1,n,\ldots,n)}(b,\ldots,b,b).  
\end{eqnarray*}
This completes the inductive step and the theorem.
\end{proof}

It follows from (HC3) and the argument in \cite[Thm 6.3]{commod} that in a congruence modular variety, any n-difference term can be connected to the left-projection by a sequence of Gumm terms.

\begin{proposition}
If $\mathcal V$ is a congruence modular variety with a n-difference term $c(x,y,z)$, then there are ternary terms $d_1,\ldots,d_m$ such that $d_1,\ldots,d_m,c_n$ are Gumm terms for $\mathcal V$.
\end{proposition}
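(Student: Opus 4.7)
The plan is to deduce the statement from the classical extension result \cite[Thm.6.3]{commod}, which says that in a congruence modular variety any difference term can serve as the ``$q$'' at the end of a Gumm sequence. The only additional ingredient is to verify that an n-difference term is automatically a (binary) difference term, and this is exactly where (HC3) enters.

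First I would observe that, iterating (HC3), one obtains
\[
[\theta,\ldots,\theta]_{n} \leq [\theta,\theta]
\]
for every $\theta \in \Con \textbf{A}$ in every $\textbf{A} \in \mathcal{V}$. Consequently, the defining identities of an n-difference term,
\[
c_n(a,b,b) \ [\theta,\ldots,\theta]_{n} \ a \qquad \text{and} \qquad c_n(b,b,a) = a,
\]
specialize to
\[
c_n(a,b,b) \ [\theta,\theta] \ a \qquad \text{and} \qquad c_n(b,b,a) = a,
\]
which is precisely the definition of a (classical) difference term. In particular the identity $c_n(x,x,z) = z$ holds in $\mathcal{V}$, which is exactly the final identity required of ``$q$'' in the Gumm axioms.

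Next I would fix a set of Gumm terms $d'_1,\ldots,d'_{n'}, q'$ provided by congruence modularity, and invoke the construction of \cite[Thm.6.3]{commod}. Both $q'$ and $c_n$ are difference terms, so $q'(x,z,z)$ and $c_n(x,z,z)$ are congruent to $x$ modulo $[\theta(x,z),\theta(x,z)]$ in $\textbf{F}_{\mathcal{V}}(3)$. The Day--Gumm terms decompose this $[\theta,\theta]$-step into a finite chain of $\theta$-alternations, and reading off this chain produces ternary terms $d_1,\ldots,d_m$ with $d_1$ the first projection, each $d_i(x,y,x) = x$, the alternating identities $d_i(x,x,z) = d_{i+1}(x,x,z)$ for even $i$ and $d_i(x,z,z) = d_{i+1}(x,z,z)$ for odd $i$, and terminating in $d_m(x,z,z) = c_n(x,z,z)$. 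Combined with $c_n(x,x,z)=z$ established above, this exhibits $d_1,\ldots,d_m, c_n$ as a Gumm sequence for $\mathcal{V}$.

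The main obstacle is the interpolation step, i.e.\ constructing the bridging terms between $q'$ and $c_n$ while preserving the parity of the Gumm identities and the identity $d_i(x,y,x) = x$. This is exactly the content of the original proof of \cite[Thm.6.3]{commod}; once (HC3) reduces the situation to the binary commutator, no new ideas are needed and the bookkeeping is mechanical.
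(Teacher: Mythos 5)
Your argument matches the paper's (implicit) proof exactly: the paper gives no formal proof for this proposition, only the one-sentence remark preceding it that it ``follows from (HC3) and the argument in [Thm~6.3]{commod},'' and your proposal expands precisely that sketch --- iterate (HC3) to get $[\theta,\ldots,\theta]_n \leq [\theta,\theta]$, observe this makes an $n$-difference term a difference term, and then invoke the interpolation construction of Freese--McKenzie Theorem~6.3 to prepend Gumm terms. The approach is the same; you have simply made the reduction via (HC3) explicit.
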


\begin{theorem}\label{thm:cdneutral}
If $\mathcal V$ is a congruence distributive variety, then $\mathcal V$ is n-commutator neutral for all $n \geq 2$.
\end{theorem}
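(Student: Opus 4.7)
The plan is to prove the theorem by induction on $n$, anchoring at the binary case and then pushing upward via a nesting identity for the higher commutator.

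For the base case $n=2$, every congruence distributive variety is congruence meet-semidistributive, and by the equivalence from \cite{twocommutator} noted in the introduction, meet-semidistributivity is exactly the statement that the binary term-condition commutator is neutral; so $[\alpha_1,\alpha_2]=\alpha_1\wedge\alpha_2$ holds throughout $\mathcal V$.

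For the inductive step, fix $n\geq 3$ and assume the theorem at level $n-1$, and fix $\alpha_1,\ldots,\alpha_n\in\Con\textbf{A}$ with $\textbf{A}\in\mathcal V$. Property (HC1) already supplies $[\alpha_1,\ldots,\alpha_n]\leq\alpha_1\wedge\cdots\wedge\alpha_n$, so only the reverse inequality needs work. The key step will be to apply the nesting inequality
\[
[\alpha_1,[\alpha_2,\ldots,\alpha_n]]\leq [\alpha_1,\ldots,\alpha_n],
\]
which is (HC8). Although the paper states (HC8) only for Mal'cev varieties, Moorhead's work \cite{moorhead} extends the standard higher-commutator identities to the congruence modular setting, so (HC8) is available in any CD variety. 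Then the induction hypothesis applied to the $(n-1)$-ary inner commutator yields $[\alpha_2,\ldots,\alpha_n]=\alpha_2\wedge\cdots\wedge\alpha_n$, and the binary base case applied on the outside yields $[\alpha_1,\alpha_2\wedge\cdots\wedge\alpha_n]=\alpha_1\wedge\cdots\wedge\alpha_n$. Chaining these gives $\alpha_1\wedge\cdots\wedge\alpha_n\leq [\alpha_1,\ldots,\alpha_n]$, closing the induction.

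The hard part will be legitimizing (HC8) outside the Mal'cev setting. The cleanest route is the appeal to Moorhead's CM extension above. If one preferred a self-contained argument tailored to CD varieties, the technical core would be to derive the specific nested inequality directly from a J\'onsson chain for $\mathcal V$, presumably by constructing a polynomial witness out of J\'onsson terms and running an iterated centralizer argument; this is where the most care would be required. A secondary route would mirror the proof of Theorem~\ref{thm:neutral}: a failure of neutrality would produce a proper $n$-supernilpotent interval, and one could seek a CD analogue of Proposition~\ref{prop:tct} forcing an abelian cover inside such an interval, contradicting neutrality of the binary commutator. Without the finiteness of $\textbf{F}_{\mathcal V}(2)$ used in that proof, however, extracting such a cover is non-obvious, so the Moorhead route seems preferable.
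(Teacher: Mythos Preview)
Your argument is correct, but it takes a genuinely different route from the paper's own proof. You proceed by induction on $n$, bootstrapping from the binary case (via meet-semidistributivity) and then invoking the nesting inequality (HC8) in the congruence modular setting through Moorhead's extension. The paper instead gives a direct, self-contained argument: it takes the J\'onsson terms $d_1,\ldots,d_n$ characterizing congruence distributivity and reuses the recursive polynomial construction $t_v$ from Theorem~\ref{thm:cmdiff}. That construction yields $a \equiv_{[\gamma,\ldots,\gamma]_m} t_{(n,\ldots,n)}(b,\ldots,b)$ for any $(a,b)\in\gamma$; since in the CD case $d_n$ is the third projection (rather than connecting to a Gumm term $q$), one computes $t_{(n,\ldots,n)}(b,\ldots,b)=b$ outright, forcing $\gamma\leq[\gamma,\ldots,\gamma]_m$ and hence neutrality.

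What each approach buys: your inductive argument is much shorter and conceptually clean, but it outsources the real work to \cite{moorhead}, whose CM extension of (HC8) is itself a substantial result (and was only a preprint when this paper was written). The paper's proof is longer but entirely internal: it never needs (HC8) in CM varieties, instead exploiting the explicit J\'onsson-term machinery already developed for Theorem~\ref{thm:cmdiff}. The paper's route also makes transparent \emph{why} distributivity (as opposed to mere modularity) forces neutrality---it is exactly the fact that the final J\'onsson term is a projection that collapses $t_{(n,\ldots,n)}(b,\ldots,b)$ to $b$.
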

\begin{proof}
From J\'{o}nsson \cite{jonsson}, $\mathcal V$ is congruence distributive if and only if there are terms $d_1,\ldots, d_n$ such that the following identities
\begin{eqnarray*}
x &=& d_1(x,y,z) \\
x &=& d_i(x,y,x) \ \ \ \ \ \ \text{ for all } i \\
d_i(x,x,z) &=& d_{i+1}(x,x,z) \ \ \ \text{ for even } i \\
d_i(x,z,z) &=& d_{i+1}(x,z,z) \ \ \ \text{ for odd } i \\
d_n(x,y,z) &=& z 
\end{eqnarray*}
hold throughout $\mathcal V$. If $\mathcal V$ is not neutral, then there is $\textbf{A} \in \mathcal V$ and $\theta_1,\ldots,\theta_m \in \Con \textbf{A}$ such that $[\theta_1,\ldots,\theta_m] < \theta_1 \wedge \cdots \wedge \theta_m$. If we set $\gamma = \theta_1 \wedge \cdots \wedge \theta_m$, then $[\gamma,\ldots,\gamma]_m < \gamma$. For any $(a,b) \in \gamma$, using the same definitions as in Theorem \ref{thm:cmdiff}, the argument establishes $a \, [\gamma,\ldots,\gamma]_m \, t_{(n,\ldots,n)}(b,\ldots,b)$ when $(n,\ldots,n) \in [n]^{k_m}$ independently of the term $q$. Since $d_n$ is the third projection throughout $\mathcal V$, we see that $t_{(n,\ldots,n)}(b,\ldots,b)=b$ which implies $\theta(a,b) \leq [\gamma,\ldots,\gamma]_m$. Since $(a,b) \in \gamma$ was arbitrary, $\gamma = [\gamma,\ldots,\gamma]_m$ which is a contradiction. 
\end{proof}

\section{The Higher Commutator in Varieties with Weak n-Difference Terms}\label{sec:weakdiffcomm}

In this section, we establish restricted versions of properties (HC5), (HC6) and (HC8) in varieties with weak n-difference terms. According to Theorem \ref{thm:ndifference}, these properties hold in varieties with a weak difference term provided the two-generated free algebras are finite, and for general varieties with a weak difference term if Conjecture \ref{conj:weakcomm} is resolved positively.

In the presence of weak n-difference terms in the variety $\mathcal V$, we can define several different functions $f: \Con \textbf{A} \rightarrow \Con \textbf{A}$, $\textbf{A} \in \mathcal V$, with uniform definitions throughout the variety, and terms $p_f(x,y,z)$ such that for every $\theta \in \Con \textbf{A}$, the term $p_f$ satisfies the Mal'cev identities on the blocks of the congruence $\theta/f(\theta)$ in the quotient algebra $\textbf{A}/f(\theta)$. It is reasonable to presume that by restricting to congruences in the interval $(f(\theta),\theta)$, some of the properties (HC4-HC8) which hold in Mal'cev varieties will also hold in this instance; indeed, this is the case.

The key ingredient in \cite{supernil} for proving properties of the higher commutator in Mal'cev varieties is the difference operator and its relation to absorbing polynomials in the algebra. In order to adapt the approach taken in \cite{supernil}, we begin be modifying those definitions suitable to the more general setting.

\begin{definition}
Let $\textbf{A}$ be an algebra and fix a ternary polynomial $m \in \Pol_3 \textbf{A}$. For $0 \in A$ and vectors $\textbf{a}_1 \in A^{n_1}, \ldots, \textbf{a}_k \in A^{n_k}$ with $n=n_1 + \cdots n_k$, recursively define the \textit{difference operators} as mappings $D^k_{o,(\textbf{a}_1,\ldots,\textbf{a}_k)}: \Pol_n \textbf{A} \rightarrow \Pol_n \textbf{A}$ in the following manner: for all $\textbf{x}_1 \in A^{n_1},\ldots, \textbf{x}_k \in A^{n_k}$,
\[
D^1_{o,\textbf{a}_1}(f)(\textbf{x}_1) := m(f(\textbf{x}_1),f(\textbf{a}_1),o)
\]
where $f(\textbf{y}_1) \in \Pol_{n_1} A$ and 
\[
D^{k+1}_{o,(\textbf{a}_1,\ldots,\textbf{a}_{k+1})}(f)(\textbf{x}_1,\ldots,\textbf{x}_{k+1}) := \hspace{6.5cm}
\]
\[
\begin{bmatrix}
D^{k}_{o,(\textbf{a}_1,\ldots,\textbf{a}_k)}(f(\textbf{y}_1,\ldots,\textbf{y}_{k+1})_{S_{k+1}}[\textbf{x}_{k+1}])(\textbf{x}_1,\ldots,\textbf{x}_k) \\
D^{k}_{o,(\textbf{a}_1,\ldots,\textbf{a}_k)}(f(\textbf{y}_1,\ldots,\textbf{y}_{k+1})_{S_{k+1}}[\textbf{a}_{k+1}])(\textbf{x}_1,\ldots,\textbf{x}_k) \\
o
\end{bmatrix}
\]
where $f(\textbf{y}_1,\ldots,\textbf{y}_{k+1}) \in \Pol_{k+1} \textbf{A}$ and $S_{k+1}$ is the set of coordinates corresponding to $\textbf{y}_{k+1}$.
\end{definition}

In \cite{supernil}, $m$ was taken to be a Mal'cev term for $\textbf{A}$. We will be interested in the case when $m$ is taken to be a weak n-difference term and even something a little more general (see Definition \ref{def:fterm}).

\begin{example}
Let $m,p \in \Pol_3 \textbf{A}$. For fixed $o,u,x_3 \in A$, 
\[
(y_1,y_2) \longmapsto m(p(y_1,y_2,x_3),p(y_1,y_2,u),0) \in \Pol_2 \textbf{A}.
\]
Then for the vector $(a_1,a_2) \in A^2$ the difference operator is given for all $x_1,x_2 \in A$ by
\[
D^{2}_{o,(a_1,a_2)}(m(p(y_1,y_2,x_3),p(y_1,y_2,u),o))(x_1,x_2) \hspace{4.4cm}
\]
\begin{eqnarray*}
&=& 
m \begin{bmatrix}
D^1_{o,a_1}(m(p(y_1,x_2,x_3),p(x_1,x_2,u),o))(x_1) \\
D^1_{o,a_1}(m(p(y_1,a_2,x_3),p(x_1,a_2,u),o))(x_1) \\
o
\end{bmatrix} \\
&=& m
\begin{bmatrix}
m \begin{bmatrix}
m(p(x_1,x_2,x_3),p(x_1,x_2,u),o)) \\
m(p(a_1,x_2,x_3),p(a_1,x_2,u),o)) \\
o
\end{bmatrix}  \\
m \begin{bmatrix}
m(p(x_1,a_2,x_3),p(x_1,a_2,u),o)) \\
m(p(a_1,a_2,x_3),p(a_1,a_2,u),o)) \\
o
\end{bmatrix} \\
o
\end{bmatrix} 
\end{eqnarray*}
\end{example}

The following definition slightly generalizes the notion of absorbing polynomials to include non-unary vectors and relativizes to non-trivial congruences.

\begin{definition}
Let $f(\textbf{x}_1,\ldots,\textbf{x}_m) \in \Pol_n \textbf{A}$ with $n=|S_1| + \cdots + |S_m|$ where $S_i$ denotes the subset of coordinates corresponding to the vector of variables $\textbf{x}_i$. Let $\theta \in \Con \textbf{A}$, $a \in A$ and $(\textbf{a}_1,\ldots,\textbf{a}_m) \in A^n$ such that the restriction $(\textbf{a}_1,\ldots,\textbf{a}_m)\restriction_{S_i} = \textbf{a}_i$ for $i=1,\ldots,m$. We say $f$ $\theta$-absorbs $(\textbf{a}_1,\ldots,\textbf{a}_n)$ to $a$ if $f(\textbf{b}_1,\ldots,\textbf{b}_m) \equiv_{\theta} a$ whenever $(\textbf{b}_1,\ldots,\textbf{b}_m)\restriction_{S_i} = \textbf{a}_i$ for some $1 \leq i \leq m$.
\end{definition}

If we only allow unary vectors, then this is just the notion of absorption in the quotient $\textbf{A}/\theta$ from Definition \ref{def:absorb}.

\begin{definition}\label{def:fterm}
Let $\textbf{A}$ be an algebra and $f:\Con \textbf{A} \rightarrow \Con \textbf{A}$ an order-preserving map. A term $p(x,y,z)$ is a weak $f$-term of $\textbf{A}$ if for all $\theta \in \Con \textbf{A}$ and $(a,b) \in \theta$, 
\[
p(a,b,b) \ f(\theta) \ a \ f(\theta) \ p(b,b,a).
\]
We say $p(x,y,z)$ is a weak $f$-term for a variety $\mathcal V$ if for each algebra in the variety there is an interpretation of $f$ in the congruence lattice such that $p$ is a weak $f$-term. 
\end{definition}

Any ternary term is a weak $f$-term where $f$ is the constant map which maps every congruence to the total relation. When $f$ is the constant map which maps every congruence to the identity relation, then a weak $f$-term is a Mal'cev term. Any ternary idempotent term is a weak $f$-term where $f$ is the identity map. The next example is closer to the intended application.

\begin{example}\label{ex:nilterms}
A weak m-difference term $q(x,y,z)$ for a variety $\mathcal V$ is a weak $f$-term where $f(\theta)=[\theta,\ldots,\theta]_m$. Let $\textbf{A} \in \mathcal V$. There is a clear extension of the derived series to the m-commutator: for $\theta \in \Con \textbf{A}$, define $_{m}[\theta]^{0}=\theta$ and $_{m}[\theta]^{k+1}=[_{m}[\theta]^{k},\ldots,_{m}[\theta]^{k}]_m$ for $k \in \mathds{N}$. Recursively define terms by setting 
\begin{equation}\label{eq:nilterms}
q_1(x,y,z):=q(x,y,z) \ \ , \ \ q_{n+1}(x,y,z):=q(x,q_n(x,y,y),q_n(x,y,z)) 
\end{equation}
for $n \geq 2$. A straight-forward argument shows that for any $(x,y) \in \theta$,  
\[
q_n(x,y,y) \ _{m}[\theta]^n \ x \ _{m}[\theta]^n \ q_n(y,y,x); 
\]
therefore, $q_n(x,y,z)$ is a weak $f$-term where $f(\theta) =$   $_{m}[\theta]^n$. Using (HC2) and (HC3), this implies that for any solvable or nilpotent algebra in the variety, some $q_n$ will be a Mal'cev term. 
\end{example}

\begin{definition}
Let $\textbf{A}$ be an algebra, $\theta,\theta_1,\ldots,\theta_n \in \Con \textbf{A}$ and $f:\Con \textbf{A} \rightarrow \Con \textbf{A}$ an order-preserving map. Define 
\begin{eqnarray*}
T^{\theta}_{n,f}(\theta_1,\ldots,\theta_n) &=& \{\left(g(\textbf{b}_1,\ldots,\textbf{b}_n),g(\textbf{a}_1,\ldots,\textbf{a}_n)\right) : \mathrm{ each } \ \textbf{a}_i \equiv_{\theta_i} \textbf{b}_i, g \in \Pol_n \textbf{A}, \\ 
&& g \ \mathrm{ restricted } \ \mathrm{ to } \ \prod_{i=1}^n \{\textbf{a}_i,\textbf{b}_i\} \ f(\theta) \mathrm{-absorbs } \ (\textbf{a}_1,\ldots,\textbf{a}_n) \}
\end{eqnarray*}
\end{definition}

In the special instance when $f(\theta)=[\theta,\ldots,\theta]_k$ and each $\theta_i=\theta$ we will write $T^{\theta}_{n,k}=T^{\theta}_{n,f}(\theta,\ldots,\theta)$. The following lemma is the main tool for establishing the theorems in this section.

\begin{lemma}\label{lem:central}
Let $\mathcal V$ be a variety with a weak $f$-term, $\textbf{A} \in \mathcal V$ and $\gamma, \theta,\theta_1,\ldots,\theta_n \in \Con \textbf{A}$ such that each $\theta_i \leq \theta$. If $\textbf{a}_1,\ldots, \textbf{a}_{n-1}, \textbf{b}_1,\ldots, \textbf{b}_{n-1} , \textbf{u}, \textbf{v}$ are vectors in $A$ and $p(\textbf{x}_1,\ldots,\textbf{x}_{n}) \in \Pol_n \textbf{A}$ such that 
\begin{enumerate}
 
	\item $\textbf{a}_i \equiv_{\theta_i} \textbf{b}_i$ and $\textbf{u} \equiv_{\theta_n} \textbf{v}$ for $i=1,\ldots,n-1$,

	\item $p(\textbf{z}_1,\ldots, \textbf{z}_{n-1},\textbf{u}) \ \equiv_{\gamma} \ p(\textbf{z}_1,\ldots, \textbf{z}_{n-1},\textbf{v})$ for all $(\textbf{z}_1,\ldots,\textbf{z}_{n-1}) \in \left\{\textbf{a}_1, \textbf{b}_1\right\} \times \cdots \times \left\{\textbf{a}_{n-1},\textbf{b}_{n-1}\right\} \backslash \left\{(\textbf{b}_1,\ldots,\textbf{b}_{n-1})\right\}$,
	
	\item $T^{\theta}_{n,f}(\theta_1,\ldots,\theta_n) \subseteq \gamma$, 

\end{enumerate}
then  
\[
p(\textbf{b}_1,\ldots,\textbf{b}_{n-1},\textbf{u}) \ f(\theta) \, \circ \, \gamma \, \circ \, f(\theta) \ p(\textbf{b}_1,\ldots,\textbf{b}_{n-1},\textbf{v}).
\]
\end{lemma}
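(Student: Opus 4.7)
The plan is to mirror the Mal'cev-case proof of the analogous centralizer implication from \cite[Prop.~6.7]{supernil}, with the Mal'cev term replaced throughout by the weak $f$-term $q$. Each invocation of a Mal'cev identity in the original argument now produces only an $f(\theta)$-equivalence, and these errors should accumulate into exactly the two $f(\theta)$-factors flanking a single central $\gamma$-link delivered by hypothesis (3).

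Concretely, I would set $\textbf{a} = (\textbf{a}_1, \ldots, \textbf{a}_{n-1})$, $\textbf{b} = (\textbf{b}_1, \ldots, \textbf{b}_{n-1})$, fix the anchor $o := p(\textbf{a}, \textbf{u})$, and, using $q$ as the base ternary operation, form
\[
g(\textbf{x}_1, \ldots, \textbf{x}_{n-1}, \textbf{y}) := D^{n-1}_{o,\,(\textbf{a}_1, \ldots, \textbf{a}_{n-1})}(p)(\textbf{x}_1, \ldots, \textbf{x}_{n-1}, \textbf{y})
\]
(applying the difference operator only to the first $n-1$ vector arguments and treating $\textbf{y}$ as a parameter), and then promote to absorption in the last coordinate via
\[
h(\textbf{x}_1, \ldots, \textbf{x}_{n-1}, \textbf{y}) := q\bigl(g(\textbf{x}_1, \ldots, \textbf{x}_{n-1}, \textbf{y}),\, g(\textbf{x}_1, \ldots, \textbf{x}_{n-1}, \textbf{u}),\, o\bigr).
\]
A straightforward induction on the recursion depth of $D^{n-1}$, using $q(x, x, y) \, f(\theta) \, y$ repeatedly (valid because $\theta_i \leq \theta$ keeps all arguments of $q$ inside the $\theta$-block of $o$), will show that $h$ $f(\theta)$-absorbs $(\textbf{a}_1, \ldots, \textbf{a}_{n-1}, \textbf{u})$ in all $n$ coordinates on the restriction $\prod \{\textbf{a}_i, \textbf{b}_i\} \times \{\textbf{u}, \textbf{v}\}$, so hypothesis (3) will supply the link $h(\textbf{b}, \textbf{v}) \equiv_\gamma h(\textbf{a}, \textbf{u}) \equiv_{f(\theta)} o$.

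The second step is to unwind the nested $q$-structure of $h(\textbf{b}, \textbf{y})$ and translate this $\gamma$-link back to $p$. Hypothesis (2) allows each internal $p$-value $p(\textbf{z}, \textbf{v})$ with some $\textbf{z}_i = \textbf{a}_i$ to be $\gamma$-replaced by $p(\textbf{z}, \textbf{u})$, and the identity $q(x, y, y) \, f(\theta) \, x$ then collapses successive layers once an anchor $o$ appears in the third slot of a $q$-layer. Composing the substitutions should yield relations of the form $g(\textbf{b}, \textbf{v}) \equiv_{\gamma \circ f(\theta)} p(\textbf{b}, \textbf{v})$ and $g(\textbf{b}, \textbf{u}) \equiv_{f(\theta)} p(\textbf{b}, \textbf{u})$, which will transport the $h$-level $\gamma$-link to the desired $p(\textbf{b}, \textbf{u}) \, f(\theta) \circ \gamma \circ f(\theta) \, p(\textbf{b}, \textbf{v})$ after collapsing intermediate chains by $f(\theta) \circ f(\theta) = f(\theta)$ and $\gamma \circ \gamma = \gamma$.

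The hard part will be the bookkeeping in the second step: ensuring that the $\gamma$-substitutions from hypothesis (2) and the hypothesis-(3) $\gamma$-link all combine, via transitivity, into a single $\gamma$-coordinate of the final composition, with only $f(\theta)$-steps on either side. This seems to require organizing $h$ so that every hypothesis-(2) $\gamma$-substitution occurs at a $q$-layer whose third slot is already the anchor $o$, so that each $\gamma$-substitution is immediately followed by a $q(x, y, y) \, f(\theta) \, x$ collapse rather than by a further non-$\gamma$ manipulation. A secondary subtlety is confirming that what survives inside $h(\textbf{b}, \textbf{y})$ after all collapses is $f(\theta)$-close to $p(\textbf{b}, \textbf{y})$ rather than to $o$; this is precisely the feature that distinguishes the iterated-difference construction $D^{n-1}(p)$ from cruder absorbing polynomials, and it is where the proof most closely tracks the Mal'cev-case calculation of \cite{supernil}.
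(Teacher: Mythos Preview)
Your overall strategy---build an $f(\theta)$-absorbing polynomial via the difference operator, invoke hypothesis (3) for a $\gamma$-link, then unwind back to $p$---matches the paper's. But two specific choices in your construction create a genuine obstruction in the unwinding step, and this is not merely bookkeeping.

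First, the anchor. You take $o := p(\textbf{a},\textbf{u})$; the paper takes $w := p(\textbf{b},\textbf{u})$. Second, the nesting order. You apply $D^{n-1}$ to $p$ and then wrap the result with $q$ to obtain absorption in the last coordinate; the paper wraps \emph{first}, setting
\[
t(\textbf{x}_1,\ldots,\textbf{x}_n) := D^{n-1}_{w,(\textbf{a}_1,\ldots,\textbf{a}_{n-1})}\bigl(m(p(\textbf{y},\textbf{x}_n),p(\textbf{y},\textbf{u}),w)\bigr)(\textbf{x}_1,\ldots,\textbf{x}_{n-1}),
\]
and applies the difference operator to this already-wrapped function. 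With the paper's choices, the absorption value is $w = p(\textbf{b},\textbf{u})$, which is one endpoint of the target relation, and at the deepest level of the recursion at $(\textbf{b},\textbf{v})$ one finds $m(p(\textbf{b},\textbf{v}),p(\textbf{b},\textbf{u}),w) = m(p(\textbf{b},\textbf{v}),w,w) \equiv_{f(\theta)} p(\textbf{b},\textbf{v})$, the other endpoint.

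With your choices, the absorption lands you at $o = p(\textbf{a},\textbf{u})$, which does not appear in the desired relation, and the unwinding stalls: you get $h(\textbf{b},\textbf{v}) = q\bigl(g(\textbf{b},\textbf{v}),\,g(\textbf{b},\textbf{u}),\,o\bigr)$, but by your own computation $g(\textbf{b},\textbf{u}) \equiv_{f(\theta)} p(\textbf{b},\textbf{u}) \neq o$, so neither weak $f$-term identity $q(x,y,y)\equiv_{f(\theta)} x$ nor $q(y,y,x)\equiv_{f(\theta)} x$ applies. There is no way to extract $p(\textbf{b},\textbf{v})$ from this expression, and the $\gamma$-link you obtained from (3) connects $h(\textbf{b},\textbf{v})$ to $o$ rather than to either $p(\textbf{b},\textbf{u})$ or $p(\textbf{b},\textbf{v})$. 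The fix is precisely the paper's: anchor at $p(\textbf{b},\textbf{u})$ and put the $m$-wrap inside the difference operator so that the innermost layer, evaluated at $\textbf{b}$, collapses via $m(x,w,w)\equiv_{f(\theta)} x$.
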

\begin{proof}
Let $m(x,y,z)$ be the weak $f$-term for the variety $\mathcal V$. Note $p(\textbf{c}_1,\ldots,\textbf{c}_{n}) \equiv_{\theta} p(\textbf{b}_1,\ldots,\textbf{b}_{n-1},\textbf{u})$ for all $(\textbf{c}_1,\ldots,\textbf{c}_{n}) \in \left\{\textbf{a}_1, \textbf{b}_1\right\} \times \cdots \times \left\{\textbf{a}_{n-1},\textbf{b}_{n-1}\right\} \times \{\textbf{u},\textbf{v}\}$ since each $\theta_i \leq \theta$. For any choice of $w \equiv_{\theta} p(\textbf{b}_1,\ldots,\textbf{b}_{n-1},\textbf{u})$ define 
\[
t(\textbf{x}_1,\ldots,\textbf{x}_n) := \hspace{9cm}
\]
\[
D^{n-1}_{w,(\textbf{a}_1,\ldots,\textbf{a}_{n-1})}(m(p(\textbf{y}_1,\ldots,\textbf{y}_{n-1},\textbf{x}_n),p(\textbf{y}_1,\ldots,\textbf{y}_{n-1},\textbf{u}),w))(\textbf{x}_1,\ldots,\textbf{x}_{n-1}).
\]
We first show inductively on $n \geq 2$ that the data (1) and (2) implies the polynomial $t(\textbf{x}_1,\ldots,\textbf{x}_n)$ $f(\theta)$-absorbs $(\textbf{a}_1,\ldots,\textbf{a}_{n-1},\textbf{u})$ to $w$ when restricted to the set $\{\textbf{a}_1, \textbf{b}_1\} \times \cdots \times \{\textbf{a}_{n-1},\textbf{b}_{n-1}\} \times \{\textbf{u},\textbf{v}\}$. Consider vectors $(\textbf{c}_1,\ldots,\textbf{c}_{n-1},\textbf{c}_n) \in \{\textbf{a}_1, \textbf{b}_1\} \times \cdots \times \{\textbf{a}_{n-1},\textbf{b}_{n-1}\} \times \{\textbf{u},\textbf{v}\}$ such that some $\textbf{c}_i = \textbf{a}_i$ or $\textbf{c}_n=\textbf{u}$.

In the base binary case, we have
\begin{eqnarray*}
t(\textbf{c}_1,\textbf{u}) &=& 
m \begin{bmatrix}
m(p(\textbf{c}_1,\textbf{u}),p(\textbf{c}_1,\textbf{u}),w) \\
m(p(\textbf{a}_1,\textbf{u}),p(\textbf{a}_1,\textbf{u}),w) \\
w
\end{bmatrix} \ f(\theta) \ m(w,w,w)=w
\end{eqnarray*}
and
\begin{eqnarray*}
t(\textbf{a}_1,\textbf{c}_2) &=& 
m \begin{bmatrix}
m(p(\textbf{a}_1,\textbf{c}_2),p(\textbf{a}_1,\textbf{u}),w) \\
m(p(\textbf{a}_1,\textbf{c}_2),p(\textbf{a}_1,\textbf{u}),w) \\
w
\end{bmatrix} \ f(\theta) \ w
\end{eqnarray*}
since $m(p(\textbf{a}_1,\textbf{c}_2),p(\textbf{a}_1,\textbf{u}),w) \equiv_{\theta} m(w,w,w)=w$ and $m$ is a weak $f$-term. Now assume the result for $n-1 \geq 2$. Define the polynomials 
\[
q_1(\textbf{x}_1,\ldots,\textbf{x}_{n-1}):=p(\textbf{x}_1,\ldots,\textbf{x}_{n-2},\textbf{c}_{n-1},\textbf{x}_{n-1})
\] 
and 
\[
q_2(\textbf{x},\ldots,\textbf{x}_{n-1}):=p(\textbf{x}_1,\ldots,\textbf{x}_{n-2},\textbf{a}_{n-1},\textbf{x}_{n-1})
\] 
and note (2) holds for $q_1$ and $q_2$. Suppose $\textbf{c}_{i}=\textbf{a}_{i}$ for some $i \neq n-1$. Set $\bar{\textbf{y}}=(\textbf{y}_1,\ldots,\textbf{y}_{n-2})$. Then the inductive hypothesis applied to $q_1,q_2$ yields
\[
t(\textbf{c}_1,\ldots,\textbf{c}_n) \hspace{12cm}
\]
\begin{eqnarray*}
 &=& D^{n-1}_{w,(\textbf{a}_1,\ldots,\textbf{a}_{n-1})}(m(p(\bar{\textbf{y}},\textbf{y}_{n-1},\textbf{c}_n),p(\bar{\textbf{y}},\textbf{y}_{n-1},\textbf{u}),w))(\textbf{c}_1,\ldots,\textbf{c}_{n-1}) \\
&=& m
\begin{bmatrix}
D^{n-2}_{w,(\textbf{a}_1,\ldots,\textbf{a}_{n-2})}(m(q_1(\bar{\textbf{y}},\textbf{c}_n),q_1(\bar{\textbf{y}},\textbf{u}),w))(\textbf{c}_1,\ldots,\textbf{c}_{n-2}) \\
D^{n-2}_{w,(\textbf{a}_1,\ldots,\textbf{a}_{n-2})}(m(q_2(\bar{\textbf{y}},\textbf{c}_n),q_2(\bar{\textbf{y}},\textbf{u}),w))(\textbf{c}_1,\ldots,\textbf{c}_{n-2}) \\
w
\end{bmatrix} \\
&f(\theta)& m(w,w,w)=w
\end{eqnarray*}
In the case $\textbf{c}_{n-1}=\textbf{a}_{n-1}$, we see that $q_1=q_2$ and the calculation follows by the inductive hypothesis applied to $q_2$ and the fact that $m$ is a weak $f$-term. This completes the inductive step.

From $f(\theta)$-absorption, the definition of the relation $T^{\theta}_{n,f}(\theta_1,\ldots,\theta_n)$ and (3) we have 
\[
(t(\textbf{b}_1,\ldots,\textbf{v}), t(\textbf{a}_1,\ldots,\textbf{u})) \in T^{\theta}_{n,f}(\theta_1,\ldots,\theta_n) \subseteq \gamma
\]
which implies 
\[
t(\textbf{b}_1,\ldots,\textbf{v}) \ \gamma \ t(\textbf{a}_1,\ldots,\textbf{u}) \ f(\theta) \ p(\textbf{b}_1,\ldots,\textbf{b}_{n-1},\textbf{u}).
\]
To complete the argument we will show 
\[
t(\textbf{b}_1,\ldots,\textbf{v}) \ \gamma \circ f(\theta) \ p(\textbf{b}_1,\ldots,\textbf{b}_{n-1},\textbf{v})
\] 
by induction on $n$. Set $w=p(\textbf{b}_1,\ldots,\textbf{b}_{n-1},\textbf{u})$. In the binary case, 
\begin{eqnarray*}
t(\textbf{b}_1,\textbf{v}) &=& 
m \begin{bmatrix}
m(p(\textbf{b}_1,\textbf{v}),p(\textbf{b}_1,\textbf{u}),p(\textbf{b}_1,\textbf{u})) \\
m(p(\textbf{a}_1,\textbf{v}),p(\textbf{a}_1,\textbf{u}),p(\textbf{b}_1,\textbf{u})) \\
p(\textbf{b}_1,\textbf{u})
\end{bmatrix} \\ 
&\gamma&  
m \begin{bmatrix}
m(p(\textbf{b}_1,\textbf{v}),p(\textbf{b}_1,\textbf{u}),p(\textbf{b}_1,\textbf{u})) \\
m(p(\textbf{a}_1,\textbf{u}),p(\textbf{a}_1,\textbf{u}),p(\textbf{b}_1,\textbf{u})) \\
p(\textbf{b}_1,\textbf{u})
\end{bmatrix} \\
&f(\theta)& m(p(\textbf{b}_1,\textbf{v}),p(\textbf{b}_1,\textbf{u}),p(\textbf{b}_1,\textbf{u})) \\ 
&f(\theta)& p(\textbf{b}_1,\textbf{v}).
\end{eqnarray*}
Now assume the result for $n-1 \geq 2$. Since the difference operator is a recursive composition of polynomials, by setting $\bar{\textbf{y}}=(\textbf{y}_1,\ldots,\textbf{y}_{n-2})$ we see that   
\[
D^{n-2}_{w,(\textbf{a}_1,\ldots,\textbf{a}_{n-2})}(m(p(\textbf{y}_1,\ldots,\textbf{a}_{n-1},\textbf{v}),p(\textbf{y}_1,\ldots,\textbf{a}_{n-1},\textbf{u}),w))(\textbf{b}_1,\ldots,\textbf{b}_{n-2}) \hspace{3.2cm}
\]
\begin{eqnarray*}
&\gamma& D^{n-2}_{w,(\textbf{a}_1,\ldots,\textbf{a}_{n-2})}(m(p(\bar{\textbf{y}},\textbf{a}_{n-1},\textbf{u}),p(\bar{\textbf{y}},\textbf{a}_{n-1},\textbf{u}),w))(\textbf{b}_1,\ldots,\textbf{b}_{n-2}) \\
&=& D^{n-2}_{w,(\textbf{a}_1,\ldots,\textbf{a}_{n-2})}(m(q_2(\bar{\textbf{y}},\textbf{u}),q_2(\bar{\textbf{y}},\textbf{u}),w))(\textbf{b}_1,\ldots,\textbf{b}_{n-2}) \\
&f(\theta)& w 
\end{eqnarray*}
using the data in (2) and $f(\theta)$-absorption applied to $q_2$. If we define $q_1(\textbf{x}_1,\ldots,\textbf{x}_{n-1}):=p(\textbf{x}_1,\ldots,\textbf{x}_{n-2},\textbf{b}_{n-1},\textbf{x}_{n-1})$, then 
\[
t(\textbf{b}_1,\ldots,\textbf{b}_{n-1},\textbf{v})= \hspace{12cm}
\]
\begin{eqnarray*}
 &=& D^{n-1}_{w,(\textbf{a}_1,\ldots,\textbf{a}_{n-1})}(m(p(\bar{\textbf{y}},\textbf{y}_{n-1},\textbf{v}),p(\bar{\textbf{y}},\textbf{y}_{n-1},\textbf{u}),w))(\textbf{b}_1,\ldots,\textbf{b}_{n-1}) \\
&=& m
\begin{bmatrix}
D^{n-2}_{w,(\textbf{a}_1,\ldots,\textbf{a}_{n-2})}(m(q_1(\bar{\textbf{y}},\textbf{v}),q_1(\bar{\textbf{y}},\textbf{u}),w))(\textbf{b}_1,\ldots,\textbf{b}_{n-2}) \\
D^{n-2}_{w,(\textbf{a}_1,\ldots,\textbf{a}_{n-2})}(m(p(\bar{\textbf{y}},\textbf{a}_{n-1},\textbf{v}),p(\bar{\textbf{y}},\textbf{a}_{n-1},\textbf{u}),w))(\textbf{b}_1,\ldots,\textbf{b}_{n-2})\\
w
\end{bmatrix} \\
&\gamma \circ f(\theta)& m 
\begin{bmatrix}
q_1(\textbf{b}_1,\ldots,\textbf{b}_{n-2},\textbf{v}) \\
w \\
w
\end{bmatrix} \\
&=& m
\begin{bmatrix}
p(\textbf{b}_1,\ldots,\textbf{b}_{n-1},\textbf{v}) \\
w \\
w
\end{bmatrix} \ f(\theta) \ p(\textbf{b}_1,\ldots,\textbf{b}_{n-1},\textbf{v})\\
\end{eqnarray*}
using the inductive hypothesis on the $q_1$ part. This completes the induction and the proof of the lemma.
\end{proof}

\begin{proposition}\label{cor:basis}
Let $\mathcal V$ be a variety with a weak $f$-term and $\textbf{A} \in \mathcal V$. If $\theta,\theta_1,\ldots,\theta_n \in \Con \textbf{A}$ such that each $\theta_i \leq \theta$ and $f(\theta)=0$, then $\Cg^{\textbf{A}}(T^{\theta}_{n,f}(\theta_1,\ldots,\theta_n)) = [\theta_1,\ldots,\theta_n]$. 
\end{proposition}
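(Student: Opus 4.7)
The plan is to establish the two inclusions separately. The containment $\Cg^{\textbf{A}}(T^{\theta}_{n,f}(\theta_1,\ldots,\theta_n)) \leq [\theta_1,\ldots,\theta_n]$ should be a direct unpacking of the centralizer relation, and the reverse containment should fall out of Lemma \ref{lem:central} once we observe that $f(\theta)=0$ collapses the relational product appearing in its conclusion to a single $\gamma$-step.

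For the easy direction, write $\delta=[\theta_1,\ldots,\theta_n]$, so $\textbf{A}\vDash \mathrm{C}(\theta_1,\ldots,\theta_n;\delta)$ by definition, and fix a generator
$(g(\textbf{b}_1,\ldots,\textbf{b}_n), g(\textbf{a}_1,\ldots,\textbf{a}_n)) \in T^{\theta}_{n,f}(\theta_1,\ldots,\theta_n)$.
Since $f(\theta)=0$, $g$ restricted to $\prod\{\textbf{a}_i,\textbf{b}_i\}$ genuinely absorbs $(\textbf{a}_1,\ldots,\textbf{a}_n)$ to the value $g(\textbf{a}_1,\ldots,\textbf{a}_n)$. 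For any tuple $(\textbf{z}_1,\ldots,\textbf{z}_{n-1}) \in \{\textbf{a}_1,\textbf{b}_1\}\times\cdots\times\{\textbf{a}_{n-1},\textbf{b}_{n-1}\}\setminus \{(\textbf{b}_1,\ldots,\textbf{b}_{n-1})\}$, some $\textbf{z}_i=\textbf{a}_i$, so
\[
g(\textbf{z}_1,\ldots,\textbf{z}_{n-1},\textbf{a}_n) = g(\textbf{a}_1,\ldots,\textbf{a}_n) = g(\textbf{z}_1,\ldots,\textbf{z}_{n-1},\textbf{b}_n),
\]
trivially $\delta$-related. The premises of $\mathrm{C}(\theta_1,\ldots,\theta_n;\delta)$ thus apply, yielding $g(\textbf{b}_1,\ldots,\textbf{b}_n)\equiv_{\delta} g(\textbf{b}_1,\ldots,\textbf{b}_{n-1},\textbf{a}_n)=g(\textbf{a}_1,\ldots,\textbf{a}_n)$, where the final equality uses absorption at the last coordinate. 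Since this holds on generators, $\Cg^{\textbf{A}}(T^{\theta}_{n,f}(\theta_1,\ldots,\theta_n))\leq \delta$.

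For the reverse inclusion, set $\gamma=\Cg^{\textbf{A}}(T^{\theta}_{n,f}(\theta_1,\ldots,\theta_n))$; it suffices to verify $\textbf{A}\vDash \mathrm{C}(\theta_1,\ldots,\theta_n;\gamma)$, which will then give $[\theta_1,\ldots,\theta_n]\leq \gamma$. Suppose we are given $\textbf{a}_i\equiv_{\theta_i}\textbf{b}_i$, $\textbf{u}\equiv_{\theta_n}\textbf{v}$, and a polynomial $p$ with $p(\textbf{z}_1,\ldots,\textbf{z}_{n-1},\textbf{u})\equiv_{\gamma} p(\textbf{z}_1,\ldots,\textbf{z}_{n-1},\textbf{v})$ on the usual product indexing set. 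These are precisely conditions (1) and (2) of Lemma \ref{lem:central}, while condition (3), $T^{\theta}_{n,f}(\theta_1,\ldots,\theta_n)\subseteq \gamma$, holds by definition of $\gamma$. The lemma therefore supplies
\[
p(\textbf{b}_1,\ldots,\textbf{b}_{n-1},\textbf{u}) \ f(\theta)\,\circ\,\gamma\,\circ\,f(\theta) \ p(\textbf{b}_1,\ldots,\textbf{b}_{n-1},\textbf{v}).
\]
Because $f(\theta)=0_A$, the outer factors collapse, leaving $p(\textbf{b}_1,\ldots,\textbf{b}_{n-1},\textbf{u})\equiv_{\gamma} p(\textbf{b}_1,\ldots,\textbf{b}_{n-1},\textbf{v})$, which is exactly the conclusion required by the centralizer relation.

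The main obstacle in this argument has already been absorbed into Lemma \ref{lem:central}; given that technical tool, the proof here is essentially bookkeeping, with the crucial structural input being the hypothesis $f(\theta)=0$, which both sharpens $f(\theta)$-absorption to strict absorption (needed for the forward inclusion) and degenerates $f(\theta)\circ\gamma\circ f(\theta)$ to $\gamma$ (needed for the reverse).
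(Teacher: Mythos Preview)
Your proof is correct and follows essentially the same approach as the paper: the forward inclusion is obtained by showing each generator of $T^{\theta}_{n,f}(\theta_1,\ldots,\theta_n)$ lies in $[\theta_1,\ldots,\theta_n]$ via absorption plus one application of the centralizer relation, and the reverse inclusion is a direct appeal to Lemma \ref{lem:central} with $\gamma=\Cg^{\textbf{A}}(T^{\theta}_{n,f}(\theta_1,\ldots,\theta_n))$, using $f(\theta)=0$ to collapse the relational product. Your write-up is in fact a bit more explicit than the paper's, which is terse and contains a couple of apparent typos (writing $[\theta,\ldots,\theta]_n$ and $\mathrm{C}(\theta,\ldots,\theta;\cdot)$ where $[\theta_1,\ldots,\theta_n]$ and $\mathrm{C}(\theta_1,\ldots,\theta_n;\cdot)$ are meant).
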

\begin{proof}
If $(g(\textbf{a}_1,\ldots,\textbf{a}_n),g(\textbf{b}_1,\ldots,\textbf{b}_n)) \in T^{\theta}_{n,f}(\theta_1,\ldots,\theta_n))$, then 
\[
g(\textbf{z}_1,\ldots,\textbf{z}_{n-1},\textbf{a}_n) \ f(\theta) \ g(\textbf{z}_1,\ldots,\textbf{z}_{n-1},\textbf{b}_n)
\] 
for all $(\textbf{z}_1,\ldots,\textbf{z}_{n-1}) \in \left\{\textbf{a}_1, \textbf{b}_1\right\} \times \cdots \times \left\{\textbf{a}_{n-1},\textbf{b}_{n-1}\right\} \backslash \left\{(\textbf{b}_1,\ldots,\textbf{b}_{n-1})\right\}$ since some $\textbf{\textbf{z}}_i = \textbf{\textbf{a}}_i$ and $g(\textbf{x}_1,\ldots,\textbf{x}_n)$ $f(\theta)$-absorbs $(\textbf{a}_1,\ldots,\textbf{a}_n)$. Then 
\[
g(\textbf{a}_1,\ldots,\textbf{a}_{n-1},\textbf{a}_n) \ f(\theta) \ g(\textbf{b}_1,\ldots,\textbf{b}_{n-1},\textbf{a}_n) \ [\theta,\ldots,\theta]_n \ g(\textbf{b}_1,\ldots,\textbf{b}_{n-1},\textbf{b}_n)
\]
where the second congruence equivalence comes from the centralizer relation applied to the commutator congruence; therefore, $\Cg^{\textbf{A}}(T^{\theta}_{n,f}(\theta_1,\ldots,\theta_n)) \leq [\theta,\ldots,\theta]_{n}$ because $f(\theta)=0$.

For the reverse inequality, an application of Lemma \ref{lem:central} yields 
\[\textbf{A} \vDash \mathrm{C}(\theta,\ldots,\theta;\Cg^{\textbf{A}}(T^{\theta}_{n,f}(\theta_1,\ldots,\theta_n)))
\] 
which implies $[\theta,\ldots,\theta]_{n} \leq \Cg^{\textbf{A}}(T^{\theta}_{n,f}(\theta_1,\ldots,\theta_n))$.
\end{proof}

\begin{corollary}
Let $\mathcal V$ be a variety with a weak m-difference for some $m \geq 2$. If $\textbf{A} \in \mathcal V$ and $\theta \in \Con \textbf{A}$ is m-supernilpotent, then $\Cg^{\textbf{A}}(T^{\theta}_{n,m}) = [\theta,\ldots,\theta]_{n}$.
\end{corollary}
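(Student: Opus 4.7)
The plan is to derive this as an immediate specialization of Proposition \ref{cor:basis}. Define the order-preserving map $f : \Con \textbf{A} \to \Con \textbf{A}$ by $f(\psi) := [\psi,\ldots,\psi]_m$; that $f$ is order-preserving is precisely (HC2). The first step is to note that a weak m-difference term $q(x,y,z)$ for $\mathcal V$ is, by its very definition, a weak $f$-term for this choice of $f$: the defining identities
\[
q(a,b,b) \, [\psi,\ldots,\psi]_m \, a \, [\psi,\ldots,\psi]_m \, q(b,b,a) \ \ \text{ for } (a,b) \in \psi
\]
translate directly into the weak $f$-term condition of Definition \ref{def:fterm}. This is already recorded in Example \ref{ex:nilterms}.

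Next I would verify the remaining hypotheses of Proposition \ref{cor:basis} for the specialization $\theta_1 = \cdots = \theta_n = \theta$. Each $\theta_i \leq \theta$ is trivial, and the assumption that $\theta$ is m-supernilpotent is exactly the statement $f(\theta) = [\theta,\ldots,\theta]_m = 0_A$. Applying Proposition \ref{cor:basis} then yields
\[
\Cg^{\textbf{A}}\bigl(T^{\theta}_{n,f}(\theta,\ldots,\theta)\bigr) = [\theta,\ldots,\theta]_n.
\]
By the notational convention introduced immediately after the definition of $T^{\theta}_{n,f}(\theta_1,\ldots,\theta_n)$ --- namely $T^{\theta}_{n,m} := T^{\theta}_{n,f}(\theta,\ldots,\theta)$ when $f(\psi) = [\psi,\ldots,\psi]_m$ --- the left-hand side is exactly $\Cg^{\textbf{A}}(T^{\theta}_{n,m})$, which is the desired equality.

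Since the heavy lifting has already been carried out in Lemma \ref{lem:central} and Proposition \ref{cor:basis}, there is no genuine obstacle here; the corollary is a bookkeeping consequence that isolates the most natural and uniform statement in the case where the auxiliary map $f$ is the $m$-commutator operator and the ambient congruence is annihilated by $f$. The only thing to be careful about is matching the indices and notation between the weak $f$-term framework and the $m$-difference term framework, but both line up cleanly via Example \ref{ex:nilterms}.
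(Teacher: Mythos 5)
Your proof is correct and matches the paper's intent exactly: the corollary is stated without proof precisely because it is the immediate specialization of Proposition \ref{cor:basis} with $f(\psi) = [\psi,\ldots,\psi]_m$ and $\theta_1 = \cdots = \theta_n = \theta$, where $m$-supernilpotence supplies $f(\theta) = 0_A$ and Example \ref{ex:nilterms} identifies the weak $m$-difference term as a weak $f$-term.
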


\begin{theorem}\label{thm:ftermprop}
Let $\mathcal V$ be a variety with a weak $f$-term and $\textbf{A} \in \mathcal V$. Let $\theta,\theta_1,\ldots,\theta_n \in \Con \textbf{A}$ such that $f(\theta) \leq [\theta_1,\ldots,\theta_n] \leq \theta_1 \vee \cdots \vee \theta_n \leq \theta$.
\begin{enumerate}

	\item For any $\gamma \in \Con \textbf{A}$, $\textbf{A} \vDash \mathrm{C}(\theta_1,\ldots,\theta_n; \gamma)$ if and only if 
	\[
	[\theta_1,\ldots,\theta_n] \leq \gamma.
	\]

	\item If $\eta \leq \theta_1 \wedge \cdots \wedge \theta_n$, then 
	\[
	[\theta_1/\eta,\ldots,\theta_n/\eta] = ([\theta_1,\ldots,\theta_n] \vee \eta )/\eta
	\]
	where the commutator on the left-side of the equality is computed in the quotient algebra $\textbf{A}/\eta$.

\end{enumerate}
\end{theorem}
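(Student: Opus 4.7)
The plan is to prove (1) by invoking Lemma \ref{lem:central}, and then derive (2) from (1) via Lemma \ref{lem:cen}(4). The forward direction of (1) is immediate: by the definition of the commutator, $[\theta_1,\ldots,\theta_n]$ is the least $\delta$ with $\textbf{A} \vDash \mathrm{C}(\theta_1,\ldots,\theta_n;\delta)$, so any $\gamma$ witnessing the centralizer relation automatically contains it.

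For the reverse direction of (1), I would apply Lemma \ref{lem:central} to the chosen $\gamma$. Provided hypothesis (3) of that lemma, namely $T^{\theta}_{n,f}(\theta_1,\ldots,\theta_n) \subseteq \gamma$, is in force, the lemma produces
\[
p(\textbf{b}_1,\ldots,\textbf{b}_{n-1},\textbf{u}) \; f(\theta) \circ \gamma \circ f(\theta) \; p(\textbf{b}_1,\ldots,\textbf{b}_{n-1},\textbf{v})
\]
under the standard centralizer premise for $(\theta_1,\ldots,\theta_n;\gamma)$. Since by assumption $f(\theta) \leq [\theta_1,\ldots,\theta_n] \leq \gamma$, the composition collapses to $\gamma$, yielding exactly $\mathrm{C}(\theta_1,\ldots,\theta_n;\gamma)$.

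The crux is to verify $T^{\theta}_{n,f}(\theta_1,\ldots,\theta_n) \subseteq [\theta_1,\ldots,\theta_n]$, which together with $[\theta_1,\ldots,\theta_n] \leq \gamma$ supplies hypothesis (3). Given a generator $(g(\textbf{b}_1,\ldots,\textbf{b}_n), g(\textbf{a}_1,\ldots,\textbf{a}_n))$ of this set, I would apply the centralizer relation $\mathrm{C}(\theta_1,\ldots,\theta_n;[\theta_1,\ldots,\theta_n])$ (which holds by definition) to the polynomial $g$. For every tuple $(\textbf{z}_1,\ldots,\textbf{z}_{n-1})$ in $\prod_{i=1}^{n-1}\{\textbf{a}_i,\textbf{b}_i\} \setminus \{(\textbf{b}_1,\ldots,\textbf{b}_{n-1})\}$ some coordinate equals $\textbf{a}_i$, so the $f(\theta)$-absorption hypothesis ensures that both $g(\textbf{z}_1,\ldots,\textbf{z}_{n-1},\textbf{a}_n)$ and $g(\textbf{z}_1,\ldots,\textbf{z}_{n-1},\textbf{b}_n)$ are $f(\theta)$-congruent to $g(\textbf{a}_1,\ldots,\textbf{a}_n)$, hence $[\theta_1,\ldots,\theta_n]$-congruent to one another (because $f(\theta) \leq [\theta_1,\ldots,\theta_n]$). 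This verifies clause (3) of the centralizer definition at level $[\theta_1,\ldots,\theta_n]$, delivering $g(\textbf{b}_1,\ldots,\textbf{b}_{n-1},\textbf{a}_n) \equiv_{[\theta_1,\ldots,\theta_n]} g(\textbf{b}_1,\ldots,\textbf{b}_n)$. Chaining with the $f(\theta)$-absorption instance $g(\textbf{a}_1,\ldots,\textbf{a}_n) \equiv_{f(\theta)} g(\textbf{b}_1,\ldots,\textbf{b}_{n-1},\textbf{a}_n)$ completes the containment.

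For (2), the $\leq$ inclusion uses Lemma \ref{lem:cen}(4): letting $\tilde{\delta} \supseteq \eta$ be the congruence of $\textbf{A}$ with $\tilde{\delta}/\eta = [\theta_1/\eta,\ldots,\theta_n/\eta]$, the defining relation $\mathrm{C}(\theta_1/\eta,\ldots,\theta_n/\eta;\tilde{\delta}/\eta)$ in $\textbf{A}/\eta$ lifts to $\mathrm{C}(\theta_1,\ldots,\theta_n;\tilde{\delta})$ in $\textbf{A}$, whence $[\theta_1,\ldots,\theta_n] \leq \tilde{\delta}$. For the $\geq$ inclusion, apply part (1) with $\gamma := [\theta_1,\ldots,\theta_n] \vee \eta$ to obtain $\mathrm{C}(\theta_1,\ldots,\theta_n;\gamma)$ in $\textbf{A}$, and push this down to $\mathrm{C}(\theta_1/\eta,\ldots,\theta_n/\eta;\gamma/\eta)$ in $\textbf{A}/\eta$ via Lemma \ref{lem:cen}(4). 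The main obstacle is the verification of $T^{\theta}_{n,f}(\theta_1,\ldots,\theta_n) \subseteq [\theta_1,\ldots,\theta_n]$ sketched above; once that is in hand, both claims drop out by assembling the existing lemmas.
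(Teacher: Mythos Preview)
Your proposal is correct and follows essentially the same approach as the paper. The paper cites the first paragraph of Proposition~\ref{cor:basis} for the containment $T^{\theta}_{n,f}(\theta_1,\ldots,\theta_n) \subseteq [\theta_1,\ldots,\theta_n]$ (adapted via $f(\theta) \leq [\theta_1,\ldots,\theta_n]$ in place of $f(\theta)=0$), while you spell that argument out directly; both then invoke Lemma~\ref{lem:central} and collapse $f(\theta)\circ\gamma\circ f(\theta)$ to $\gamma$, and both derive (2) from (1) via Lemma~\ref{lem:cen}(4).
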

\begin{proof}
(1) Necessity follows since the commutator is the intersection of all congruences $\gamma$ which satisfy $\textbf{A} \vDash \mathrm{C}(\theta_1,\ldots,\theta_n;\gamma)$. Suppose $[\theta_1,\ldots,\theta_n] \leq \gamma$ and assume $p \in \Pol \textbf{A}$ with vectors $\textbf{a}_i,\textbf{b}_i$ such that $\textbf{a}_i \ \theta_i \  \textbf{b}_i$ for $i \in [n]$ and 
\[
p(\textbf{z}_1,\ldots,\textbf{z}_{n-1},\textbf{a}_n) \ \gamma \  p(\textbf{z}_1,\ldots,\textbf{z}_{n-1},\textbf{b}_n)
\]
for all $(\textbf{z}_1,\ldots,\textbf{z}_{n-1}) \in \left\{\textbf{a}_1, \textbf{b}_1\right\} \times \cdots \times \left\{\textbf{a}_{n-1},\textbf{b}_{n-1}\right\} \backslash \left\{(\textbf{b}_1,\ldots,\textbf{b}_{n-1})\right\}$. By the first paragraph of Proposition \ref{cor:basis}, $T^{\theta}_{n,f}(\theta_1,\ldots,\theta_n) \subseteq [\theta_1,\ldots,\theta_n] \leq \gamma$ because $f(\theta) \leq [\theta_1,\ldots,\theta_n]$. Now Lemma \ref{lem:central} yields 
\[
p(\textbf{b}_1,\ldots,\textbf{b}_{n-1},\textbf{a}_n) \equiv_{\gamma} p(\textbf{b}_1,\ldots,\textbf{b}_{n-1},\textbf{b}_n)
\]
since $[\theta_1,\ldots,\theta_n] \circ \gamma \circ [\theta_1,\ldots,\theta_n] \subseteq \gamma$. This shows $\textbf{A} \vDash \mathrm{C}(\theta_1,\ldots,\theta_n;\gamma)$.

(2) This is a direct application of part (1) using Lemma \ref{lem:cen}(4) and follows the now standard argument \cite[Cor 6.3]{supernil}.
\end{proof}

In varieties with n-difference terms, we can always define weak $f$-terms where $f$ is given by the lower central series. The hypothesis $f(\theta) \leq [\theta_1,\ldots,\theta_n]$ can then be satisfied, for example, when each $\theta_i$ is a nested commutator built from a fixed congruence. Fix $\theta \in \Con \textbf{A}$ and define a set of evaluated higher commutators in the following manner: 
\[
\chi^{\theta}_{1}=\{\theta\} \ \ , \ \  \chi^{\theta}_{k+1} = \{[\alpha_1,\ldots,\alpha_n]: \text{ each } \alpha_i \in \bigcup_{i \leq k} \chi^{\theta}_{i}, n \geq 2\}
\]
for $k \in \mathds{N}$. Then $\Xi(\theta)= \bigcup \chi^{\theta}_{i}$ is the set of nested higher commutators recursively evaluated starting from $\theta$.

\begin{theorem}\label{thm:nweak}
Let $\mathcal V$ be a variety with weak m-difference terms for all $m \geq 2$ and $\textbf{A} \in \mathcal V$.
\begin{enumerate}
	
	\item Let $\theta_1,\ldots,\theta_n \in \Xi(\theta)$:
	
		\begin{enumerate}
	
			\item For any $\gamma \in \Con \textbf{A}$, $\textbf{A} \vDash \mathrm{C}(\theta_1,\ldots,\theta_n; \gamma)$ if and only if 
	\[
	[\theta_1,\ldots,\theta_n] \leq \gamma.
	\]
		
			\item If $\eta \leq \theta_1 \wedge \cdots \wedge \theta_n$, then 
	\[
	[\theta_1/\eta,\ldots,\theta_n/\eta] = ([\theta_1,\ldots,\theta_n] \vee \eta )/\eta
	\]
	where the commutator on the left-side of the equality is computed in the quotient algebra $\textbf{A}/\eta$.
	
			\item For $n > m \geq 1$, 
			\[
			[\theta_{1},\ldots,\theta_m,[\theta_{m+1},\ldots,\theta_{n}]] \leq [\theta_1,\ldots,\theta_n].
			\]
	
		\end{enumerate}
	
	\item For any $n \geq 2$, the class of n-supernilpotent algebras in $\mathcal V$ forms a subvariety. 
	
	\item If $\textbf{A}$ is n-supernilpotent, then $\textbf{A}$ is nilpotent of class n-1; consequently, each cover in a supernilpotent interval is an abelian interval.

\end{enumerate}
\end{theorem}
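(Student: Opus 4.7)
The plan is to deduce parts (1), (2), and (3) in order from Theorem \ref{thm:ftermprop}, after establishing one key lemma about the congruences in $\Xi(\theta)$. I would prove by induction on the level $k$ such that $\mu \in \chi^{\theta}_k$ that for every $\mu \in \Xi(\theta)$ there exist integers $M \geq 2$ and $N \geq 1$ with ${}_M[\theta]^N \leq \mu$. The base case $\mu = \theta$ is immediate from (HC1). For the inductive step with $\mu = [\alpha_1,\ldots,\alpha_r]$ and $\alpha_j \geq {}_{m_j}[\theta]^{N_j}$, setting $M = \max(m_1,\ldots,m_r,r)$ and $N = \max(N_1,\ldots,N_r)$ gives $\alpha_j \geq {}_M[\theta]^N$ by monotonicity, (HC2) yields $\mu \geq [{}_M[\theta]^N,\ldots,{}_M[\theta]^N]_r$, and $M-r$ applications of (HC3) push the arity up to $M$ to produce $\mu \geq {}_M[\theta]^{N+1}$. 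Because $\mathcal V$ has weak $M$-difference terms for every $M \geq 2$, the iterated term $q_N$ from Example \ref{ex:nilterms} is a weak $f$-term for $f(\psi) = {}_M[\psi]^N$. Applied to $\mu = [\theta_1,\ldots,\theta_n] \in \Xi(\theta)$, the lemma supplies $M$ and $N$ with $f(\theta) = {}_M[\theta]^N \leq [\theta_1,\ldots,\theta_n]$, and since $[\theta_1,\ldots,\theta_n] \leq \theta_1 \vee \cdots \vee \theta_n \leq \theta$ is immediate from (HC1), Theorem \ref{thm:ftermprop}(1) and (2) yield parts (1a) and (1b) directly.

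Part (1c) is the main obstacle, because it requires a non-trivial adaptation of the Mal'cev proof of (HC8). Setting $\psi = [\theta_{m+1},\ldots,\theta_n]$ and $\gamma = [\theta_1,\ldots,\theta_n]$, part (1a) reduces the task to verifying $\textbf{A} \vDash \mathrm{C}(\theta_1,\ldots,\theta_m,\psi;\gamma)$. My plan is to use Proposition \ref{cor:basis} (or the corresponding restricted generating-set result coming from Theorem \ref{thm:ftermprop}(1) in the quotient $\textbf{A}/\gamma$) to express the inner centralizer pair $(\textbf{u},\textbf{v}) \in \psi$ as a combination of pairs coming from polynomials $g$ that $f(\theta)$-absorb with respect to $(\theta_{m+1},\ldots,\theta_n)$. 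Substituting $g$ into the outer polynomial $p$ produces a composite $h(\textbf{z}_1,\ldots,\textbf{z}_m,\textbf{y}_{m+1},\ldots,\textbf{y}_n) = p(\textbf{z}_1,\ldots,\textbf{z}_m,g(\textbf{y}_{m+1},\ldots,\textbf{y}_n))$ that, combined with the outer premise on $p$, should $f(\theta)$-absorb the concatenated tuple modulo $\gamma$ in the sense of $T^{\theta}_{n,f}$; Lemma \ref{lem:central} then closes the argument. The delicate bookkeeping is in showing that the outer centralizer premise for $p$ combines with the absorption of $g$ to produce the required absorption premise for $h$, with weak $f$-terms chosen consistently so that Theorem \ref{thm:ftermprop} remains available at every stage.

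Part (2) follows from (1b) together with standard closure arguments. Homomorphic image closure is immediate: if $[1_A,\ldots,1_A]_n = 0_A$, then (1b) applied with $\theta_i = 1_A$ and any $\eta \in \Con \textbf{A}$ gives $[1/\eta,\ldots,1/\eta]_n = \eta/\eta = 0_{A/\eta}$, so $\textbf{A}/\eta$ is $n$-supernilpotent. Subalgebra closure follows by observing that every polynomial of $\textbf{B} \leq \textbf{A}$ is a polynomial of $\textbf{A}$ with parameters in $B$, so the centralizer condition $\mathrm{C}(1,\ldots,1;0)$ transfers from $\textbf{A}$ to $\textbf{B}$. Product closure is routine because polynomials of a direct product act coordinate-wise, reducing $\mathrm{C}(1,\ldots,1;0)$ in $\prod \textbf{A}_i$ to its validity in each factor.

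Finally, part (3) is a direct corollary of (1c). Iterating (1c) with all arguments equal to $1_A$ and using monotonicity (HC2) reproduces the chain
\[
[1,\ldots,1]_n \geq [1,[1,\ldots,[1,1]\cdots]]
\]
with $n-1$ nested binary commutators, so $n$-supernilpotence forces nilpotence of class at most $n-1$. For the cover statement, suppose $\alpha \leq \gamma \prec \sigma \leq \beta$ inside an $n$-supernilpotent interval $(\alpha,\beta)$. Monotonicity yields $[\sigma,\ldots,\sigma]_n \leq [\beta,\ldots,\beta]_n \leq \alpha \leq \gamma$, and then (1a) gives $\textbf{A}/\gamma \vDash \mathrm{C}(\sigma/\gamma,\ldots,\sigma/\gamma;0_{A/\gamma})$. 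Iterating (1c) inside $\textbf{A}/\gamma$ collapses the nested binary commutators of $\sigma/\gamma$ down to $0_{A/\gamma}$, and since $\sigma/\gamma$ is an atom of $\Con(\textbf{A}/\gamma)$ the only possibility is $[\sigma/\gamma,\sigma/\gamma] = 0_{A/\gamma}$, which by Lemma \ref{lem:cen}(4) is exactly abelianness of the cover $(\gamma,\sigma)$.
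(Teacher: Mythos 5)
Your reduction to Theorem \ref{thm:ftermprop} via a clean induction establishing ${}_M[\theta]^N \leq \mu$ for every $\mu \in \Xi(\theta)$ is the same core idea as the paper's (the paper phrases it via ``highest arity appearing in the nested composition,'' your version is tidier), and your treatments of (1a), (1b), (2), and (3) are essentially the paper's arguments.

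The gap is in (1c), which you correctly flag as the main obstacle but leave as a sketch: you propose to take an inner pair $(\textbf{u},\textbf{v}) \in [\theta_{m+1},\ldots,\theta_n]$, express it via the generating set $T^{\theta}_{n-m,f}(\theta_{m+1},\ldots,\theta_n)$, substitute the witnessing absorbing polynomial $g$ into the outer $p$, and conclude via Lemma \ref{lem:central}. This would work if $(\textbf{u},\textbf{v})$ were itself a generator, but membership in the congruence \emph{generated} by that set a priori gives only a chain $\textbf{u} = c_1, c_2, \ldots, c_{k+1} = \textbf{v}$ of generator-applications, and substituting a single $g$ into $p$ does not produce a single pair of $T^{\theta}_{n,f}(\theta_1,\ldots,\theta_n)$. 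The substance of the paper's proof of (1c) is precisely the chain-shortening argument you call ``delicate bookkeeping'': the paper works in the quotient $\textbf{A}/\eta$ with $\eta = [f(\theta),\ldots,f(\theta)]_n$ (chosen so Proposition \ref{cor:basis} applies), normalizes each link so the polynomial absorbs from the \emph{left} endpoint using the weak difference term $q_{K+2}$, and then shows that whenever $k>1$ the term $q_{K+2}(u_1(\cdot),c_1,u_2(\cdot))$ absorbs appropriately and collapses two consecutive links into one, forcing a minimal-length chain to have $k=1$. Only then can the witness be composed into the outer polynomial to land in $T^{\theta}_{n,f}(\theta_1,\ldots,\theta_n)$. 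Without this step your argument does not close; you should also verify that your choice of quotienting by $\gamma = [\theta_1,\ldots,\theta_n]$ (rather than the paper's smaller $\eta$) still lets you invoke Proposition \ref{cor:basis}, i.e.\ that a weak $f$-term with $f(\theta/\gamma)=0$ in $\textbf{A}/\gamma$ is available, which again requires the quotient behavior from Theorem \ref{thm:ftermprop}(2).
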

\begin{proof}
(1) Since each $\theta_i \in \chi^{\theta}_{k_i}$ is a nested composition of higher commutators, let $m_i$ denote the highest arity of the commutator which appears in the composition of $\theta_i$. Then $_{m_i}[\theta]^{k_i} \leq \theta_i$. If we let $M=\max\{m_i:i \in [n]\}$ and $K=\max\{k_i: i \in [n]\}$, then $[_{M}[\theta]^{K},\ldots,$ $_{M}[\theta]^{K}]_n \leq [\theta_1,\ldots,\theta_n]$. Let $r=\max\{n,M\}$ and $q(x,y,z)$ a weak r-difference term for $\mathcal V$. Then $q_{K+1}(x,y,z)$ is a weak $f$-term for $\mathcal V$ where $f(\theta)=[_{M}[\theta]^{K},\ldots,$ $_{M}[\theta]^{K}]_n$ and $q_{K+1}$ is defined by (\ref{eq:nilterms}) in Example \ref{ex:nilterms}. Now (1a) and (1b) follow by Theorem \ref{thm:ftermprop}.

(1c) Using the same $f$ and terms $q_{K+1}$  in the previous paragraph, we have the inequalities $f(\theta) \leq [\theta_1,\ldots,\theta_n]$ and $[f(\theta),\ldots,f(\theta)]_n \leq [f(\theta),\ldots,f(\theta)]_m \leq [\theta_1,\ldots,\theta_m]$. If we set $\eta = [f(\theta),\ldots,f(\theta)]_n$, then $q_{K+2}(x,y,z)$ is a weak $g$-difference term for $\mathcal V$ where $g(\theta)=\eta$. The inequalities show the hypothesis in Theorem \ref{thm:ftermprop} is satisfied for the congruences $\theta_1,\ldots,\theta_n, [\theta_{m+1},\ldots,\theta_{n}]$. Using this, we can show that it suffices to establish the inequality in (1c) in the quotient $\textbf{A}/\eta$. To see this, repeatedly using Theorem \ref{thm:ftermprop}(2) we have  
\begin{eqnarray*}
[\theta_1,\ldots,\theta_m,[\theta_{m+1},\ldots,\theta_n]]/\eta &=& \left([\theta_1,\ldots,\theta_m,[\theta_{m+1},\ldots,\theta_n]] \vee \eta\right)/\eta \\
&=& \left[\theta_1/\eta,\ldots,\theta_m/\eta,[\theta_{m+1},\ldots,\theta_n]/\eta\right] \\
&=& [\theta_1/\eta,\ldots,\theta_m/\eta,\left([\theta_{m+1},\ldots,\theta_n] \vee \eta \right)/\eta] \\
&=& \left[\theta_1/\eta,\ldots,\theta_m/\eta,[\theta_{m+1}/\eta,\ldots,\theta_n/\eta]\right] \\
&\leq& [\theta_1/\eta,\ldots,\theta_n/\eta] \\
&=& \left([\theta_1,\ldots,\theta_n] \vee \eta \right)/\eta = [\theta_1,\ldots,\theta_n]/\eta. 
\end{eqnarray*}
Then by the Correspondence Theorem, the inequality in (1c) follows. Note, in the quotient $\textbf{A}/\eta$ we have $g(\theta/\eta)=0$; therefore, reusing the same names, we may assume $\theta \in \Con \textbf{A}$ such that $g(\theta)=0$. By Proposition \ref{cor:basis}, we have generating sets 
\begin{eqnarray*}
T'&=&T^{\theta}_{m+1,0}(\theta_1,\ldots,\theta_m,[\theta_{m+1},\ldots,\theta_n]) \\
T''&=&T^{\theta}_{n-m,0}(\theta_{m+1},\ldots,\theta_n) \\
T'''&=&T^{\theta}_{n,0}(\theta_1,\ldots,\theta_n) 
\end{eqnarray*}
for $[\theta_1,\ldots,\theta_m,[\theta_{m+1},\ldots,\theta_n]]$, $[\theta_{m+1},\ldots,\theta_n]$ and $[\theta_1,\ldots,\theta_n]$, respectively. We show $T' \subseteq T'''$.

Let $(a,b)=(h(\textbf{a}_1,\ldots,\textbf{a}_m,\textbf{a}_{m+1}),h(\textbf{b}_1,\ldots,\textbf{b}_m,\textbf{b}_{m+1})) \in T'$ where $\textbf{a}_i \, \theta_i \, \textbf{b}_i$ for $i \in [m]$, $\textbf{a}_{m+1} \, [\theta_{m+1},\ldots,\theta_n] \, \textbf{b}_{m+1}$ and $h$ absorbs $(\textbf{a}_1,\ldots,\textbf{a}_m,\textbf{a}_{m+1})$. For simplicity in the presentation of the argument, we will assume the tuples $\textbf{a}_{m+1}=a_{m+1}$, $\textbf{b}_{m+1}=b_{m+1}$ are singletons. The general argument proceeds in the same manner but with a multiplicity of indices.

Since $(a_{m+1},b_{m+1}) \in [\theta_{m+1},\ldots\theta_n]$ and the generating set $T''$ is closed under unary polynomials, there exists a sequence $a_{m+1}=c_{1},c_2,c_3,\ldots,c_{k},c_{k+1}=b_{m+1}$, polynomials $u_{i}(\textbf{x}^{i}_{m+1},\ldots,\textbf{x}^{i}_{n})$ and multivectors $(\textbf{e}^{i}_{m+1},\ldots,\textbf{e}^{i}_{n}),(\textbf{d}^{1}_{m+1},\ldots,\textbf{d}^{i}_{n})$ for $i \in [k]$ such that at each position $i \in [k]$ in the sequence we have 
\[
\textbf{e}^{i}_{m+1} \ \theta_{m+1} \textbf{d}^{i}_{m+1}, \ldots, \textbf{e}^{i}_{n} \ \theta_n \ \textbf{d}^{i}_{n},
\]
\[
c_i= u_{i}(\textbf{e}^{i}_{m+1},\ldots,\textbf{e}^{i}_{n}),\ \ \ c_{i+1}=u_{i}(\textbf{d}^{i}_{m+1},\ldots,\textbf{d}^{i}_{n})
\]
and $(u_{i}(\textbf{e}^{i}_{m+1},\ldots,\textbf{e}^{i}_{n}), u_{i}(\textbf{d}^{i}_{m+1},\ldots,\textbf{d}^{i}_{n})) \in T''$.
Note $\{c_1,\ldots,c_{k+1}\}$ is contained in a single $[\theta_{m+1},\ldots,\theta_n]$-block, and so $q_{K+2}$ satisfies the Mal'cev identities restricted to the sequence.  We can always assume $u_{i}$ absorbs $(\textbf{e}^{i}_{m+1},\ldots,\textbf{e}^{i}_{n})$ to $c_i$. Suppose this is not the case and $u_{i}$ absorbs $(\textbf{d}^{i}_{m+1},\ldots,\textbf{d}^{i}_{n})$ to $c_{i+1}$. Define a polynomial $s(\textbf{x}^{i}_{m+1},\ldots,\textbf{x}^{i}_m):= q_{K+2}(c_2,u_i(\textbf{x}^{i}_{m+1},\ldots,\textbf{x}^{i}_{n}),c_1)$. Then $s$ absorbs $(\textbf{d}^{i}_{m+1},\ldots,\textbf{d}^{i}_{n})$ to $c_{i}$. Then we have the required pattern after relabeling. Observe that this does not change the length of the sequence.

Of all such sequences which witness the congruence generation of $(a_{m+1},b_{m+1})$, take one of minimal length in $k$. We claim $k=1$. If this is not the case and $k>1$, define a polynomial 
\[
t((\textbf{x}^{1}_{m+1},\textbf{x}^{2}_{m+1}),\ldots,(\textbf{x}^{1}_{n},\textbf{x}^{2}_{n})):= q_{K+2}(u_1(\textbf{x}^{1}_{m+1},\ldots,\textbf{x}^{1}_{n}),c_1,u_2(\textbf{x}^{2}_{m+1},\ldots,\textbf{x}^{2}_{n}))
\]
and observe $(\textbf{e}^{1}_{m+1},\textbf{e}^{2}_{m+1}) \ \theta_{m+1} \ (\textbf{d}^{1}_{m+1},\textbf{d}^{2}_{m+1}),\ldots, (\textbf{e}^{1}_{n},\textbf{e}^{2}_{n}) \ \theta_{n} \ (\textbf{d}^{1}_{n},\textbf{d}^{2}_{n})$. It is not hard to see that $t$ absorbs $((\textbf{e}^{1}_{m+1},\textbf{e}^{2}_{m+1}),\ldots,(\textbf{e}^{1}_{n},\textbf{e}^{2}_{n}))$ to $c_1$ and therefore, 
\[
(c_1,c_3) = \left(t\left((\textbf{e}^{1}_{m+1},\textbf{e}^{2}_{m+1}),\ldots,(\textbf{e}^{1}_{n},\textbf{e}^{2}_{n})\right),t\left((\textbf{d}^{1}_{m+1},\textbf{d}^{2}_{m+1}),\ldots,(\textbf{d}^{1}_{n},\textbf{d}^{2}_{n})\right)\right) \in T''.
\] 
Since we can use the polynomial $t$ to shorten the sequence, it must be that $k=1$.

Finally, we can then take $s(\textbf{x}_1,\ldots,\textbf{x}_n) := h(\textbf{x}_1,\ldots,\textbf{x}_{m},u_1(\textbf{x}_{m+1},\ldots,\textbf{x}_{n}))$ which absorbs $(\textbf{a}_1,\ldots,\textbf{a}_{m},\textbf{e}_{m+1},\ldots,\textbf{e}_{m})$ to $a$. Then 
\[
(a,b)=(s(\textbf{a}_1,\ldots,\textbf{a}_{m},\textbf{e}_{m+1},\ldots,\textbf{e}_{m}),s(\textbf{b}_1,\ldots,\textbf{b}_{m},\textbf{d}_{m+1},\ldots,\textbf{d}_{m})) \in T'''.
\]

(2) That n-supernilpotence is preserved by passing to direct products and subalgebras follows from the fact that after adding appropriate constants, the centralizer condition defining the higher commutators is a quasi-equation for each polynomial. Preservation by homomorphisms follows from part (1b) above.

(3) The first statement follows by repeated application of the inequality $ \left[\theta,\ldots,\theta\right]_n$ $\geq [\theta,[\theta,\ldots,\theta]_{n-1}]$ from (1c). The second statement can then be argued as in Lemma \ref{lem:abelinterv} using (1a) and (1c) together.
\end{proof}

\begin{remark}
If in varieties with a weak difference term, we can prove the restricted form of (HC8) which is in Theorem \ref{thm:nweak}(1c), then the terms constructed in (\ref{eq:nilterms}) will be weak n-difference terms. This is one approach to establishing Conjecture \ref{conj:weakcomm}.
\end{remark}

The following corollary implies that when considering commutators of the form $[\theta,\ldots,\theta]_n$ the centralizer relation is determined by its restriction to n-ary polynomials; in particular, the generating set in Proposition \ref{cor:basis} can be taken over unary vectors. Using Lemma \ref{lem:central}, the argument can proceed in exactly the same manner as in \cite[Lem 5.2-5.4]{supernil} and so we omit the rather long calculation since no new insight is offered.

\begin{corollary}
Let $\mathcal V$ be a variety with weak m-difference terms for all $m \geq 2$, $\textbf{A} \in \mathcal V$ and $\theta, \gamma \in \Con \textbf{A}$. Then $\textbf{A} \vDash \mathrm{C}(\theta,\ldots,\theta; \gamma)$ if and only if the centralizer relation holds using only unary vectors and n-ary polynomials in the preamble.
\end{corollary}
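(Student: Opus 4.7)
The forward direction is immediate. For the converse, suppose that $\mathrm{C}(\theta,\ldots,\theta;\gamma)$ holds whenever the polynomial is $n$-ary and all vectors $\textbf{a}_i,\textbf{b}_i,\textbf{c},\textbf{d}$ are singletons, and consider an arbitrary polynomial $p(\textbf{x}_1,\ldots,\textbf{x}_n)$ with multi-ary vectors satisfying the premises of the centralizer relation. The plan is to deduce $p(\textbf{b}_1,\ldots,\textbf{b}_{n-1},\textbf{c})\equiv_\gamma p(\textbf{b}_1,\ldots,\textbf{b}_{n-1},\textbf{d})$ by a two-stage reduction to the unary, $n$-ary instance.

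The first stage peels vectors down to unary arguments. Since each $\textbf{a}_i\equiv_\theta \textbf{b}_i$ holds coordinatewise, one may replace a single coordinate of $\textbf{a}_i$ by the corresponding coordinate of $\textbf{b}_i$ at a time, freezing the remaining coordinates of all other vectors; this produces auxiliary polynomials with a single varying coordinate per vector whose centralizer premises are inherited from those of $p$, and chaining the corresponding conclusions reconstructs the original congruence. The second stage collapses arbitrary arity to arity $n$. Here one mimics Lemma \ref{lem:central} by forming the difference operator
\[
t(x_1,\ldots,x_n) := D^{n-1}_{w,(a_1,\ldots,a_{n-1})}\bigl(m(p(y_1,\ldots,y_{n-1},x_n),p(y_1,\ldots,y_{n-1},c),w)\bigr)(x_1,\ldots,x_{n-1}),
\]
where $m$ is chosen to be an iterated weak difference term $q_{K+1}$ from Example \ref{ex:nilterms} with $K$ large enough that $f(\theta) \leq [\theta,\ldots,\theta]_n$. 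The construction produces a genuinely $n$-ary polynomial that $f(\theta)$-absorbs $(a_1,\ldots,a_{n-1},c)$ on the corner $\{a_1,b_1\}\times\cdots\times\{a_{n-1},b_{n-1}\}\times\{c,d\}$, and whose value at $(b_1,\ldots,b_{n-1},d)$ captures the desired discrepancy modulo $f(\theta)$.

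Applying the restricted hypothesis to $t$ with the unary arguments $(a_1,\ldots,a_{n-1},c)$ and $(b_1,\ldots,b_{n-1},d)$ yields a pair in $\gamma$, and combining this with the $f(\theta)$-absorption from the construction produces
\[
p(\textbf{b}_1,\ldots,\textbf{b}_{n-1},\textbf{c})\ \ f(\theta)\circ\gamma\circ f(\theta)\ \ p(\textbf{b}_1,\ldots,\textbf{b}_{n-1},\textbf{d}).
\]
Because $f(\theta)\leq[\theta,\ldots,\theta]_n\leq\gamma$ by Theorem \ref{thm:ftermprop}(1), the composition collapses to $\gamma$, giving the conclusion. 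The whole argument is a transcription of \cite[Lem.~5.2--5.4]{supernil} with the Mal'cev term replaced by $q_{K+1}$ and the equalities relaxed to $f(\theta)$-congruences via Lemma \ref{lem:central}.

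The main obstacle is bookkeeping rather than conceptual: at every substitution step in the peeling stage one must verify that the centralizer premises for the auxiliary polynomial continue to hold, and in the collapsing stage one must confirm that the final polynomial has exactly $n$ variables with unary arguments while the absorbing pattern on the relevant corner survives. Coordinating the nesting depth of $q_{K+1}$ with the highest arity commutator present in the relevant evaluations, so that $f(\theta)\leq[\theta,\ldots,\theta]_n$ holds throughout, is where the hypothesis of weak $m$-difference terms \emph{for all} $m\geq2$ is genuinely required, and it is precisely this coordination that makes the transcribed calculation go through without modification.
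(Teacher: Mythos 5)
The proposal identifies the correct ingredients (the difference operator, weak $f$-terms from Example \ref{ex:nilterms}, Lemma \ref{lem:central}, and the transcription of \cite[Lem.~5.2--5.4]{supernil}), but it does not assemble them into a correct argument, and there are two specific gaps that need to be addressed.

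First, the final step of your argument is circular. You claim that ``$f(\theta)\leq[\theta,\ldots,\theta]_n\leq\gamma$ by Theorem \ref{thm:ftermprop}(1)'' and use this to collapse $f(\theta)\circ\gamma\circ f(\theta)$ to $\gamma$. But Theorem \ref{thm:ftermprop}(1) says $\textbf{A}\vDash\mathrm{C}(\theta,\ldots,\theta;\gamma)$ if and only if $[\theta,\ldots,\theta]_n\leq\gamma$; invoking it here to obtain $[\theta,\ldots,\theta]_n\leq\gamma$ requires the full centralizer relation, which is precisely the conclusion you are trying to reach. What must actually be established is that the \emph{restricted} centralizer already forces $[\theta,\ldots,\theta]_n\leq\gamma$, and this is where the real work lies: one has to show that the commutator is congruence-generated by the pairs arising from $n$-ary absorbing polynomials applied to scalar arguments (i.e., the generating set in Proposition \ref{cor:basis} taken over unary vectors), so that the restricted hypothesis, applied to those absorbing polynomials, puts the whole generating set into $\gamma$. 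This is exactly the content of the reduction in \cite[Lem.~5.2--5.4]{supernil} that the paper defers, and your proposal gestures at it without supplying it.

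Second, your ``first stage'' claims that when you replace a single coordinate of $\textbf{a}_i$ by the corresponding coordinate of $\textbf{b}_i$ at a time, the auxiliary polynomials inherit the centralizer premises of $p$. They do not: the premise of the centralizer relation supplies $\gamma$-equivalences only at evaluations drawn from the cube $\{\textbf{a}_1,\textbf{b}_1\}\times\cdots\times\{\textbf{a}_{n-1},\textbf{b}_{n-1}\}$ (minus one vertex), and the mixed intermediate tuples occurring in a coordinate-by-coordinate peel are generally not on that cube. The way one actually bypasses this is to first pass to the absorbing polynomial $t$ built with the difference operator (as in the proof of Lemma \ref{lem:central}); absorption then supplies the needed side evaluations modulo $f(\theta)$, and only after that can one safely reduce arities. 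In your write-up this is folded into a ``second stage,'' with the stages effectively in the wrong order and with a notational mismatch (the displayed $t$ is $n$-ary with scalar $a_i$, yet it is supposed to arise from a $p$ with multi-ary blocks; the difference operator preserves arity, so it cannot ``collapse arbitrary arity to arity $n$'').

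In short: the tools you name are the right ones and the plan of transcribing \cite[Lem.~5.2--5.4]{supernil} with $q_{K+1}$ in place of the Mal'cev term is sound, but the proposal as written does not supply the reduction that makes the restricted centralizer imply $[\theta,\ldots,\theta]_n\leq\gamma$, and it asserts an unjustified inheritance of premises in the peeling step. Both must be repaired for the argument to stand.
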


\begin{acknowledgements}
I would like to thank Andrew Moorhead and Jakub Opr\v{s}al for intriguing and enthusiastic discussions about the higher commutator during the Vanderbilt Workshop on Structure and Complexity in Universal Algebra held September 19 - 30, 2016 in Nashville, TN. The author was supported in part by National Natural Science Foundation of China Research Fund for International Young Scientists \#11650110429
\end{acknowledgements}

\end{document}